\title[Rolling spheres and the Willmore energy]{Rolling spheres and the Willmore energy}
\author{Felix}{Kn{\"o}ppel}{}{Berlin}
\author{Ulrich}{Pinkall}{}{Berlin}
\author{Peter}{Schr{\"o}der}{}{Pasadena}
\author{Yousuf}{Soliman}{}{Pasadena}
\theoremstyle{plain}
 \newtheorem{theorem}{Theorem}[section]
 \newtheorem{definition}[theorem]{Definition}%
 \newtheorem{remark}[theorem]{Remark}
 \newtheorem{proposition}[theorem]{Proposition}%
 \newtheorem{lemma}[theorem]{Lemma}%
 \newtheorem{example}[theorem]{Example}%
\newcommand{\eg}{\emph{e.g.}} 
\newcommand{\ie}{\emph{i.e.}} 
\renewcommand{\eqref}[1]{Equation~\ref{eq:#1}}
\newcommand{\figref}[1]{Figure~\ref{fig:#1}}
\newcommand{\secref}[1]{Section~\ref{sec:#1}}
\newcommand{\appref}[1]{Appendix~\ref{app:#1}}
\newcommand{\thmref}[1]{Theorem~\ref{thm:#1}}
\newcommand{\lemref}[1]{Lemma~\ref{lem:#1}}
\newcommand{\prpref}[1]{Proposition~\ref{prp:#1}}
\newcommand{\figloc}[1]{\emph{#1}}
\def\RR{\mathbb{R}} 
\def\HH{\mathbb{H}} 
\newcommand{\Esf}{\mathsf{E}}
\newcommand{\Fsf}{\mathsf{F}}
\newcommand{\Msf}{\mathsf{M}}
\newcommand{\Vsf}{\mathsf{V}}
\renewcommand{\ij}{i\mkern-1.0mu j}
  \newcommand{\jk}{jk}
  \newcommand{\ki}{ki}
  \newcommand{\ijk}{ijk}
  \newcommand{\jil}{jil}
\newcommand{\mesh}{\Msf}
\newcommand{\vertices}{\Vsf}
\newcommand{\edges}{\Esf}
\newcommand{\faces}{\Fsf}
\newcommand{\simplicialsurface}{\mesh=(\vertices,\edges,\faces)}
\newcommand{\fpos}{\mathtt{f}}
\newcommand{\Sij}{\mathtt{S}}
\newcommand{\Cijk}{\mathtt{C}}
\DeclareMathOperator{\End}{End}
\newcommand{\df}{{d\mkern-1mu f}}
\newcommand{\GL}{\mathrm{GL}}
\newcommand{\Sp}{\mathrm{Sp}(1,1)}
\let\sp=\relax
\newcommand{\sp}{\mathfrak{sp}(1,\mkern-1mu 1)}
\newcommand{\HP}{\HH\mathsf{P}}
\DeclareMathOperator{\tr}{tr}
\let\Re=\relax
\DeclareMathOperator{\Re}{Re}
\let\Im=\relax
\DeclareMathOperator{\Im}{Im}
\newcommand{\LightCone}{\mathcal{L}}
\newcommand{\Mob}{{\operatorname{M\ddot{o}b}}}
\newcommand{\Spheres}{\mathcal{S}}
\newcommand{\Circles}{\mathcal{C}}
\newcommand{\PointPairs}{\mathcal{P}}
\newcommand{\Cotangent}{\mathcal{V}}
\newcommand{\Willmore}{\mathcal{W}}
\DeclareFontFamily{OMX}{MnSymbolE}{}
\DeclareSymbolFont{MnLargeSymbols}{OMX}{MnSymbolE}{m}{n}
\DeclareFontShape{OMX}{MnSymbolE}{m}{n}{%
    <-6>  MnSymbolE5
   <6-7>  MnSymbolE6
   <7-8>  MnSymbolE7
   <8-9>  MnSymbolE8
   <9-10> MnSymbolE9
  <10-12> MnSymbolE10
  <12->   MnSymbolE12
}{}
\DeclareFontShape{OMX}{MnSymbolE}{b}{n}{%
    <-6>  MnSymbolE-Bold5
   <6-7>  MnSymbolE-Bold6
   <7-8>  MnSymbolE-Bold7
   <8-9>  MnSymbolE-Bold8
   <9-10> MnSymbolE-Bold9
  <10-12> MnSymbolE-Bold10
  <12->   MnSymbolE-Bold12
}{}
\let\llangle\@undefined
\let\rrangle\@undefined
\let\hmgcoord\@undefined
\DeclareMathDelimiter{\llangle}{\mathopen}%
                     {MnLargeSymbols}{'164}{MnLargeSymbols}{'164}
\DeclareMathDelimiter{\rrangle}{\mathclose}%
                     {MnLargeSymbols}{'171}{MnLargeSymbols}{'171}
\DeclarePairedDelimiterX{\sang}[1]{\langle}{\rangle}{#1}
\DeclarePairedDelimiterX{\dang}[1]{\llangle}{\rrangle}{#1}
\DeclarePairedDelimiterX\braket[2]{\langle}{\rangle}{#1\, \delimsize\vert\, #2}
\newcommand{\hmgpoint}[1]{\mathsf{#1}}
\newcommand{\hmgmatrix}[1]{\mathsf{#1}}
\newcommand{\hmgcoord}[2]{\begin{pmatrix}#1 \\ #2\end{pmatrix}\HH}
\newcommand{\hmgcoords}[2]{\begin{psmallmatrix}#1 \\ #2\end{psmallmatrix}\HH}
\begin{document}

\begin{abstract}
The Willmore energy plays a central role in the conformal
  geometry of surfaces in the conformal 3-sphere \(S^3\). It also arises as the
  leading term in variational problems ranging from black holes,
  to elasticity, and cell biology. In the computational setting a
  discrete version of the Willmore energy is desired. Ideally it
  should have the same symmetries as the smooth formulation. Such a
  M{\"o}bius invariant discrete Willmore energy for simplicial
  surfaces was introduced by Bobenko.

  In the present paper we provide a new geometric interpretation of
  the discrete energy as the curvature of a rolling spheres connection
  in analogy to the smooth setting where the curvature of a connection
  induced by the mean curvature sphere congruence gives the Willmore
  integrand. We also show that the use of a particular projective
  quaternionic representation of all relevant quantities gives clear
  geometric interpretations which are manifestly M{\"o}bius invariant.
\end{abstract}




\section{Introduction}\label{sec:intro}

The Willmore energy is the most well-studied example of a surface
functional that is invariant under M{\"o}bius transformations. For an
immersion \(f : M\to\RR^3\) of a smooth two-dimensional manifold, the
Willmore energy is defined as
\begin{equation}
  \mathcal{W} = \int_{M} (H^2 - K)\,\sigma_f.
\end{equation}
Here \(H = \tfrac12(\kappa_1+\kappa_2)\) is the mean curvature,
\(K=\kappa_1\kappa_2\) the Gaussian curvature, \(\sigma_{f}\) the induced
volume form, and \(\kappa_1\) and \(\kappa_2\) are the principal
curvatures of the immersion, resulting in a M{\"o}bius invariant
integrand. Recall that the group of M{\"o}bius transformations of
\(S^3\) consists of the diffeomorphisms of \(S^3\) carrying round 2-spheres into round
2-spheres. By adding a point at \(\infty\) we can consider \(\RR^3\)
as a conformal submanifold of the one-point compactification
\(S^3=\RR^3\cup\{\infty\}\) and treat M{\"o}bius transformations as
conformal diffeomorphisms of \(\RR^3\cup\{\infty\}\). The group of
M{\"o}bius transformations of \(\RR^3\) fixing \(\infty\) is generated by the orthogonal transformations (isometries fixing the
origin), homeotheties (\(p\in\RR^3\mapsto \lambda p\),
\(\lambda > 0\)), and translations. Also appending inversion in the
unit sphere to this list, the set of transformations generates
the full group of M{\"o}bius transformations of \(S^3\).

\begin{figure}[t]
  \centering
  \includegraphics[width=\columnwidth]{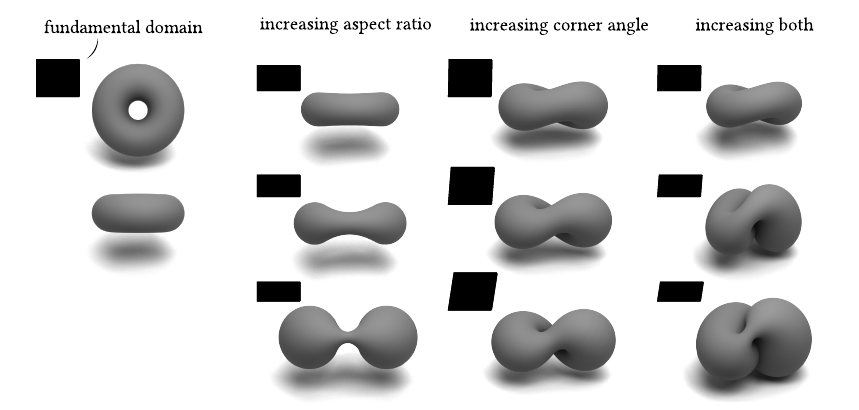}
  \caption{The discrete formulation of the Willmore energy enables the computation of (constrained) critical points via numerical minimization. The discrete constrained Willmore tori visualized here provide realizations of points in Teichm{\"u}ller space as discrete surfaces in \(S^3\).}
  \label{fig:TorusTeichmuller}
\end{figure}

In the smooth setting the Willmore energy, due to its invariance under
M{\"o}bius transformation of \(S^3\), plays a central role in
conformal geometry and has stimulated many interesting research
directions~\cite{Li:1982:NCI,Bohle:2008:CWS,Quintino:2021:CWS}. In
2012, Marques and Neves used the Almgren-Pitts min-max theory
to resolve the celebrated Willmore conjecture stating that, up to
M{\"o}bius transformation, the Clifford torus minimizes the Willmore
energy among immersed tori in \(\RR^3\)~\cite{Marques:2014:MMT}. The Willmore energy also
arises in a variety of scientific domains. In cell biology,
it models
the geometry and locomotion of lipid
bilayers~\cite{Lipowsky:1991:CM,Canham:1970:MEB,Helfrich:1973:EPL,Evans:1974:BRC,Evans:2010:SJV}. In
general relativity, it
  appears as the leading term of the Hawking
mass~\cite{Hawking:1968:GRE,Koerber:2021:APW}. In nonlinear
elasticity, it measures the
bending energy of thin plates~\cite{Friesecke:2002:TGR}. The Willmore
energy is also popular in computational geometry and computer graphics
due to the regularizing effects of its gradient
flow~\cite{Bobenko:2005:DWF,Soliman:2021:CWS,Rumpf:2004:LSF,Riviere:2008:AAW}. Constrained
Willmore surfaces also appear in a
theory that encapsulates minimal surfaces, CMC surfaces, and Willmore
surfaces~\cite{Quintino:2021:CWS}---see \figref{TorusTeichmuller} for numerical examples of constrained Willmore tori computed following the approach in \cite{Soliman:2021:CWS}. As the importance of the Willmore energy is
evidenced in the context of differential geometry and physics, it is
desirable to have a rich theory of discrete Willmore surfaces.

\paragraph{Discrete Setup}
We study the discrete differential geometry of the M{\"o}bius
invariant discretization of the Willmore energy on a simplicial
surface
\(\simplicialsurface\)~\cite{Bobenko:2005:CES,Bobenko:2005:DWF},
where \(\vertices\) denotes the set of vertices \(i\), \(\edges\) the set of
oriented edges \(\ij\), and \(\faces\) the set of
oriented triangles \(\ijk\). An
immersion into \(S^3\) is given by vertex
positions \(\fpos_i\in S^3\) for \(i\in \vertices\).  The discrete energy \(\Willmore\) then is
defined in terms of the intersection angles \( \beta_{\ij} \) between
the circumcircles \(\Cijk_{\ijk}\) and \(\Cijk_{\jil}\) of adjacent faces
\(ijk\) and \(jil\) and using the negative angle defect to measure the
planarity of each vertex star after sending \(\fpos_i\) to infinity with a
M{\"o}bius transformation:
\begin{equation}
    \Willmore \coloneqq \tfrac12\sum_{i\in\vertices}\Willmore_i,\qquad  \Willmore_i := \sum_{\ij}\beta_{\ij} - 2\pi.
    \label{eq:DiscreteWillmore}
\end{equation}
\begin{figure}[!h]
    \centering
    \begin{tikzpicture}
        \node[anchor=south west,inner sep=0] at (0,0) {\includegraphics{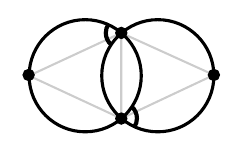}};
        \node[anchor=south west,inner sep=0] at (-.05,1.1) {\(\fpos_l\)};
        \node[anchor=south west,inner sep=0] at (3.8,1.1) {\(\fpos_k\)};
        \node[anchor=south west,inner sep=0] at (1.9,2.25) {\(\fpos_i\)};
        \node[anchor=south west,inner sep=0] at (1.9,-.05) {\(\fpos_j\)};
    \end{tikzpicture}
\end{figure}

Since the discrete energy is defined using only the angles between
circles it is M{\"o}bius invariant.  Furthermore, it is non-negative and
\( \Willmore_i + K_i \geq 0 \) where \(K_i\) is the usual
discrete Gaussian curvature (angle defect). Finally it vanishes
when the vertex star is convex and \(\fpos_i\) and all its
neighbors lie on a common sphere.  These properties of \(\Willmore_i\) mirror the
smooth setting in that \( (H^2-K)\,\sigma_f \) is non-negative, M{\"o}bius invariant, and measures
infinitesimal sphericality (\( \kappa_1 = \kappa_2 \)).  Together, these
properties justify calling it the discrete Willmore energy.  Recently,
\(\Gamma\)-convergence of the functional has been established with
respect to weak-\(*\) convergence in \(W^{1,\infty}\) as graphs of the
piecewise linear surface to the smooth
surface~\cite{Gladbach:2023:AWE}. Consistency of the local
approximation of the energy is also known for triangulations aligned
to principal curvature directions~\cite{Bobenko:2008:SfC}.

\paragraph*{Rolling Mean Curvature Spheres: Smooth and Discrete}
To express the Willmore energy in a manifestly M{\"o}bius invariant way, one introduces the \textbf{conformal Gauss map}, also known as the mean curvature sphere congruence:
\begin{equation}
    S : M \to \Spheres{}\subset\RR^{4,1},\qquad \text{satisfying }T_{f(p)}S_p = df(T_{p}M)\text{ and }H_{S_p} = H_{p}, \label{eq:ConformalGaussMap}
\end{equation}
where \(\Spheres{}\) is the Lorentzian space of 2-spheres in \(S^3\) (see \secref{SpaceOfSpheres}), \(p\in M\), \(H_{S_p}\) the mean curvature of the sphere \(S_p\), and \(H_{p}\) the mean curvature of the immersion \(f\) at \(p\)~\cite{Blaschke:1929:VUD,Bryant:1984:DTW,Burstall:2010:CSG}. The conformal Gauss map is M{\"o}bius invariant, even though the mean curvature itself is not. A classical result due to Blaschke is that the Willmore energy is equal to the area of the conformal Gauss map~\cite{Blaschke:1929:VUD}. A modern treatment of conformal submanifold geometry using the machinery of Cartan geometries was given in~\cite{Sharpe:2000:DGC,Burstall:2010:CSG}, and Sharpe showed that the Willmore energy can be realized as the curvature of a Cartan geometry obtained by restricting the flat M{\"o}bius structure of \(S^3\) to an immersed surface~\cite{Sharpe:2000:DGC}.


To elucidate the differential geometric interpretation of the discrete
energy, we first develop a related geometric interpretation of
the Willmore energy in the smooth setting.  The main idea is that the
Willmore integrand can be realized as the curvature of a rolling
spheres connection in the same way as the Gauss curvature form can be
realized as the curvature of the Levi-Civita
connection---geometrically, the Levi-Civita connection describes how
to roll tangent planes over the surface without slipping or
twisting. Extrinsically, the process of rolling tangent planes is
characterized by the trajectories of the induced
parallel transport being orthogonal to the tangent planes themselves. By
following the parallel transport around a closed loop one obtains an
affine map of the tangent plane onto itself. The curvature of the
connection at a point \(p\) is the affine map of \(T_pM\) onto itself
obtained as the limit of following the parallel transport around
infinitesimal loops based at \(p\). As the Levi-Civita connection is
torsion free, the curvature fixes the point \(p\) and on the tangent
space \(df(T_pM)\) acts by a rotation around the normal with an angle
given by the Gauss curvature of the surface.

To obtain a M{\"o}bius geometric generalization, we replace the
tangent plane congruence with an arbitrary tangent sphere congruence
and replace the Levi-Civita connection with a M{\"o}bius connection
whose parallel transport trajectories are orthogonal to the sphere
congruence. This orthogonal trajectories property, visualized in
\figref{Tractrix}, justifies calling the connection a rolling
spheres connection since the trajectories follow the paths that one
would get by rolling a sphere over the surface in M{\"o}bius three-space. In the
special case when the sphere congruence is given by the M{\"o}bius
invariant mean curvature sphere congruence, we find that the Willmore
energy arises as the rotational component of the curvature form of the
connection obtained by rolling the mean curvature spheres over the
surface.  Our main result about the discrete energy provides an
analogous geometric interpretation of it: we prove that the discrete
Willmore energy can be computed from the curvature of a M{\"o}bius
connection obtained by rolling the circumspheres from one edge to the
next around a vertex.

\begin{figure}[h]
  \includegraphics[width=\columnwidth]{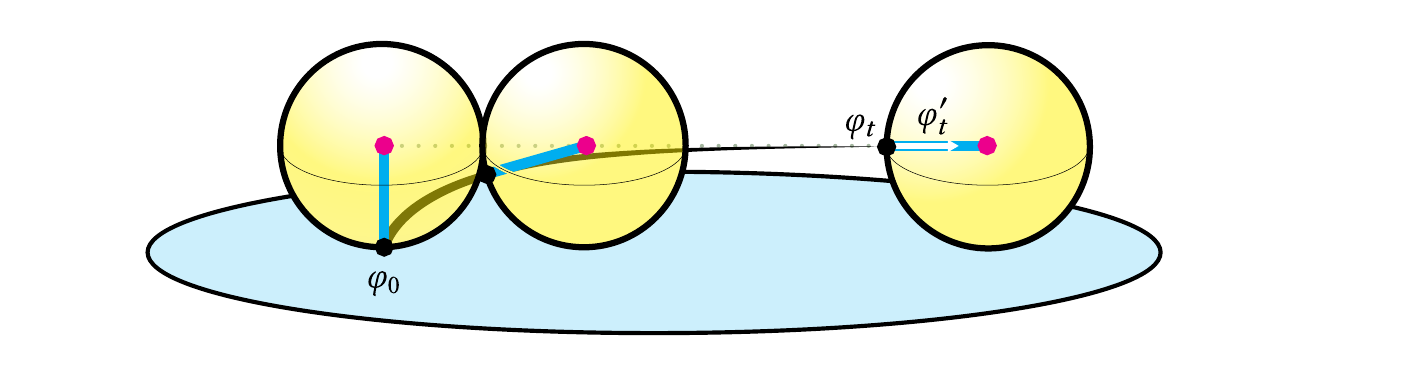}
  \caption{M{\"o}bius geometric rolling of a sphere congruence traces out trajectories that intersect the spheres orthogonally. The tractrix curve visualized above is obtained by rolling a sphere of constant radius over a flat plane.}
  \label{fig:Tractrix}
\end{figure}

\paragraph{Quaternionic Projective Geometry and \(S^3\)}
In \secref{MobiusGeometryWQuaternions} we review the basics of the quaternionic projective (\ie, M{\"o}bius) geometry of \(S^4\) as presented in~\cite{Burstall:2004:CGS} and specialize the situation to \(S^3\) by restricting the group of quaternionic projective transformations of \(S^4\) to those which preserve a fixed M{\"o}bius 3-sphere, \(S^3\), in \(S^4\). 
We then realize spaces of geometric objects associated to this quaternionic projective geometry (\eg, the spaces of oriented 2-spheres, circles, and point pairs in \(S^3\)) as spaces of quaternionic matrices.
We define an algebraic structure, resembling vector calculus in
\(\RR^3\), on the space of \(p\)-spheres in \(S^3\) for
\(p\in\{0,1,2\}\). The vector calculus of \(p\)-spheres involves a cross
product and a dot product that act on the algebraic representations of
oriented \(p\)-spheres in \(S^3\). The
cross-product of two intersecting circles, for example, produces the sine of their intersection angle multiplying the unique circle orthogonal to both circles. On the other hand, their dot product produces the cosine of the intersection angle. In \secref{RollingSphereConnections} we use this quaternionic formalism to describe the geometry of rolling spheres in both the smooth and discrete settings using quaternionic connections.
The quaternionic formalism we present here should be of independent
geometric interest---our experience is that it is
geometrically meaningful as well as quite efficient and easily implemented on a
computer for the algorithmic manipulation of spheres and circles. 

Recently, there have been a several authors who have studied fundamental properties of quaternionic M{\"o}bius transformations. Fixed points and conjugacy classes of M{\"o}bius transformations of \(S^3\) have been characterized in~\cite{Jakobs:2010:MTR,Bisi:2009:MTP}. Quaternionic holomorphic geometry provides an elegant description of surfaces in 3-space and 4-space that has been especially fruitful in the study of Willmore surfaces~\cite{Heller:2014:ECW,Burstall:2004:CGS}. In this context the quaternionic realization of the space of oriented 2-spheres plays a central role.

\section{M{\"o}bius Geometry of \(S^3\) with Quaternions}
\label{sec:MobiusGeometryWQuaternions}
It is natural to study the discrete Willmore energy in the M{\"o}bius
geometry of \(S^3\) for which we will employ a quaternionic projective
model. We begin with a review of the quaternionic projective geometry of \(\HP^1\) and by restricting the group of M{\"o}bius transformations to those fixing a three-sphere \(S^3\) inside of \(S^4\) we obtain a quaternionic projective model of the M{\"o}bius geometry of \(S^3\). 

\subsection{M{\"o}bius transformations preserving \(S^3\)}
The quaternionic projective model of \(S^4\) is based on the
observation that \(\HP^1\) is conformally equivalent to
\(S^4\)~\cite[Section 3.1]{Burstall:2004:CGS}. The group
\(\GL(2;\HH)\) acts on \(\HP^1\) by \(\hmgmatrix{A}(\psi\HH) = (\hmgmatrix{A}\psi)\HH\),
\(\hmgmatrix{A}\in\GL(2;\HH)\), and acts by M{\"o}bius transformations on
\(S^4\). Moreover, all orientation preserving conformal
differeomorphisms are realized this way and the representation of a
M{\"o}bius transformation is unique up to a real
scale~\cite{Kulkarni:1988:CG}.

A three-sphere inside of \(S^4\) is determined by the isotropic lines
of an indefinite Hermitian inner product on \(\HH^2\)~\cite[Chapter
10]{Burstall:2004:CGS}. We fix the indefinite Hermitian inner
product
\begin{equation}
  \sang{\begin{pmatrix}\psi_0\\\psi_1\end{pmatrix},\begin{pmatrix}\varphi_0\\
      \varphi_1\end{pmatrix}}= \bar{\psi}_0\varphi_1 + \bar{\psi}_1\varphi_0.
\end{equation}
The isotropic lines form a conformal model of \(S^3\): 
\begin{align} 
  S^3 & \coloneqq \big\{ \psi\HH\in \HP^1 \,\,\mid\,\, \sang{\psi,\psi} = 0\big\} \\
  & = \Big\{\hmgcoord{p}{1}\,\,\mid\,\, p\in\RR^3\cong\Im\HH\Big\}\cup\Big\{\infty\Big\}.
\end{align}
A unitary transformation \(\hmgmatrix{A}\) is an
automorphism of \((\HH^{2},\sang{\cdot,\,\cdot})\), \ie{} an
invertible linear map \(\hmgmatrix{A}\in\GL(2;\HH)\) satisfying
\(\sang{\hmgmatrix{A}\psi,\hmgmatrix{A}\varphi} =
\sang{\psi,\varphi}\) for all \(\psi,\varphi\in\HH^{2}\), and we denote
the group of all such transformations as \(\Sp\). They satisfy
\(\hmgmatrix{A}^*\hmgmatrix{A} = \hmgmatrix{I}\) where
\(\hmgmatrix{I}\) is the identity matrix and \(\hmgmatrix{A}^*\) the
adjoint with respect to the indefinite Hermitian form
\begin{equation} 
  \hmgmatrix{A}^*  = \begin{pmatrix}
    \bar{d} & \bar{b} \\ \bar{c} & \bar{a}\end{pmatrix}\quad \text{for}\quad \hmgmatrix{A} = \begin{pmatrix}a & b \\ c & d\end{pmatrix}.
\end{equation}
Evidently the action of \(\Sp\)
preserves \(S^3\) and since the restriction of a conformal map of
\(S^4\) to \(S^3\) is still conformal, we deduce that elements of
\(\Sp\) can be identified with orientation preserving
M{\"o}bius transformations of \(S^3\).

These transformations can be written in terms of simpler, geometrically
meaningful components associated with the generators of the group of
M{\"o}bius transformations (see \appref{SpDecomp} for a proof).
\begin{proposition}
  Let \(\hmgmatrix{A}\in\Sp\). Then there exists unique \(x,y\in\RR^3\) and \(\mu\in\HH\) satisfying 
  \begin{equation}
  \hmgmatrix{A} = \begin{pmatrix} 1 & 0 \\ y & 1\end{pmatrix} \begin{pmatrix}\mu & 0 \\ 0 & \bar{\mu}^{-1}\end{pmatrix} \begin{pmatrix} 1 & x \\ 0 & 1 \end{pmatrix}.
  \end{equation}
  \label{prp:SpDecomp}
\end{proposition}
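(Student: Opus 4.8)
The plan is to treat the asserted factorization as a Gauss (Bruhat) decomposition and verify it by a direct entry-matching computation, the only genuine subtleties being the reality constraints on \(x,y\) and the domain of validity. First I would record that each of the three factors actually lies in \(\Sp\): checking \(\hmgmatrix{A}^*\hmgmatrix{A}=\hmgmatrix{I}\) shows that \(\begin{psmallmatrix}1 & x\\0&1\end{psmallmatrix}\) and \(\begin{psmallmatrix}1&0\\y&1\end{psmallmatrix}\) are unitary \emph{precisely} when \(x,y\in\Im\HH\) (the off-diagonal unitarity condition forces \(x+\bar x=0\), resp. \(y+\bar y=0\)), while \(\begin{psmallmatrix}\mu&0\\0&\bar\mu^{-1}\end{psmallmatrix}\) is unitary for every \(\mu\in\HH\setminus\{0\}\). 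Multiplying the three factors out gives
\[
\begin{pmatrix}1&0\\y&1\end{pmatrix}\begin{pmatrix}\mu&0\\0&\bar\mu^{-1}\end{pmatrix}\begin{pmatrix}1&x\\0&1\end{pmatrix}=\begin{pmatrix}\mu & \mu x\\ y\mu & y\mu x+\bar\mu^{-1}\end{pmatrix},
\]
so matching entries with \(\hmgmatrix{A}=\begin{psmallmatrix}a&b\\c&d\end{psmallmatrix}\) forces \(\mu=a\), and then \(x=a^{-1}b\), \(y=ca^{-1}\). This already settles uniqueness.

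Existence then reduces to checking that this forced candidate is consistent, which is where unitarity of \(\hmgmatrix{A}\) is used (I would exploit both \(\hmgmatrix{A}^*\hmgmatrix{A}=\hmgmatrix{I}\) and \(\hmgmatrix{A}\hmgmatrix{A}^*=\hmgmatrix{I}\), since \(\hmgmatrix{A}^*=\hmgmatrix{A}^{-1}\)). From \(\hmgmatrix{A}\hmgmatrix{A}^*=\hmgmatrix{I}\) one gets \(a\bar b+b\bar a=0\) and from \(\hmgmatrix{A}^*\hmgmatrix{A}=\hmgmatrix{I}\) one gets \(\bar c a+\bar a c=0\); since the real part is cyclic one has \(\Re(a^{-1}b)=\Re(\bar a b)/|a|^2=0\) and \(\Re(ca^{-1})=\Re(c\bar a)/|a|^2=0\), so indeed \(x,y\in\Im\HH\cong\RR^3\). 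For the last entry I would right-multiply \(\bar c a+\bar a c=0\) by \(a^{-1}\) to obtain \(\bar c+\bar a c a^{-1}=0\), hence \(\bar c b+\bar a c a^{-1}b=0\); subtracting this from the relation \(\bar c b+\bar a d=1\) gives \(\bar a(d-ca^{-1}b)=1\), i.e.\ \(d=ca^{-1}b+\bar a^{-1}=y\mu x+\bar\mu^{-1}\), matching the \((2,2)\) entry. Geometrically the three factors are a special conformal transformation, a rotation–dilation \(p\mapsto\mu p\bar\mu\), and a translation, so this is precisely the Gauss decomposition \(N^-AN^+\) of \(\Sp\) into the subgroups attached to the M{\"o}bius generators discussed in the introduction.

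The step I expect to be the main obstacle is not any individual computation but the \emph{domain of validity}: matching the \((1,1)\) entry forces \(\mu=a\), and the diagonal factor requires \(\mu\neq0\), so the factorization exists exactly on the big cell \(\{a\neq0\}=\{\hmgmatrix{A}:\hmgmatrix{A}(\infty)\neq 0\}\). Elements with \(a=0\)—for example the inversion \(\begin{psmallmatrix}0&1\\1&0\end{psmallmatrix}\in\Sp\)—admit no such decomposition, so the statement is to be read on this dense open cell (with the complementary stratum reachable by precomposing with a fixed inversion). Verifying that \(a\neq0\) is both necessary and the \emph{only} obstruction, while keeping the quaternionic non-commutativity straight throughout the reality computations, is the part that genuinely needs care; everything else is a routine matrix multiplication.
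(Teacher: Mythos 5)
Your computation is correct and in one respect more careful than the paper's own argument. The paper proves existence by a two-case reduction: if \(\hmgmatrix{A}\) fixes \(\infty\) (i.e.\ \(c=0\)) it reads off \(\mu=a\), \(d=\bar a^{-1}\), \(x=\bar d b\); otherwise it left-multiplies by \(\begin{psmallmatrix}1&0\\-p_\infty^{-1}&1\end{psmallmatrix}\) with \(p_\infty=\hmgmatrix{A}(\infty)\) to reduce to the first case. Your route---multiplying out the three factors and matching entries to force \(\mu=a\), \(x=a^{-1}b\), \(y=ca^{-1}\)---arrives at the same formulas but buys two things the appendix proof leaves implicit: uniqueness, which is immediate from the forced expressions and is never actually argued in the paper, and a precise identification of where the decomposition can fail. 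Your reality checks \(\Re(a^{-1}b)=\Re(ca^{-1})=0\) and the derivation of the \((2,2)\) entry from \(\bar c b+\bar a d=1\) are all correct.

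Your observation about the domain of validity is not a pedantic one: the proposition as literally stated is false, and the paper's proof shares the gap. The second case of the appendix proof inverts \(p_\infty\), which silently assumes \(\hmgmatrix{A}(\infty)\neq 0\), i.e.\ \(a\neq 0\). Your example \(\begin{psmallmatrix}0&1\\1&0\end{psmallmatrix}\) does lie in \(\Sp\) (one checks \(\hmgmatrix{A}^*\hmgmatrix{A}=\hmgmatrix{I}\); it represents \(p\mapsto -p/|p|^2\), an orientation-preserving transformation sending \(\infty\) to \(0\)) and admits no factorization of the stated form, since the \((1,1)\) entry of any such product is \(\mu\) and \(\mu\) must be invertible. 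So the statement needs the hypothesis \(a\neq 0\) (equivalently \(\hmgmatrix{A}(\infty)\neq 0\)), or must be read as a statement about the dense open cell, exactly as you say.
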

The factors have straightforward geometric
interpretations. For \(x\in\RR^3\) the matrix
\begin{equation}
\hmgmatrix{T}_x \coloneqq \begin{pmatrix}1 & x \\ 0 & 1\end{pmatrix},\qquad \hmgmatrix{T}_x \begin{pmatrix}p \\ 1\end{pmatrix}\HH = \begin{pmatrix}p + x \\ 1\end{pmatrix}\HH
\end{equation}
describes the translation by \(x\). For \(\mu\in\HH\) the matrix 
\begin{equation}
\hmgmatrix{R}_{\mu} \coloneqq \begin{pmatrix}\mu & 0 \\ 0 & \bar{\mu}^{-1}\end{pmatrix},\qquad \hmgmatrix{R}_\mu\begin{pmatrix}p \\ 1\end{pmatrix}\HH = \begin{pmatrix}\mu\,p\,\bar{\mu} \\ 1 \end{pmatrix}\HH
\end{equation}
describes the stretch rotation given by conjugation with \(\mu\). Lastly, for \(y\in\RR^3\) the matrix 
\begin{equation}
\hat{\hmgmatrix{T}}_y \coloneqq \begin{pmatrix} 1 & 0 \\ y & 1 \end{pmatrix},\qquad \hat{\hmgmatrix{T}}_y \begin{pmatrix}p \\ 1\end{pmatrix}\HH = \begin{pmatrix}(p^{-1} + y)^{-1} \\ 1 \end{pmatrix}\HH
\end{equation}
describes the inversion in the unit sphere, composed with a
translation by \(-y\), followed by another inversion in the unit
sphere. Analogous to how a translation shifts the origin, the
transformation \(\hat{\hmgmatrix{T}}_y\) shifts the infinity point.

\begin{remark}
    Each of the terms, \(\hat{\hmgmatrix{T}}_y\), \(\hmgmatrix{R}_\mu\),
    \(\hmgmatrix{T}_x\), in \prpref{SpDecomp}
    describes an orientation preserving M{\"o}bius transformations of
    \(S^3\), hence the group \(\Sp\) contains only
    orientation preserving M{\"o}bius transformations of \(S^3\). Specifically, inversion in a 2-sphere is
    not an orientation preserving M{\"o}bius transformation and thus not
    representable by an element of \(\Sp\).
\end{remark}

To obtain all M{\"o}bius transformations of \(S^3\) one needs to consider the larger group
\begin{equation}
  \Mob(3) \coloneqq \Big\{ \hmgmatrix{A}\in\HH^{2\times 2}~\mid~\hmgmatrix{A}^*\hmgmatrix{A} = \pm\hmgmatrix{I}\Big\}.
\end{equation}
The orientation reversing M{\"o}bius transformations of \(S^3\) are those elements satisfying \(\hmgmatrix{A}^*\hmgmatrix{A} =
-\hmgmatrix{I}\).  

\begin{remark}
    Both \(\Sp\) and \(\Mob(3)\) are double covers of the group of orientation preserving and all M{\"o}bius transformations of \(S^3\), respectively, and so one should consider the quotients of these groups by the order two subgroup \(\{+\hmgmatrix{I},-\hmgmatrix{I}\}\) if working with the double cover directly is not desired.
\end{remark}

The group \(\Mob(3)\) has two connected components with \(\Sp\) being
path connected to the identity. Therefore, the Lie algebra of
\(\Mob(3)\) is equal to the Lie algebra of \(\Sp\) denoted
\begin{equation}
  \sp = \big\{\hmgmatrix{Y}\in\HH^{2\times 2}\,\,\mid\,\,\hmgmatrix{Y}^* + \hmgmatrix{Y} = 0\big\}.
\end{equation}
Elements of the Lie algebra \(\hmgmatrix{Y}\in\sp\) can be integrated into a 1-parameter family of M{\"o}bius transformations
\begin{equation}
  t \mapsto \exp(t \hmgmatrix{Y})
\end{equation} using the exponential map 
\begin{equation}
  \exp:\sp\to\Sp\qquad \exp \hmgmatrix{A}\coloneqq \sum_{n=0}^{\infty}\frac{\hmgmatrix{A}^n}{n!}.
\end{equation}
Elements of the Lie algebra describe infinitesimal M{\"o}bius transformations of \(S^3\). The vector field associated with \(\hmgmatrix{Y}\in\sp\) is given by
\begin{equation}
  \mathsf{V}_{\hmgmatrix{Y}}(\hmgpoint{p}) = \left.\frac{d}{dt}\right|_{t=0}\exp(t\hmgmatrix{Y})\cdot\hmgpoint{p}.
\end{equation}
In this way \(\sp\) is identified with the algebra of M{\"o}bius vector fields of \(S^3\).

Two important operations on matrices are the inner product on
\(\HH^{2\times 2}\):
\begin{equation}
  \langle\hmgmatrix{A},\hmgmatrix{B}\rangle = -\tfrac12\tr_{\RR}(\hmgmatrix{AB}).
\end{equation} 
and the cross
product on \(\HH^{2\times 2}\) defined via the commutator:
\begin{equation}
  \hmgmatrix{A}\times\hmgmatrix{B}\coloneqq \tfrac12[\hmgmatrix{A},\hmgmatrix{B}] = \tfrac12(\hmgmatrix{AB} - \hmgmatrix{BA}),
\end{equation}
for \(\hmgmatrix{A},\hmgmatrix{B}\in\HH^{2\times 2}\).
Conveniently, the product of two matrices
\(\hmgmatrix{A},\hmgmatrix{B}\in\HH^{2\times 2}\) can be written as
one half the sum of
the commutator and anti-commutator:
\begin{equation}
  \hmgmatrix{A}\hmgmatrix{B} = \tfrac12\{\hmgmatrix{A},\hmgmatrix{B}\} + \hmgmatrix{A}\times \hmgmatrix{B}.
\end{equation}
For matrices describing inversions in spheres half the anti-commutator
equals minus the inner product (see Remark \ref{rem:anticommutator}).

\begin{figure}[!h]
    \centering
    \begin{tikzpicture}
        \node[anchor=south west,inner sep=0] at (0,0) {\includegraphics[width=\textwidth]{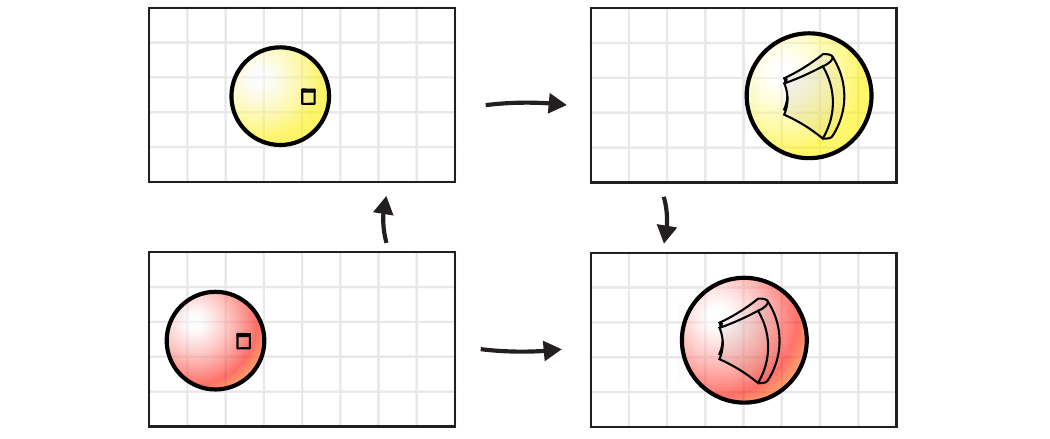}};
        \node[anchor=south west,inner sep=0] at (5.6,2.9) {\small\(\hmgmatrix{A}^{-1}\)};
        \node[anchor=south west,inner sep=0] at (8.5,2.9) {\small\(\hmgmatrix{A}\)};
        \node[anchor=south west,inner sep=0] at (7.05,4) {\small\(\hmgmatrix{B}\)};
        \node[anchor=south west,inner sep=0] at (7.05,1.6) {\small\(\tilde{\hmgmatrix{B}}\)};
    \end{tikzpicture}
    \caption{Conjugating a matrix \(\hmgmatrix{B}\) by a M{\"o}bius transformation \(\hmgmatrix{A}\) corresponds to transforming the inputs and outputs of the M{\"o}bius transformation described by \(\hmgmatrix{B}\) into new coordinates determined by \(\hmgmatrix{A}\).}
    \label{fig:ConjugationAction}
\end{figure}

To conclude this section, we note that when transforming the
coordinates by a M{\"o}bius transformation \(\hmgmatrix{A}\in\Mob(3)\) the
action of another M{\"o}bius transformation \(\hmgmatrix{B}\in\Mob(3)\)
must be conjugated by \(\hmgmatrix{A}\) to describe the same
M{\"o}bius transformation in the new coordinates:
\begin{equation}
  \tilde{\hmgmatrix{B}} = \hmgmatrix{A}\,\hmgmatrix{B}\,\hmgmatrix{A}^{-1}.
\end{equation}
See \figref{ConjugationAction} for an illustration of the action of
conjugation by a M{\"o}bius transformation. This
  relationship is of great practical importance. Any argument
  involving M{\"o}bius invariant quantities can be answered assuming a
  convenient M{\"o}bius transformation. For example, the angle between
  two intersecting spheres is the dihedral angle between the two
  planes that result from sending a point common to both spheres to
  infinity.

\subsection{Spaces of Spheres in \(S^3\)}
\label{sec:SpaceOfSpheres}
In this section we describe how to realize the space of oriented
\(p\)-spheres for \(p\in\{0,1,2\}\) inside of \(\HH^{2\times 2}\) in a
geometric way by associating M{\"o}bius transformations with each
\(p\)-sphere in \(S^3\). These constructions are motivated by the
natural association of \(p\)-spheres associated with cells of a
simplicial surface: a circumsphere for each unoriented edge, a
circumcircle for each face, and a point pair for each oriented
edge. In \secref{MobiusSpheresAlgebra} we use the algebraic structure
on \(\HH^{2\times 2}\) to manipulate these \(p\)-spheres algebraically
in a geometrically intuitive way.

To wit, each oriented 2-sphere is
associated with the M{\"o}bius transformation of inversion in that
sphere yielding the ``space of spheres'' inside \(\HH^{2\times 2}\). 
With each oriented circle in \(S^3\) we define the inversion in the circle as
the M{\"o}bius transformation that when restricted to any 2-sphere
containing the circle is an inversion in the circle in \(S^2\). It can
also be computed by composing the inversions in spheres orthogonally
intersecting in the circle. Describing this circle inversion as an
element of \(\HH^{2\times 2}\) realizes the ``space of circles''
inside of \(\HH^{2\times 2}\). Finally, the inversion in an oriented point pair
is the M{\"o}bius transformation that when restricted to any circle
containing the pair of points is the inversion in a pair of points in
\(S^1\). It is equal to the Euclidean inversion \(x\mapsto -x\) after
sending the pair of points to the origin and infinity
respectively. This realizes the space of oriented point pairs inside
\(\HH^{2\times 2}\). In these cases, the sign of the corresponding matrix will determine the orientation of the sphere. 
We visualize and describe in more detail all of
these M{\"o}bius transformations in the following sections.

\paragraph{Space of 2-spheres}
Spheres in \(S^3\) are either
    2-spheres or 2-planes in \(\RR^3\), depending on whether the
    sphere passes through the point at infinity.  To determine the
    conditions for \(\hmgmatrix{S}\in\HH^{2\times 2}\) to
    describe the inversion in an oriented sphere we 
    consider the inversion in an oriented sphere \(\hat{S}\subset\RR^3\) with mean curvature \(h\) and passing through the origin where it has unit
    normal \(n\in S^2\subset\RR^3\). 
    When \(h\neq 0\) the center of
    \(\hat{S}\) is given by
\begin{equation}
  m = - \frac{1}{h}n
\end{equation}
and so the sphere inversion \(\tilde{S}\) about \(\hat{S}\) is given by
\begin{equation}
  \tilde{S} : \RR^3\cup\{\infty\}\to \RR^3\cup\{\infty\},\qquad \tilde{S}(y) = \begin{cases}
    y - 2\sang{n,y} n & \text{if }h=0,\\
    m + \frac{1}{h^2}\frac{y-m}{\|y-m\|^2} & \text{if }h\neq 0.
  \end{cases}
\end{equation}
\begin{figure}[ht]
  \centering
  \includegraphics[width=\textwidth]{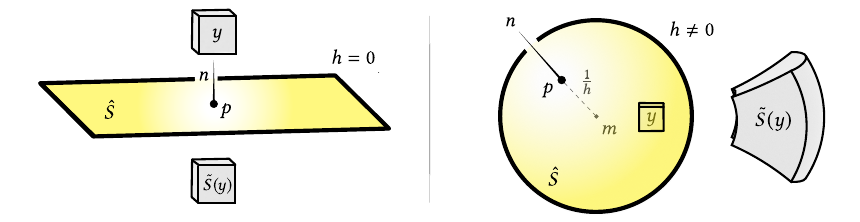}
  \caption{The inversion in a 2-sphere is well described in terms of a point \(p\) on the sphere, its normal at \(p\) and its mean curvature \(h\). This description works uniformly to describe inversions in round spheres or reflections through planes.}
\end{figure}

This sphere inversion can be represented as matrix multiplication by
\begin{equation}
  \hmgmatrix{S} = 
  \begin{pmatrix}
    n & 0 \\ -h & -n
  \end{pmatrix}.
\end{equation}
Indeed, it is easy to verify that 
\begin{equation}
  \hmgmatrix{S}\begin{pmatrix}
    y \\ 1
  \end{pmatrix} = \begin{pmatrix}
    \tilde{S}(y) \\ 1
  \end{pmatrix}(-hy - n).
\end{equation}
By conjugating \(\hmgmatrix{S}\) by the M{\"o}bius transformation \(\hmgmatrix{T}_p\) describing translation by \(p\) we deduce that the inversion in the oriented sphere in \(\RR^3\) with mean curvature \(h\) that passes through the point \(p\in\RR^3\) and at \(p\) has unit normal \(n\) is represented by the matrix
\begin{equation}
  \hmgmatrix{S} = \hmgmatrix{T}_p\begin{pmatrix} n & 0 \\ -h & -n\end{pmatrix} \hmgmatrix{T}_p^{-1}.
\end{equation}
A direct computation shows that all of the matrices describing sphere
inversions satisfy \(\tr_{\RR}\hmgmatrix{S} = 0\),
\(\hmgmatrix{S}^* = \hmgmatrix{S}\), and
\(\hmgmatrix{S}^2 = -\hmgmatrix{I}\). Conversely, these three equations
characterize all of the inversions in 2-spheres in \(S^3\).  In
\appref{QSpheres}, we show that any such matrix describes an oriented
2-sphere in \(S^3\) and that we can identify the space
\begin{equation}
  \Spheres \coloneqq \big\{\hmgmatrix{S}\in\HH^{2\times 2}\,\,\mid\,\, \tr_{\RR}\hmgmatrix{S} = 0,\, \hmgmatrix{S}^* = \hmgmatrix{S},\, \hmgmatrix{S}^2 = -\hmgmatrix{I}\big\} \subset \Mob(3).
\end{equation}
with the space of oriented \(2\)-spheres in \(S^3\). 

\begin{remark} 
  One can recover the sphere from the eigenlines of the linear
  operator \(\hmgmatrix{S}\): that is,
  \(\hat{S} = \{\psi\HH\in S^3\,\,\mid\,\, \sang{\hmgmatrix{S}\psi,\psi} =
  0\}\). Moreover,
  \(B(\hmgmatrix{S}) = \{\psi\HH\in S^3\,\,\mid\,\,
  \sang{\hmgmatrix{S}\psi,\psi} < 0\}\) is an open ball in \(S^3\). Both \(\hmgmatrix{S}\) and \(-\hmgmatrix{S}\) determine the same round 2-sphere, but they bound complimentary round balls in \(S^3\) since \(\overline{B(-\hmgmatrix{S})} = S^3\setminus
  B(\hmgmatrix{S})\). Taking this round ball bounded by \(\hat{S}\) as its interior determines the orientation of the 2-sphere. 
\end{remark}

\paragraph{Space of circles}
Just as we identified spheres with the inversion in them we now
identify circles with the M{\"o}bius transformation which, restricted
to any 2-sphere containing the circle is an inversion in the circle. Since this is a
M{\"o}bius geometric definition, we may transform the circle into a
line by sending a point on the circle to infinity. Any sphere
containing the circle will then be sent to a plane containing the
line.  The definition of the inversion in the line states that the
restriction to any plane containing the line is equal to the
reflection about the line. In other words, the inversion in a line is
\(180^\circ\) rotation around the line. By reversing the M{\"o}bius
transformation mapping the circle into the line we see that the
inversion in a circle can also be thought of as a \(180^\circ\)
rotation around the circle. \figref{CircleRotation} (\figloc{left})
illustrates the \(180^\circ\) degree rotation around an oriented line
in \(\RR^3\) that coincides with the Euclidean rotation about the
line. \figref{CircleRotation} (\figloc{right}) illustrates the
\(180^\circ\) rotation about the oriented circle defined as the
rotation around the oriented line that the circle transforms into by
sending one of the points on the circle to infinity.

\begin{figure}[h]
  \includegraphics[width=\columnwidth]{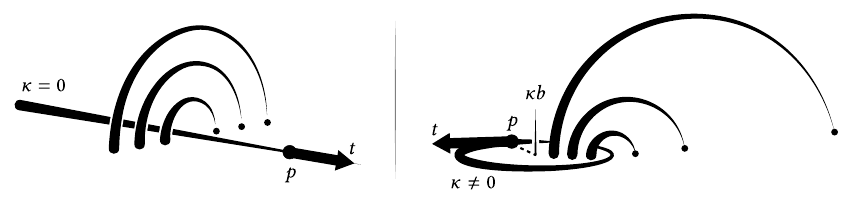}
  \caption{It is useful to describe the \(180^\circ\) rotation around a circle (or line) in \(S^3\) in terms of a point \(p\) on the circle, its tangent at \(p\), and its curvature binormal.}
  \label{fig:CircleRotation} 
\end{figure}

In the case when the circle in \(S^3\) is a line in \(\RR^3\) it is
easy to see that the \(180^\circ\) rotation around the line can also
be computed as the composition of the inversion in two planes
orthogonally intersecting in the line and that this doesn't depend on
the choice of planes. M{\"o}bius geometrically, this means that
inversion in a circle can be computed as the composition of the
inversions in any two spheres orthogonally intersecting in the
circle. This can be done explicitly by using the identification of the
space of oriented spheres with \(\Spheres\).

Consider an oriented circle in \(\RR^3\) passing through the origin
with unit tangent \(t\) and mean curvature vector \(\kappa n\). We can
write the \(180^\circ\) rotation about the circle as the orthogonal
intersection of the sphere
\begin{equation}
  \hmgmatrix{S} = \begin{pmatrix}
    n & 0 \\ \kappa & -n
  \end{pmatrix}
\end{equation}
and the plane defined by the binormal \(b = t\times n\)
\begin{equation}
  \hmgmatrix{P} = \begin{pmatrix}
    -b & 0 \\ 0 & -b
  \end{pmatrix}.
\end{equation}
The \(180^\circ\) degree rotation \(C\) about the circle is the
composition of the reflection in the plane \(P\) and the inversion
\(S\) in the sphere:
\begin{equation}
  \hmgmatrix{C} = \hmgmatrix{S}\hmgmatrix{P} = \begin{pmatrix}
    t & 0 \\ -\kappa b & t
  \end{pmatrix}.
\end{equation}
To describe a circle that instead passes through a point \(p\in\RR^3\)
instead of the origin, the \(180^\circ\)-rotation around the
circle will be given by
\begin{equation}
  \hmgmatrix{C} = \hmgmatrix{T}_p\begin{pmatrix}
    t & 0 \\ -\kappa b & t
  \end{pmatrix}\hmgmatrix{T}_p^{-1}.
\end{equation}
A direct computation shows that all such matrices satisfy \(\hmgmatrix{C}^* = -\hmgmatrix{C}\) and \(\hmgmatrix{C}^2 = -\hmgmatrix{I}\). 
In \appref{QSpheres} we also show that any such matrix describes an oriented circle in \(S^3\) and that we can identify the space
\begin{equation}
  \Circles \coloneqq \big\{\hmgmatrix{C}\in\HH^{2\times 2}\,\,\mid\,\, \hmgmatrix{C}^* = -\hmgmatrix{C},\, \hmgmatrix{C}^2 = -\hmgmatrix{I}\big\} \subset \Mob(3).
\end{equation}
with the space of oriented circles in \(S^3\). 

Notice also that \(\Circles\subset\sp\), allowing us to interpret an element
\(\hmgmatrix{C}\in\Circles\) also as an infinitesimal M{\"o}bius
transformation, generating the one-parameter group of rotations around
the oriented circle.  \figref{CircleVectorField} visualizes the
M{\"o}bius vector field \(\hmgmatrix{V}_{\hmgmatrix{C}}\) describing
the infinitesimal rotation around a circle
\(\hmgmatrix{C}\in\Circles\). The orientation of the circle is encoded
in the sign of \(\hmgmatrix{C}\) since the vector fields
\(\hmgmatrix{V}_{-\hmgmatrix{C}} = -\hmgmatrix{V}_{\hmgmatrix{C}}\)
differ by a sign and so the 1-parameter family of rotations they
generate describe the rotations around the circle in two opposite
directions.

\begin{figure}[h]
  \includegraphics[width=\columnwidth]{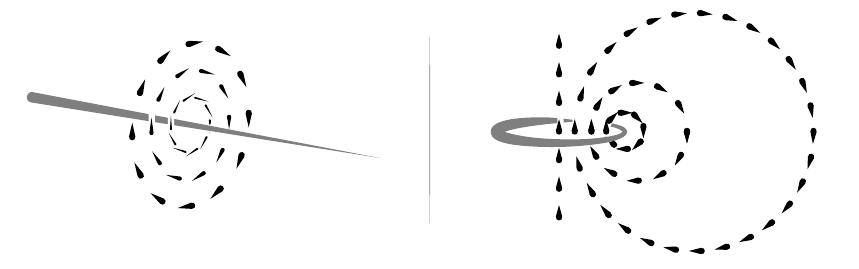}
  \caption{The orientation of a circle associated with \(\hmgmatrix{C}\in\Circles\) can be determined by looking at the M{\"o}bius vector field \(\hmgmatrix{V}_{\hmgmatrix{C}}\) it generates. The vector field describes an infinitesimal rotation around the circle or line and the orientation is chosen so that the rotation is clockwise around the circle.}
  \label{fig:CircleVectorField}
\end{figure}

\paragraph{Space of point pairs}
Lastly, we identify each pair of points with the M{\"o}bius transformation that when restricted to any circle containing the two points is an inflection in the pair of points. We can first map the point pair to the
origin and infinity by a M{\"o}bius transformation. All of the circles
containing the point pair are mapped into lines going through the
origin and the inversion in the pair of points is now just the
Euclidean inversion \(x\mapsto -x\). This does not depend on the
choice of M{\"o}bius transformation used to send the point pair to the
origin and infinity since the Euclidean inversion is invariant under
all stretch rotations (the subgroup of M{\"o}bius transformations
fixing the origin and infinity).

\begin{figure}[h]
  \includegraphics[width=\columnwidth]{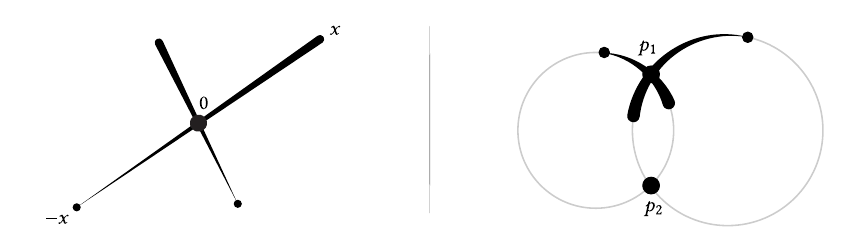}
  \caption{An inversion in a pair of points is the M{\"o}bius
    transformation defined so that the restriction on every circle
    containing the two points is the inversion in a pair of points in
    \(S^1\). When both of the points are in \(\RR^3\) the inversion in them
    can be expressed in terms of one of the points
    \(p_1\) and the difference of the point positions \(p_2 - p_1\).}
\end{figure}

To determine an explicit expression for the
inversion \(\hmgmatrix{U}\in\HH^{2\times 2}\) in a pair of points
\((p_1,p_2)\in \RR^3\times\RR^3\), \(p_1\neq p_2\), let
\(\hmgpoint{p}_i = \hmgcoords{p_i}{1}\) for \(i=1,2\). As a linear
operator acting on homogeneous coordinates, \(\hmgmatrix{U}\) is
defined by its action on a basis of \(\HH^2\). Define the
action of \(\hmgmatrix{U}\) on \(\hmgmatrix{p}_1\) to be equal to minus the
identity and the action of \(\hmgmatrix{U}\) on \(\hmgmatrix{p}_2\) to be
equal to the identity. This clearly defines a M{\"o}bius
transformation that is M{\"o}bius invariantly associated with the pair
of points, and one can readily check that it agrees with the inversion
in the pair of points by sending them to the canonical locations of
the origin and infinity. Starting from the explicit ansatz
\begin{equation}
  \hmgmatrix{U} = \hmgmatrix{T}_{p_1}\begin{pmatrix}1 & 0 \\ 2(p_2 - p_1)^{-1} & -1\end{pmatrix}\hmgmatrix{T}_{p_1}^{-1}
  \label{eq:PointPairInversion}
\end{equation}
it is straightforward to verify that
\(\hmgmatrix{U}|_{\hmgpoint{p}_1} = -\hmgmatrix{I}\) and
\(\hmgmatrix{U}|_{\hmgpoint{p}_2} = \hmgmatrix{I}\) by evaluating
\(\hmgmatrix{U}\) on vectors \(\psi_i\in\hmgpoint{p}_i\) for
\(i=1,2\).  All such matrices \(\hmgmatrix{U}\) associated with
oriented point pairs satisfy \(\hmgmatrix{U}^2 = \hmgmatrix{I}\) and
\(\hmgmatrix{U}^* = -\hmgmatrix{U}\). In \appref{QSpheres} we show
that any such matrix describes an oriented point pair in \(S^3\) and
that we can identify the space
	\begin{equation}
		\PointPairs{} \coloneqq \{\hmgmatrix{U}\in \HH^{2\times 2} \,\,|\,\, \hmgmatrix{U}^* = -\hmgmatrix{U},\,\hmgmatrix{U}^2=\hmgmatrix{I}\}\subset\Mob(3).
	\end{equation}
  with the space of oriented point pairs in \(S^3\).

The property of this M{\"o}bius transformation that enables it to
interact well with the inversions in spheres and circles is that the
Euclidean inversion can be computed by composing the inversion
(reflection) in a plane going through the origin and the \(180^\circ\)
rotation about the line orthogonal to the plane through the
origin. 
Thus, since any pair of points can be sent to zero and
infinity by a M{\"o}bius transformation this implies that the
inflection in two points can be computed as the composition of a
sphere inversion and a circle inversion for any sphere-circle pair
which intersect orthogonally in the pair of points (see \prpref{circlesphereprod}).

The space of point pairs also satisfies \(\PointPairs\subset\sp\) and
so we can interpret each point pair as an infinitesimal M{\"o}bius
transformation, generating the 1-parameter subgroup of scaling
transformations after using a M{\"o}bius transformation to send the
points to the origin and infinity. In \figref{PointPairVectorField} we
visualize the vector field
\(\hmgmatrix{V}_{\hmgmatrix{U}} = -\hmgmatrix{V}_{\hmgmatrix{-U}}\)
for the origin-infinity point pair. The vector field is the radial
vector, which is a harmonic vector field.

\begin{figure}[h]
  \includegraphics[width=\columnwidth]{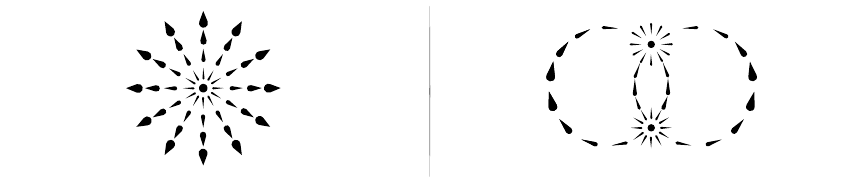}
  \caption{Since \(\PointPairs\subset\sp\) every element \(\hmgmatrix{U}\in\PointPairs\) generates a M{\"o}bius vector field \(\hmgmatrix{V}_{\hmgmatrix{U}}\) that corresponds to an infinitesimal scaling transformation in any affine chart where the pair of points are at zero and infinity.}
  \label{fig:PointPairVectorField}
\end{figure}

Since M{\"o}bius
transformations preserve harmonic vector fields, we deduce that the
vector field associated with an oriented point pair \((p_1,p_2)\) is
also a harmonic vector field on \(S^3\setminus\{p_1,p_2\}\) with a
source and sink at \(p_1\) and \(p_2\),
respectively. Integral curves of this vector field are visualized in \figref{PointPairIntegralCurves}. 

\begin{figure}[ht]
  \centering
  \includegraphics{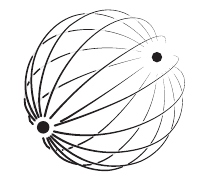}
  \caption{Integral curves of the harmonic vector field associated with an oriented pair of points visualized on a 2-sphere. All of the integral curves are oriented circles going through through the pair of points. The vector field has a source at the point associated with the \(-1\) eigenvalue and a sink at the point associated with the \(+1\) eigenvalue.}
  \label{fig:PointPairIntegralCurves}
\end{figure}

\subsection{Light Cone and Infinitesimal Translations}
\label{sec:LightConeCotangent}
There are two additional spaces that we also need to define
inside of \(\HH^{2\times 2}\): a realization of \(S^3\) itself inside of
\(\HH^{2\times 2}\) and a space of infinitesimal translations. These spaces naturally arise in the classification of pencils of spheres in \secref{SpherePencils}. To realize \(S^3\) inside
of \(\HH^{2\times 2}\) one can take a renormalized limit of the M{\"o}bius
transformations obtained by inversion in spheres of vanishing radius
centered around the points---in this way we can think of each point in
\(S^3\) as a sphere of zero radius.

\paragraph{Light cone model of \(S^3\)} 
The light cone model of M{\"o}bius \(S^3\) can be realized inside of
\(\HH^{2\times 2}\) by considering the five-dimensional vector space
\begin{align}
  \RR^{4,1} & \coloneqq \big\{\hmgmatrix{Y}\in\HH^{2\times 2}\,\,|\,\, \hmgmatrix{Y}^* =\hmgmatrix{Y},\, \tr_{\RR}\hmgmatrix{Y} = 0\big\} \\ 
  & \,= \Big\{
    \begin{pmatrix}
      a & \beta \\ \gamma & -a
    \end{pmatrix}\,\,\mid\,\, a\in\RR^{3},\,\beta,\gamma\in\RR
    \Big\},
\end{align}
endowed with the symmetric bilinear form 
\begin{equation}
  \langle \hmgmatrix{Y},\hmgmatrix{Y}\rangle = -\tfrac12\tr_{\RR}\big(\hmgmatrix{Y}^2\big) = \|a\|^2 - \beta\gamma,\qquad \hmgmatrix{Y} = \begin{pmatrix}a & \beta \\ \gamma & -a\end{pmatrix}
\end{equation}
of signature (4,1).
It is classical that the conformal compactification \(S^3 = \RR^3\cup\{\infty\}\) can be realized as the projectivized light cone in \(\RR^{4,1}\). Let
\begin{equation}
  \LightCone = \{\hmgmatrix{Y}\in\RR^{4,1}\,\,\mid\,\, \sang{\hmgmatrix{Y},\hmgmatrix{Y}} = 0\}
\end{equation}
then 
\begin{equation}
  S^3 \cong \LightCone/\RR^\times
\end{equation}
where the action of \(\RR^\times\) on \(\RR^{4,1}\) is given by scaling.
\begin{remark}
  A direct computation shows that for
  \(\hmgmatrix{A,B}\in\RR^{4,1}\),
  \(\tfrac12\{\hmgmatrix{A},\hmgmatrix{B}\} =
  -\langle\hmgmatrix{A},\hmgmatrix{B}\rangle\hmgmatrix{I}\). Therefore, since the space of oriented 2-spheres \(\Spheres\) is also a subset of
  \(\RR^{4,1}\) it is the Lorentzian unit sphere \(\Spheres = \{\hmgmatrix{S}\in\RR^{4,1}\,\,\mid\,\, \langle\hmgmatrix{S},\hmgmatrix{S}\rangle = 1\}\) inside \(\RR^{4,1}\).
  \label{rem:anticommutator}
\end{remark}

An explicit isomorphism between these two models of M{\"o}bius \(S^3\) is given by the Euclidean lift into the light cone
\begin{equation}
  \Psi_p \coloneqq \hmgmatrix{T}_{p}\begin{pmatrix} 0 & 0 \\ 1 & 0 \end{pmatrix}\hmgmatrix{T}_p^{-1} = \begin{pmatrix}
    p & \|p\|^2 \\ 1 & -p
  \end{pmatrix}.
  \label{eq:EuclideanLift}
\end{equation}
It is straightforward to verify that \(\sang{\Psi_p,\Psi_p} = 0\). 
The point infinity is described by the point 
\begin{equation}
  \infty_{4,1}\coloneqq \begin{pmatrix}0 & 1 \\ 0 & 0\end{pmatrix}
\end{equation}
The map \(\hmgcoords{p}{1}\mapsto [\Psi_p]\), extended so that the image of \(\infty\) is \(\infty_{4,1}\), defines a conformal isomorphism between our quaternionic projective model of \(S^3\) and the classical light cone model of \(S^3\). \eqref{EuclideanLift} is called the Euclidean lift since the identification \(p\in\RR^3\mapsto \Psi_p\in\RR^{4,1}\) is an isometry.
All other lifts into the positive
light cone are obtained by multiplying \(\Psi_p\) by a positive
scalar.

\paragraph{Infinitesimal translations}
To describe geodesic motion inside a parabolic sphere pencil (see \secref{SpherePencils}) one needs to work with a bundle of infinitesimal M{\"o}bius transformations that describe infinitesimal translations when the basepoint is sent to infinity. For each isotropic line \(\hmgpoint{p}\in S^3\) define
\begin{equation}
  \Cotangent_{\hmgpoint{p}} \coloneqq \big\{\hmgmatrix{W}\in\sp\,\,\mid\,\,\ker \hmgmatrix{W}\supset \hmgpoint{p},\,\operatorname{im}\hmgmatrix{W}\subset\hmgpoint{p}\big\}.
\end{equation}
\(\Cotangent_{\hmgpoint{p}}\) and \(\Cotangent_{\hmgmatrix{A}\hmgpoint{p}}\) are related by conjugation by \(\hmgmatrix{A}\) for any \(\hmgmatrix{A}\in\Mob(3)\).
With a little abuse of notation, for a point \(p\in\RR^3\) we will write \(\Cotangent_{p}\) to denote \(\Cotangent_{\hmgcoords{p}{1}}\). It is straightforward to verify that
\begin{equation}
  \Cotangent_{p} = \Big\{\hmgmatrix{T}_{p}\begin{pmatrix}
    0 & 0 \\ w & 0
  \end{pmatrix}
  \hmgmatrix{T}_p^{-1}\,\,\mid\,\, w\in\RR^3\Big\}.
  \label{eq:Vexpression}
\end{equation}
All matrices on the right-hand side satisfy the kernel and image condition defining \(\Cotangent_{p}\), and since both the left and right-hand sides of \eqref{Vexpression} are vector spaces of the same dimension they are equal.
Since \begin{equation}\Cotangent_{\infty} = \Big\{\begin{pmatrix}0 & w \\ 0 & 0 \end{pmatrix}\,\,\mid\,\, w\in\RR^3\Big\}\end{equation} consists of nilpotent matrices 
\begin{equation}
  \exp\begin{pmatrix}0 & w \\ 0 & 0 \end{pmatrix} = \hmgmatrix{I} + \begin{pmatrix}0 & w \\ 0 & 0 \end{pmatrix} = \hmgmatrix{T}_{w}
\end{equation}
and therefore elements of \(\Cotangent_p\) can be identified with infinitesimal translations of \(\RR^3\) after \(p\in\RR^3\) is sent to infinity by a M{\"o}bius transformation. 

\subsection{Configurations of Pairs of 2-Spheres in \(S^3\)}
\label{sec:SpherePencils}
To understand the geometry of rolling spheres in the discrete setting
we characterize configurations of pairs of spheres in
\(S^3\) up to M{\"o}bius transformation and how
they may be mapped into one another through orthogonal
trajectories. We accomplish this exploiting the algebraic structure of
the space of 2-spheres as described in the previous section.

\begin{figure}[h]
  \centering
  \includegraphics[width=.75\columnwidth]{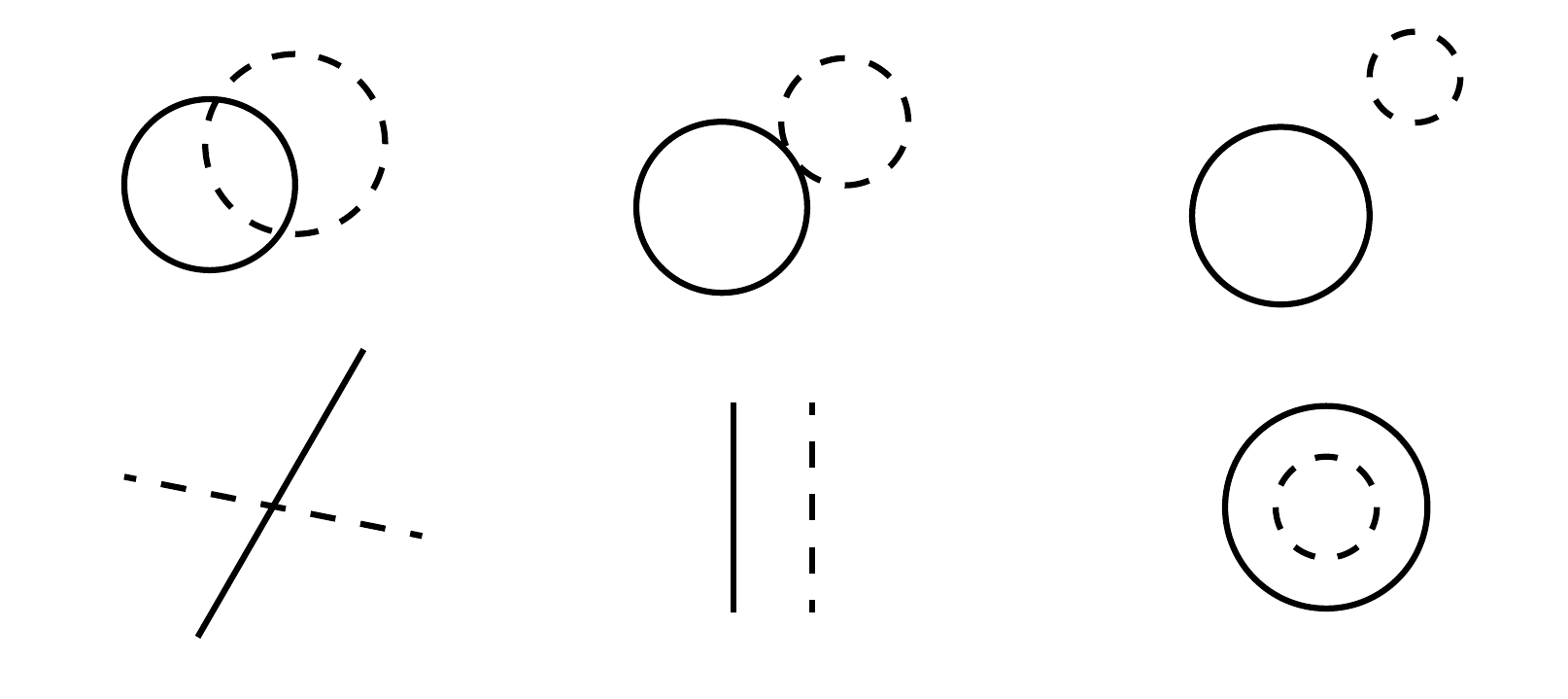}
  \caption{Pairs of circles in \(S^2\) can always be arranged by a M{\"o}bius transformation into a canonical form where the pencil of circles that they generate takes a simple form. \figloc{Left:} if the circles intersect in two points then they generate an elliptic pencil of circles that can be arranged to look like Euclidean lines intersecting in a single point. \figloc{Middle:} if the circles intersect in a single point then they generate a parabolic pencil that can be arranged to look like parallel Euclidean lines. \figloc{Right:} if the circles are disjoint then they generate a hyperbolic pencil that can be arranged to look like Euclidean concentric circles.}
  \label{fig:CircleConfigurations}
\end{figure} 

To understand the possible configurations of pairs of 2-spheres it is
useful to first consider configurations of pairs of circles in
\(S^2\). These can always be arranged by a M{\"o}bius transformation
into one of three configurations illustrated in
\figref{CircleConfigurations}: they can be (1) lines intersecting in a
single point, (2) parallel lines,
or (3) concentric circles. These
are characterized by the number of intersection
points, two, one, and none. More coarsely, configurations of pairs of circles in the plane
are M{\"o}bius equivalent to either concentric circles or a pair of
lines. A completely analogous characterization of configurations of
pairs of 2-spheres in \(S^3\) exists as well: configurations of pairs
of 2-spheres in \(S^3\) can always be arranged by a M{\"o}bius
transformation into one of the following three situations. They can be
\begin{enumerate}
  \item planes that intersect in a line 
  \item parallel planes, 
  \item concentric spheres. 
\end{enumerate}
As in the case of circles in \(S^2\) the configurations of pairs of
2-spheres are characterized by their intersections: in case (i) the two spheres
intersect in a circle and we say that they lie in an elliptic sphere
pencil. An elliptic sphere pencil consists of all 2-spheres containing
a shared circle. in case (ii) we say
that two spheres lie in a parabolic sphere pencil intersecting in only
a single point. Such a pencil consists of all 2-spheres
going through a shared intersection point with the same normal vector
there. Parallel planes form a parabolic sphere pencil with the
intersection point at infinity. In case (iii) the two spheres have an empty
intersection and we say that they lie in a hyperbolic sphere pencil.
Such a pencil consists of all 2-spheres about which a sphere inversion
swaps a shared pair of points. Concentric spheres about the origin
make up the hyperbolic sphere pencil determined by, and swapping, zero
and infinity.

\paragraph{Canonical Form}
If the pair of spheres have non-empty intersection then we can use a
M{\"o}bius transformation to send one of their intersection points to
infinity and in the process transform both of the spheres into
planes. The planes are either parallel or they intersect in a line.

\begin{figure}[h]
  \centering
  \includegraphics[width=\columnwidth]{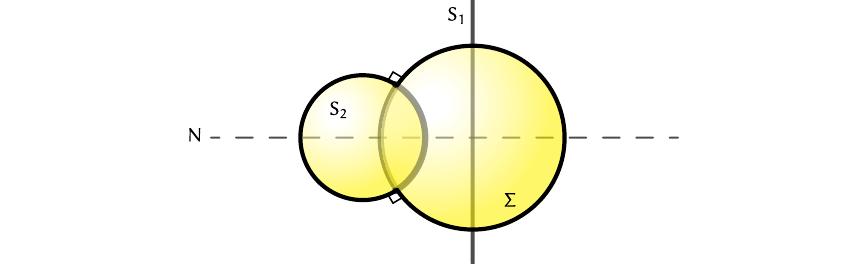}
  \caption{A geometric construction used to show that any pair of disjoint spheres can be arranged by M{\"o}bius transformation of \(S^3\) so that they look like concentric spheres. The dashed line represents the normal circle \(\hmgmatrix{N}\) that intersects \(\hmgmatrix{S}_1\) and \(\hmgmatrix{S}_2\) orthogonally.}
  \label{fig:ConcentricCanonical}
\end{figure}

To see that disjoint 2-spheres can be canonically transformed into
concentric 2-spheres consider the following construction.  Use a
M{\"o}bius transformation to transform one of spheres into a
plane. The second sphere must necessarily still be a round sphere. Let
\(\hmgmatrix{N}\) be the normal line that intersects the plane and the second
sphere orthogonally, and let \(\Sigma\) be a further sphere which
intersects the normal line, plane and second sphere
orthogonally---this can be achieved by making a suitable choice among
the spheres centered on the intersection point of the normal line and
the plane. Now use a M{\"o}bius transformation to send \(\hmgmatrix{N}\) and
\(\Sigma\) to a line and a plane respectively. This transforms the
plane and the second sphere into two spheres that intersect a plane
(transformed \(\Sigma\)) along with a normal line to the plane
orthogonally. As such, the two spheres must be concentric. For a
2-dimensional illustration see \figref{ConcentricCanonical}.

\subsubsection{Parameterizing Pencils of Spheres}
In essence the pencil of spheres connecting a pair of spheres
\(\hmgmatrix{S}_1,\hmgmatrix{S}_2\in\Spheres\) can be parameterized as  
\begin{equation}
  \hmgmatrix{S}(\theta) = \hmgmatrix{S}_1 + \theta \,\hmgmatrix{S}_2
\end{equation}
in each of the three cases.  Even though \(\hmgmatrix{S}(\theta)\) must be
normalized to lie in \(\Spheres\), the unnormalized version still
describes the same M{\"o}bius transformation on \(S^3\). The type of
pencil can be determined by the number of points (spheres of zero
radius) occurring in the pencil: zero for elliptic, one for
parabolic, and two for hyperbolic pencils. Since for spheres
\(\hmgmatrix{S}\in\Spheres\),
\(\langle \hmgmatrix{S},\hmgmatrix{S}\rangle = 1\) and for points \(\Psi\in \LightCone \),
\(\langle \Psi,\Psi\rangle = 0\), the number of
points that appear in the pencil can be decided by looking at the sign
of the discriminant
\begin{equation}
  4\langle \hmgmatrix{S}_1, \hmgmatrix{S}_2\rangle^2 - 4
\end{equation}
of the quadratic equation 
\[
  0 = \langle \hmgmatrix{S}(\theta), \hmgmatrix{S}(\theta)\rangle = 1 + 2\theta\langle \hmgmatrix{S}_1,\hmgmatrix{S}_2\rangle + \theta^2.
\]
For hyperbolic sphere pencils, the parameterization is only valid for a certain range of \(\theta\).

This parameterization of sphere pencils describes geodesics up to parameterization in \(\Spheres\). To directly map 
\(\hmgmatrix{S}_1\) into \(\hmgmatrix{S}_2\) in this manner
one may use the M{\"o}bius transformation
\begin{equation}
  \hmgmatrix{Q} = \hmgmatrix{I} - \hmgmatrix{S}_2\hmgmatrix{S}_1.
\end{equation}
Indeed, since
\[ \hmgmatrix{Q}\hmgmatrix{S}_1 = \hmgmatrix{S}_1 + \hmgmatrix{S}_2 =
  \hmgmatrix{S}_2 \hmgmatrix{Q} \] one finds that
\(\hmgmatrix{Q}\hmgmatrix{S}_1\hmgmatrix{Q}^{-1} = \hmgmatrix{S}_2\).
To see that this transformation describes motion inside the sphere
pencil by orthogonal trajectories we will analyze this transformation
for pairs of spheres in each of the three kinds of sphere pencils
separately. In all cases \(\hmgmatrix{S}_1\times\hmgmatrix{S}_2\) is an element of \(\sp\) describing the M{\"o}bius vector field orthogonal to every sphere of the pencil. 

\subsubsection{Elliptic Sphere Pencils}
\label{sec:ellipticpencil}
Elliptic sphere pencils are the most relevant configuration for the
analysis of the discrete Willmore energy.
\begin{figure}[h]
  \centering
  \includegraphics[width=0.4\columnwidth]{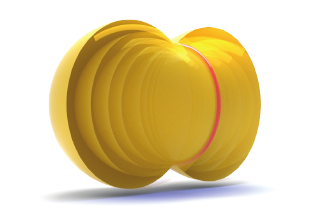}
\end{figure}
Suppose \(\hmgmatrix{S}_1,\hmgmatrix{S}_2\in\Spheres\) generate
an elliptic sphere pencil. Using a M{\"o}bius transformation we may
assume that they describe planes 
intersecting in a line through the origin
\[ \hmgmatrix{S}_i = \begin{pmatrix}n_i & 0 \\ 0 & -n_i\end{pmatrix}\]
for some choice of \(n_i\in S^2\subset\mathbb{R}^3\). Therefore,
\[
  \hmgmatrix{S}_1\hmgmatrix{S}_2 = -\cos\alpha\, \hmgmatrix{I} + \sin\alpha\, \hmgmatrix{C},
\]
where \(\alpha\in[0,\pi)\) is the angle between the two planes
intersecting in the line \(\hmgmatrix{C}\). Here
\(\hmgmatrix{C}\in\Circles\) describes the intersection line
\[ \hmgmatrix{C} = \begin{pmatrix} t & 0 \\ 0 & t\end{pmatrix} \] with
\(t \in S^2 \in\mathbb{R}^3\) the unit vector in the direction
\(n_1\times n_2\).  Since \(\hmgmatrix{C}^2 = -\hmgmatrix{I}\) we can
compute the exponential map in closed form
\[ \exp(\theta\,\hmgmatrix{C}) = \cos\theta~\hmgmatrix{I} +
  \sin\theta~\hmgmatrix{C} \] for \(\theta\in\mathbb{R}\). Since these
are all diagonal matrices the action of \(\exp(\theta \hmgmatrix{C})\)
on points in \(S^3\) is a rotation by
\(2\theta\) around \(\hmgmatrix{C}\). Hence the
M{\"o}bius transformation \(\exp(\frac{\alpha}{2}\hmgmatrix{C})\)
transforms \(\hmgmatrix{S}_1\) onto \(\hmgmatrix{S}_2\) via orthogonal
trajectories. When we say that this describes motion via orthogonal
trajectories we mean that the family of spheres
\[
  \hmgmatrix{S}_t = \exp\left(\tfrac{t\alpha}{2}\hmgmatrix{C}\right)\hmgmatrix{S}_1\exp\left(-\tfrac{t\alpha}{2}\hmgmatrix{C}\right)
\]
from \(t = 0\) to \(t=1\) interpolates between \(\hmgmatrix{S}_1\) and \(\hmgmatrix{S}_2\) inside their shared elliptic sphere pencil. Under this mapping, points in \(\hmgmatrix{S}_1\) follow circular trajectories that are orthogonal to the trajectory of spheres \(\hmgmatrix{S}_t\) in the sphere pencil. So we have shown 
\begin{proposition}
	Let \(\hmgmatrix{S}_1,\hmgmatrix{S}_2\in\Spheres\) be two spheres intersecting in a circle \(\hmgmatrix{C}\). Then
  \begin{equation}
    \sang{\hmgmatrix{S}_1,\hmgmatrix{S}_2} = \cos\alpha,\qquad \hmgmatrix{S}_1\times\hmgmatrix{S}_2 = \sin\alpha\,\hmgmatrix{C}
  \end{equation}
	where \(\alpha\) is the intersection angle between the two spheres. Moreover, the rotation around \(\hmgmatrix{C}\) by an angle \(\alpha\) transforms \(\hmgmatrix{S}_1\) into \(\hmgmatrix{S}_2\) and is equal to the M{\"o}bius transformation \(\exp\big(\frac{\alpha}{2}\hmgmatrix{C}\big)\).
	\label{prp:sphereprod}
\end{proposition}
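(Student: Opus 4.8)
The plan is to exploit the fact that every quantity appearing in the statement transforms covariantly under conjugation by a M\"obius transformation, so that it suffices to verify all three claims for one convenient representative of the elliptic pencil. Concretely, for $\hmgmatrix{A}\in\Mob(3)$ cyclicity of the real trace gives $\sang{\hmgmatrix{A}\hmgmatrix{S}_1\hmgmatrix{A}^{-1},\hmgmatrix{A}\hmgmatrix{S}_2\hmgmatrix{A}^{-1}} = -\tfrac12\tr_{\RR}(\hmgmatrix{A}\hmgmatrix{S}_1\hmgmatrix{S}_2\hmgmatrix{A}^{-1}) = \sang{\hmgmatrix{S}_1,\hmgmatrix{S}_2}$, the commutator is equivariant so that $(\hmgmatrix{A}\hmgmatrix{S}_1\hmgmatrix{A}^{-1})\times(\hmgmatrix{A}\hmgmatrix{S}_2\hmgmatrix{A}^{-1}) = \hmgmatrix{A}(\hmgmatrix{S}_1\times\hmgmatrix{S}_2)\hmgmatrix{A}^{-1}$, the intersection angle $\alpha$ is M\"obius invariant, and the circle $\hmgmatrix{C}$ likewise transforms by conjugation. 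Since the two spheres meet in a circle they generate an elliptic pencil, so by the canonical-form discussion above I may assume $\hmgmatrix{S}_i = \begin{pmatrix} n_i & 0 \\ 0 & -n_i\end{pmatrix}$ for unit vectors $n_i\in S^2\subset\Im\HH$, with $\hmgmatrix{C} = \begin{pmatrix} t & 0 \\ 0 & t\end{pmatrix}$ and $t$ in the direction of $n_1\times n_2$.

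The heart of the argument is then a one-line quaternion computation. Multiplying the diagonal matrices gives $\hmgmatrix{S}_1\hmgmatrix{S}_2 = \begin{pmatrix} n_1 n_2 & 0 \\ 0 & n_1 n_2\end{pmatrix}$, and because $n_1,n_2$ are unit imaginary quaternions their product is $n_1 n_2 = -\sang{n_1,n_2} + n_1\times n_2 = -\cos\alpha + \sin\alpha\, t$, whence $\hmgmatrix{S}_1\hmgmatrix{S}_2 = -\cos\alpha\,\hmgmatrix{I} + \sin\alpha\,\hmgmatrix{C}$.

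To extract the two identities I would use the decomposition $\hmgmatrix{S}_1\hmgmatrix{S}_2 = \tfrac12\{\hmgmatrix{S}_1,\hmgmatrix{S}_2\} + \hmgmatrix{S}_1\times\hmgmatrix{S}_2$. By Remark~\ref{rem:anticommutator} the anticommutator of two elements of $\RR^{4,1}$ is $\tfrac12\{\hmgmatrix{S}_1,\hmgmatrix{S}_2\} = -\sang{\hmgmatrix{S}_1,\hmgmatrix{S}_2}\hmgmatrix{I}$, a multiple of $\hmgmatrix{I}$, while the commutator part $\hmgmatrix{S}_1\times\hmgmatrix{S}_2$ is skew-adjoint and hence lies in $\sp$; since $\RR\hmgmatrix{I}\cap\sp = \{0\}$, the splitting of $\hmgmatrix{S}_1\hmgmatrix{S}_2$ into an $\hmgmatrix{I}$-multiple plus an element of $\sp$ is unique. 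Comparing with $-\cos\alpha\,\hmgmatrix{I} + \sin\alpha\,\hmgmatrix{C}$, where $\hmgmatrix{C}\in\Circles\subset\sp$, identifies the $\hmgmatrix{I}$-part as $-\cos\alpha\,\hmgmatrix{I}$, giving $\sang{\hmgmatrix{S}_1,\hmgmatrix{S}_2} = \cos\alpha$, and the $\sp$-part as $\sin\alpha\,\hmgmatrix{C} = \hmgmatrix{S}_1\times\hmgmatrix{S}_2$. For the rotation claim I would use $\hmgmatrix{C}^2 = -\hmgmatrix{I}$ to obtain $\exp\bigl(\tfrac{\alpha}{2}\hmgmatrix{C}\bigr) = \cos\tfrac{\alpha}{2}\,\hmgmatrix{I} + \sin\tfrac{\alpha}{2}\,\hmgmatrix{C}$, which in canonical form is the diagonal matrix with entries the unit quaternion $\cos\tfrac{\alpha}{2} + \sin\tfrac{\alpha}{2}\,t$; conjugating $\hmgmatrix{S}_1$ by it rotates the normal $n_1$ by the angle $\alpha$ about the axis $t$, and since $n_1,n_2$ lie in the plane orthogonal to $t$ and are separated by $\alpha$, this carries $n_1$ to $n_2$, i.e.\ $\hmgmatrix{S}_1$ to $\hmgmatrix{S}_2$.

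I expect the only genuine obstacle to be the bookkeeping of angle and orientation conventions: one must confirm that the Euclidean angle between the normals $n_1,n_2$ coincides with the declared intersection angle $\alpha$ of the oriented spheres, that the orientation of $t = (n_1\times n_2)/\sin\alpha$ matches the one carried by $\hmgmatrix{C}$, and that conjugation by the unit quaternion $\cos\tfrac{\alpha}{2} + \sin\tfrac{\alpha}{2}\,t$ realizes a rotation by exactly $\alpha$ in the sense sending $n_1$ onto $n_2$ (rather than $-\alpha$ or $2\alpha$). Once these signs are pinned down the computation is immediate, and the covariance argument of the first paragraph then promotes the canonical-form verification to an arbitrary elliptic pencil.
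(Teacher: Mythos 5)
Your proof is correct and follows essentially the same route as the paper: reduce to the canonical form of two planes through the origin, compute \(\hmgmatrix{S}_1\hmgmatrix{S}_2 = -\cos\alpha\,\hmgmatrix{I} + \sin\alpha\,\hmgmatrix{C}\) from the quaternion product \(n_1 n_2\), split into the anticommutator and commutator parts, and exponentiate using \(\hmgmatrix{C}^2 = -\hmgmatrix{I}\). Your explicit justification of the conjugation-covariance reduction and of the uniqueness of the \(\RR\hmgmatrix{I}\oplus\sp\) splitting is a bit more careful than the paper's, which simply asserts the canonical form and reads off the result.
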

Applying these insights to the expression \(\hmgmatrix{Q}\) we have
\[
  \hmgmatrix{Q} = \hmgmatrix{I} - \hmgmatrix{S}_2\hmgmatrix{S}_1 = (1 + \cos\alpha)\hmgmatrix{I} + \sin\alpha\, \hmgmatrix{C} = 2\cos\tfrac{\alpha}{2}\big( \cos\tfrac{\alpha}{2}\hmgmatrix{I} + \sin\tfrac{\alpha}{2}\hmgmatrix{C} \big) = 2\cos\tfrac{\alpha}{2}\exp\big(\tfrac{\alpha}{2}\hmgmatrix{C}\big).
\]
Since the scale factor is irrelevant, we see that \(\hmgmatrix{Q}\) is the M{\"o}bius transformation that maps \(\hmgmatrix{S}_1\) into \(\hmgmatrix{S}_2\) via orthogonal trajectories. 

\subsubsection{Parabolic Sphere Pencils}
Suppose \(\hmgmatrix{S}_1,\hmgmatrix{S}_2\in\Spheres\) generate a
parabolic sphere pencil. 
\begin{figure}[h]
  \centering
  \includegraphics[width=0.4\columnwidth]{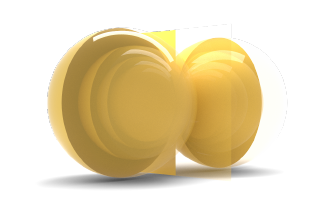}
\end{figure}
Up to a suitable M{\"o}bius transformation we
may assume that they are parallel planes defined by the same normal
vector \(n\in\RR^3\):
\[
  \hmgmatrix{S}_i = \begin{pmatrix}
    n & 2\lambda_i \\ 0 & -n
  \end{pmatrix}
\]
for distinct \(\lambda_i\in\RR\). Here \(\lambda_i\) specifies the planes as the \(\lambda_i\) level sets of \(\langle n,\cdot\rangle\). 
Therefore, 
\[
  \hmgmatrix{S}_1\hmgmatrix{S}_2 = \begin{pmatrix}
    -1 & 2(\lambda_2 - \lambda_1)n \\ 0 & -1
  \end{pmatrix} = -\hmgmatrix{I} + \hat{\hmgmatrix{n}}.
\]
Here \(\hat{\hmgmatrix{n}}\in\mathcal{V}_{\infty}\) is already an infinitesimal translation since the shared intersection point is at infinity. Since
\(\hat{\hmgmatrix{n}}\) is nilpotent
\[
  \exp(\theta \hat{\hmgmatrix{n}}) = \hmgmatrix{I} + \theta\hat{\hmgmatrix{n}} = \begin{pmatrix}
    1 & 2\theta(\lambda_2 - \lambda_1)n \\ 0 & 1
  \end{pmatrix}
\]
is a translation by \(2\theta(\lambda_2 - \lambda_1)\) in the direction from the plane \(\hmgmatrix{S}_1\) to \(\hmgmatrix{S}_2\). In particular, \(\exp\tfrac{\hat{\hmgmatrix{n}}}{2}\) describes the translation mapping \(\hmgmatrix{S}_1\) to \(\hmgmatrix{S}_2\) and so we have shown 
\begin{proposition}
  Let \(\hmgmatrix{S}_1,\hmgmatrix{S}_2\in\Spheres\) be spheres intersecting in a single point \(p\in S^3\) with the same oriented normal there. Then 
  \[
    \langle \hmgmatrix{S}_1, \hmgmatrix{S}_2\rangle = 1 \qquad \hmgmatrix{S}_1\times \hmgmatrix{S}_2 = \hat{\hmgmatrix{n}} \in \mathcal{V}_{p}
    \]
    and \(\hat{\hmgmatrix{n}}\) is an infinitesimal translation when \(p\) is sent to infinity and
    \(\exp\tfrac{\hat{\hmgmatrix{n}}}{2}\) is the M{\"o}bius
    transformation mapping \(\hmgmatrix{S}_1\) to \(\hmgmatrix{S}_2\)
    by orthogonal trajectories.
\end{proposition}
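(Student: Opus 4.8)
The plan is to use M\"obius invariance to place the pair in the canonical parabolic configuration of parallel planes, perform the (short) computation there, and then transfer every conclusion back by conjugation. Since \(\hmgmatrix{S}_1\) and \(\hmgmatrix{S}_2\) meet only at \(p\) and share an oriented normal there, the canonical-form discussion lets me pick a M\"obius transformation \(\hmgmatrix{A}\) sending \(p\) to \(\infty\); both spheres then become planes meeting only at the point at infinity, that is, parallel planes, and the shared oriented normal at \(p\) forces them to have one common normal vector \(n \in S^2\). After conjugating by \(\hmgmatrix{A}\) I may therefore assume \(\hmgmatrix{S}_i = \begin{pmatrix} n & 2\lambda_i \\ 0 & -n\end{pmatrix}\) with distinct \(\lambda_i \in \RR\), where \(\lambda_i\) is the level of \(\langle n,\cdot\rangle\) defining the plane.

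In this normal form the whole statement reduces to one matrix product. First I would compute \(\hmgmatrix{S}_1\hmgmatrix{S}_2 = -\hmgmatrix{I} + \hat{\hmgmatrix{n}}\), where \(\hat{\hmgmatrix{n}} = \begin{pmatrix} 0 & 2(\lambda_2-\lambda_1)n \\ 0 & 0\end{pmatrix}\). By Remark \ref{rem:anticommutator} the anticommutator of two elements of \(\RR^{4,1}\) is a scalar multiple of the identity, \(\tfrac12\{\hmgmatrix{S}_1,\hmgmatrix{S}_2\} = -\langle\hmgmatrix{S}_1,\hmgmatrix{S}_2\rangle\hmgmatrix{I}\); hence the decomposition \(\hmgmatrix{S}_1\hmgmatrix{S}_2 = -\langle\hmgmatrix{S}_1,\hmgmatrix{S}_2\rangle\hmgmatrix{I} + \hmgmatrix{S}_1\times\hmgmatrix{S}_2\) lets me read both invariants straight off: the coefficient of \(\hmgmatrix{I}\) gives \(\langle\hmgmatrix{S}_1,\hmgmatrix{S}_2\rangle = 1\), and the nilpotent remainder gives \(\hmgmatrix{S}_1\times\hmgmatrix{S}_2 = \hat{\hmgmatrix{n}}\), which lies in \(\Cotangent_\infty\) since it has the shape \(\begin{pmatrix} 0 & w \\ 0 & 0\end{pmatrix}\). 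Because \(\hat{\hmgmatrix{n}}\) is nilpotent, \(\exp(\theta\hat{\hmgmatrix{n}}) = \hmgmatrix{I} + \theta\hat{\hmgmatrix{n}}\) is the translation by \(2\theta(\lambda_2-\lambda_1)n\); at \(\theta = \tfrac12\) it translates by \((\lambda_2-\lambda_1)n\), carrying the level-\(\lambda_1\) plane to the level-\(\lambda_2\) plane along straight lines in the direction \(n\), which are orthogonal to every plane of the pencil.

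Finally I would transfer back by conjugating with \(\hmgmatrix{A}^{-1}\), using that each ingredient is M\"obius equivariant: the inner product \(-\tfrac12\tr_{\RR}(\cdot)\) is conjugation invariant; the cross product satisfies \(\hmgmatrix{A}(\hmgmatrix{S}_1\times\hmgmatrix{S}_2)\hmgmatrix{A}^{-1} = (\hmgmatrix{A}\hmgmatrix{S}_1\hmgmatrix{A}^{-1})\times(\hmgmatrix{A}\hmgmatrix{S}_2\hmgmatrix{A}^{-1})\); the spaces \(\Cotangent_p\) and \(\Cotangent_{\hmgmatrix{A}p}\) are related by conjugation by \(\hmgmatrix{A}\); and \(\exp\) commutes with conjugation. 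These four facts return exactly \(\langle\hmgmatrix{S}_1,\hmgmatrix{S}_2\rangle = 1\), \(\hmgmatrix{S}_1\times\hmgmatrix{S}_2 = \hat{\hmgmatrix{n}}\in\Cotangent_p\), and the mapping property of \(\exp\tfrac{\hat{\hmgmatrix{n}}}{2}\) for the original pair. The hard part will be the reduction rather than the arithmetic: I must make sure the canonical parallel-plane normalization is actually attainable for an arbitrary parabolic pair (not merely assumed), that the shared-orientation hypothesis is exactly what pins the inner product to \(+1\) and places \(\hat{\hmgmatrix{n}}\) in \(\Cotangent_p\), and that the orthogonal-trajectories property---being a geometric rather than purely algebraic statement---is genuinely preserved under the conjugation that undoes the normalization.
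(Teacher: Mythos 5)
Your proposal is correct and follows essentially the same route as the paper: reduce to the canonical parallel-plane form \(\hmgmatrix{S}_i = \begin{psmallmatrix} n & 2\lambda_i \\ 0 & -n\end{psmallmatrix}\), compute \(\hmgmatrix{S}_1\hmgmatrix{S}_2 = -\hmgmatrix{I} + \hat{\hmgmatrix{n}}\), and read off the inner product, the cross product, and the translation \(\exp\tfrac{\hat{\hmgmatrix{n}}}{2}\) from nilpotency. The only difference is that you make the conjugation-equivariance bookkeeping for transferring back from the canonical form explicit, which the paper leaves implicit.
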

Applying this proposition to the expression \(\hmgmatrix{Q}\) we find that 
\[
  \hmgmatrix{Q} = \hmgmatrix{I} - \hmgmatrix{S}_2\hmgmatrix{S}_1 =
  2\hmgmatrix{I} + \hmgmatrix{S}_1\times \hmgmatrix{S}_2 = 2
  \hmgmatrix{T}_{(\lambda_2 - \lambda_1)n} = 2\exp\tfrac{\hat{n}}{2}
  \]
  which shows that \(\hmgmatrix{Q}\) is the desired M{\"o}bius
  transformation mapping \(\hmgmatrix{S}_1\) to \(\hmgmatrix{S}_2\)
  via orthogonal trajectories.

\subsubsection{Hyperbolic Sphere Pencils}
Now suppose \(\hmgmatrix{S}_1,\hmgmatrix{S}_2\in\Spheres\) describe
disjoint oriented spheres  that generate a hyperbolic sphere
pencil. 
\begin{figure}[h]
  \centering
  \includegraphics[width=0.3\columnwidth]{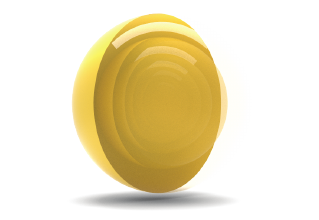}
\end{figure}
By a suitable M{\"o}bius transformation we may assume that
they are concentric spheres centered at the origin
\[\hmgmatrix{S}_i = \begin{pmatrix} 0 & \lambda_i \\
    -\tfrac{1}{\lambda_i} & 0\end{pmatrix}\] for some choice of
\(\lambda_i \in \RR\) for \(i=1,2\). The spheres they represent are
those defined by \(|x|^2 = |\lambda_i|^2\).

Note that the orientations of the two spheres must be compatible, in
the sense that the signs of \(\lambda_1\) and \(\lambda_2\) must be
equal. Otherwise there does not exist an orientation preserving
M{\"o}bius transformation mapping \(\hmgmatrix{S}_1\) to
\(\hmgmatrix{S}_2\). To avoid these undesirable configurations one
needs to assume that the spheres are oriented consistently, and for
simplicity, we will take \(\lambda_1,\lambda_2 > 0\). For disjoint
spheres that are not in canonical form, consistent orientation is
equivalent to the balls bounded by the oriented spheres intersecting in a round ball.

The product of the concentric spheres is
\[
  \hmgmatrix{S}_1\hmgmatrix{S}_2 = \begin{pmatrix} -\lambda_1/\lambda_2 & 0 \\ 0 & - \lambda_2/\lambda_1\end{pmatrix} =  \cosh\sigma\,\hmgmatrix{I} +  \sinh\sigma\,\hmgmatrix{U}
\]
with \(\sigma = \log\frac{\lambda_2}{\lambda_1}\) and where \(\hmgmatrix{U}\in\PointPairs\) is
\[
  \hmgmatrix{U} = \begin{pmatrix}
    1 & 0 \\ 0 & -1
  \end{pmatrix},
  \]
  describing the \((0,\infty)\) point pair. 
\begin{proposition}
  Let \(\hmgmatrix{S}_1,\hmgmatrix{S}_2\in\Spheres\) be disjoint oriented spheres. If the balls bounded by the oriented spheres have non-empty intersection then they generate a hyperbolic sphere pencil defined by a pair of points \(\hmgmatrix{U}\in\PointPairs\) and
  \[
    \langle \hmgmatrix{S}_1, \hmgmatrix{S}_2\rangle = \cosh\sigma \qquad \hmgmatrix{S}_1\times \hmgmatrix{S}_2 = \sinh\sigma~\hmgmatrix{U}
    \]
    for some \(\sigma\in\RR\) and \(\exp(\tfrac\sigma2 \hmgmatrix{U})\) is the M{\"o}bius transformation mapping \(\hmgmatrix{S}_1\) to \(\hmgmatrix{S}_2\) by orthogonal trajectories. 
\end{proposition}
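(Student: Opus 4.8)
The plan is to follow the template already used for the elliptic and parabolic pencils, specializing the canonical-form reduction established above. After a M\"obius transformation we may assume that \(\hmgmatrix{S}_1,\hmgmatrix{S}_2\) are the concentric spheres centered at the origin with radii \(\lambda_1,\lambda_2>0\) displayed above, so the preliminary computation puts \(\hmgmatrix{S}_1\hmgmatrix{S}_2\) in the form \(-\cosh\sigma\,\hmgmatrix{I} + \sinh\sigma\,\hmgmatrix{U}\) with \(\sigma = \log(\lambda_2/\lambda_1)\) and \(\hmgmatrix{U}\) the \((0,\infty)\) point pair. First I would read off the inner and cross products by splitting this product into its symmetric (anticommutator) and antisymmetric (commutator) halves: the symmetric half is a real multiple of \(\hmgmatrix{I}\), so Remark~\ref{rem:anticommutator} identifies it with \(-\langle\hmgmatrix{S}_1,\hmgmatrix{S}_2\rangle\hmgmatrix{I}\) and yields \(\langle\hmgmatrix{S}_1,\hmgmatrix{S}_2\rangle = \cosh\sigma\), while the antisymmetric half is exactly \(\hmgmatrix{S}_1\times\hmgmatrix{S}_2 = \sinh\sigma\,\hmgmatrix{U}\).

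Next I would compute the exponential in closed form. Since \(\hmgmatrix{U}\in\PointPairs\) satisfies \(\hmgmatrix{U}^2 = \hmgmatrix{I}\), summing the exponential series gives \(\exp(\theta\hmgmatrix{U}) = \cosh\theta\,\hmgmatrix{I} + \sinh\theta\,\hmgmatrix{U}\), which in the canonical frame is the diagonal stretch rotation with entries \(e^{\theta}\) and \(e^{-\theta}\). A direct matrix multiplication then verifies \(\exp(\tfrac\sigma2\hmgmatrix{U})\,\hmgmatrix{S}_1\,\exp(-\tfrac\sigma2\hmgmatrix{U}) = \hmgmatrix{S}_2\), so the scaling flow carries \(\hmgmatrix{S}_1\) to \(\hmgmatrix{S}_2\) through the concentric family \(\hmgmatrix{S}_t = \exp(\tfrac{t\sigma}{2}\hmgmatrix{U})\,\hmgmatrix{S}_1\,\exp(-\tfrac{t\sigma}{2}\hmgmatrix{U})\). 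Exactly as in the elliptic and parabolic cases, one can moreover rewrite \(\hmgmatrix{Q} = \hmgmatrix{I} - \hmgmatrix{S}_2\hmgmatrix{S}_1 = 2\cosh\tfrac\sigma2\,\exp(\tfrac\sigma2\hmgmatrix{U})\) to exhibit \(\hmgmatrix{Q}\), up to positive scale, as this same transformation.

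It remains to justify the orthogonal-trajectories claim, which is the conceptual heart of the statement. The vector field \(\hmgmatrix{V}_{\hmgmatrix{U}}\) generated by the \((0,\infty)\) point pair is the radial scaling field recorded in \figref{PointPairVectorField}, whose integral curves are the rays through the origin; these meet every concentric sphere orthogonally, so the trajectories of the flow are orthogonal to each sphere of the pencil, and since \(\hmgmatrix{S}_1\times\hmgmatrix{S}_2 = \sinh\sigma\,\hmgmatrix{U}\) this is the M\"obius vector field orthogonal to the whole pencil promised in the general discussion. M\"obius invariance then transports the orthogonality statement back to the original disjoint pair. The main obstacle I anticipate is not any single computation but the orientation bookkeeping: the canonical-form reduction only produces concentric spheres with \(\lambda_1,\lambda_2\) of the same sign when the oriented spheres bound balls with non-empty intersection, and it is precisely this hypothesis that guarantees the connecting map is orientation preserving, i.e.\ an element of \(\Sp\) rather than the other component of \(\Mob(3)\). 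Care is needed to check that the hypothesis survives the reduction to canonical form and that \(\sigma\), hence the direction of the flow, is well defined by the chosen orientations.
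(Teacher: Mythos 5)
Your proposal is correct and follows essentially the same route as the paper: reduce to concentric spheres by the canonical-form construction, split \(\hmgmatrix{S}_1\hmgmatrix{S}_2\) into its anticommutator and commutator halves to read off \(\cosh\sigma\) and \(\sinh\sigma\,\hmgmatrix{U}\), exponentiate \(\hmgmatrix{U}\) using \(\hmgmatrix{U}^2=\hmgmatrix{I}\), and identify \(\hmgmatrix{Q}=\hmgmatrix{I}-\hmgmatrix{S}_2\hmgmatrix{S}_1\) with \(2\cosh\tfrac\sigma2\exp(\tfrac\sigma2\hmgmatrix{U})\). Your sign \(\hmgmatrix{S}_1\hmgmatrix{S}_2=-\cosh\sigma\,\hmgmatrix{I}+\sinh\sigma\,\hmgmatrix{U}\) is the one consistent with the convention \(\tfrac12\{\hmgmatrix{A},\hmgmatrix{B}\}=-\langle\hmgmatrix{A},\hmgmatrix{B}\rangle\hmgmatrix{I}\), and your attention to the orientation hypothesis matches the paper's remark that the balls intersecting is equivalent to \(\lambda_1,\lambda_2\) having the same sign.
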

Applying this proposition to the expression \(\hmgmatrix{Q}\) we find that 
\[
  \hmgmatrix{Q} = \hmgmatrix{I} - \hmgmatrix{S}_2\hmgmatrix{S}_1 = (1
  + \cosh\sigma)\hmgmatrix{I} + \sinh\sigma\,\hmgmatrix{U} = 2
  \cosh\tfrac\sigma2 ( \cosh\tfrac\sigma2\, \hmgmatrix{I} +
  \sinh\tfrac\sigma2\, \hmgmatrix{U}) =
    2\cosh\tfrac\sigma2\exp(\tfrac\sigma2 \hmgmatrix{U}).
\]
This shows that \(\hmgmatrix{Q}\) is the desired M{\"o}bius
transformation that maps \(\hmgmatrix{S}_1\) to \(\hmgmatrix{S}_2\)
via orthgonal trajectories.

\subsection{M{\"o}bius Spheres Algebra}
\label{sec:MobiusSpheresAlgebra}
After the consideration of products of spheres in the previous section
we now consider products of circles and point pairs.  

\begin{figure}[h]
  \includegraphics[width=\columnwidth]{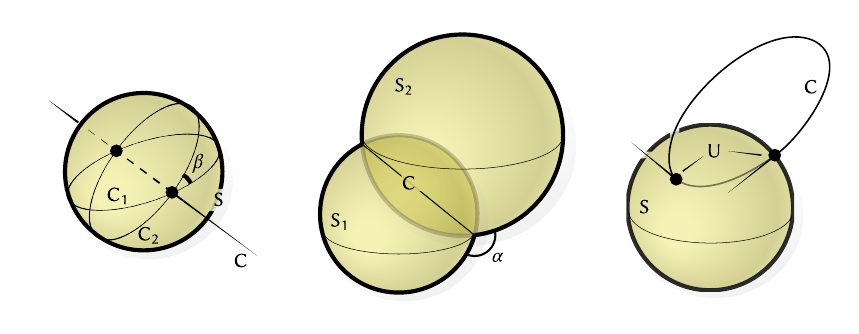}
  \caption{Main operations in the algebra of circles, spheres, and point pairs induced by the realizations into \(\HH^{2\times 2}\).}
\end{figure}
The following result follows from the same reasoning used to prove \prpref{sphereprod}.
\begin{proposition}
  Let \(\hmgmatrix{C}_1,\hmgmatrix{C}_2\in\Circles\) be two circles
  that intersect in two points.
  Their product is
	\begin{equation}
    \hmgmatrix{C}_1\hmgmatrix{C}_2 = -\sang{\hmgmatrix{C}_1,\hmgmatrix{C}_2}\,\hmgmatrix{I} + \hmgmatrix{C}_1\times\hmgmatrix{C}_2
  \end{equation}
  and 
  \begin{equation}
    \sang{\hmgmatrix{C}_1,\hmgmatrix{C}_2} = \cos\beta,\qquad \hmgmatrix{C}_1\times\hmgmatrix{C}_2 = \sin\beta\,\hmgmatrix{C}
  \end{equation}
	where \(\beta\) is the intersection angle of the two circles
        and \(\hmgmatrix{C}\) is the unique circle normal to \(S\) at
        the two intersection points with \(S\) the unique sphere
        containing both  \(\hmgmatrix{C}_1\) and  \(\hmgmatrix{C}_2\). 
	\label{prp:circleprod}
\end{proposition}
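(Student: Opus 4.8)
The plan is to reproduce the argument of \prpref{sphereprod}, replacing the elliptic pencil of spheres by the pair of circles, and to invoke the Möbius invariance of every quantity in the statement to reduce to a convenient normal form in which the product becomes a plain quaternion multiplication.

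First I would reduce to normal form. Two circles meeting in two points \(P,Q\) lie on a unique common \(2\)-sphere \(S\): choosing one additional point on each circle yields four points that are not concyclic (otherwise \(\hmgmatrix{C}_1=\hmgmatrix{C}_2\)), hence span a unique \(2\)-sphere in \(S^3\), and each circle is the section of \(S\) by the plane through three of the points. Conjugating by a Möbius transformation sending \(P\) to \(\infty\) turns \(S\) into a plane and both circles into straight lines in that plane; a translation then moves \(Q\) to the origin. In this configuration each circle is a line through the origin, so by the vanishing-curvature case of the circle matrices of \secref{SpaceOfSpheres} it is represented by \(\hmgmatrix{C}_i=\begin{psmallmatrix} t_i & 0 \\ 0 & t_i\end{psmallmatrix}\) with unit tangent \(t_i\in S^2\subset\Im\HH\).

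Next I would compute. Using \(t_1 t_2 = -\sang{t_1,t_2}+t_1\times t_2 = -\cos\beta+\sin\beta\,s\), where \(s\) is the unit vector along \(t_1\times t_2\) and \(\beta\) is the angle between the lines (equal to the intersection angle of the circles, as Möbius maps preserve angles), I get \(\hmgmatrix{C}_1\hmgmatrix{C}_2 = \begin{psmallmatrix} t_1 t_2 & 0 \\ 0 & t_1t_2\end{psmallmatrix}\). Separating symmetric and antisymmetric parts gives \(\tfrac12\{\hmgmatrix{C}_1,\hmgmatrix{C}_2\}=-\cos\beta\,\hmgmatrix{I}\) and \(\hmgmatrix{C}_1\times\hmgmatrix{C}_2=\sin\beta\,\hmgmatrix{C}\) with \(\hmgmatrix{C}=\begin{psmallmatrix} s & 0 \\ 0 & s\end{psmallmatrix}\in\Circles\), while \(\sang{\hmgmatrix{C}_1,\hmgmatrix{C}_2}=-\tfrac12\tr_\RR(\hmgmatrix{C}_1\hmgmatrix{C}_2)=-\Re(t_1t_2)=\cos\beta\). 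The universal identity \(\hmgmatrix{A}\hmgmatrix{B}=\tfrac12\{\hmgmatrix{A},\hmgmatrix{B}\}+\hmgmatrix{A}\times\hmgmatrix{B}\) then produces the first displayed equation. Geometrically \(\hmgmatrix{C}\) is the line through the origin in direction \(s\), orthogonal to the plane \(S\) and passing through both \(0\) and \(\infty\), i.e. the unique circle normal to \(S\) at the two intersection points.

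Finally I would undo the normalization. Since \(\tr_\RR\) is conjugation invariant and the commutator is conjugation covariant, the inner product \(\sang{\cdot}\), the cross product, the matrix product \(\hmgmatrix{C}_1\hmgmatrix{C}_2\), the angle \(\beta\), and the normal circle \(\hmgmatrix{C}\) all transform compatibly under conjugation by the chosen element of \(\Mob(3)\); hence the three identities, established in the normal form, hold for the original circles. The only step needing real attention is the reduction to normal form—verifying that the two circles share a common \(2\)-sphere and that a single Möbius transformation straightens both into coplanar lines through a common point; once this is in place the computation and the invariance argument run exactly parallel to \prpref{sphereprod}.
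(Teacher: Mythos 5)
Your proof is correct and follows essentially the same route the paper intends: the paper simply states that \prpref{circleprod} "follows from the same reasoning used to prove \prpref{sphereprod}," namely normalizing by a M\"obius transformation so that both circles become lines through the origin (diagonal matrices with imaginary-quaternion tangents), computing the quaternion product \(t_1t_2=-\cos\beta+\sin\beta\,s\), and appealing to conjugation invariance. You have merely made explicit the details the paper leaves implicit (uniqueness of the common sphere and the identification of the normal circle), all of which check out.
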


\begin{proposition}
	Consider a sphere \(\hmgmatrix{S}\in\Spheres{}\) along with a pair of points \(\hmgmatrix{U}\in\PointPairs{}\) contained in \(\hmgmatrix{S}\). 
	Then the normal circle \(\hmgmatrix{C}\in\Circles{}\) to the sphere at the two points is equal to the product of the sphere and the point pair
	\begin{equation}
		\hmgmatrix{C} = \hmgmatrix{S} \hmgmatrix{U} = \hmgmatrix{U} \hmgmatrix{S}.
  \end{equation}
	Moreover, \(\hmgmatrix{C}\hmgmatrix{S} = \hmgmatrix{S}\hmgmatrix{C} = -\hmgmatrix{U}\) and \(\hmgmatrix{S} = \hmgmatrix{C}\hmgmatrix{U} = \hmgmatrix{U}\hmgmatrix{C}\).
	\label{prp:circlesphereprod}
\end{proposition}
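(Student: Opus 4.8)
The plan is to exploit the M\"obius invariance of all the claimed identities. Conjugation by any \(\hmgmatrix{A}\in\Mob(3)\) is an algebra homomorphism, i.e. \((\hmgmatrix{A}\hmgmatrix{S}\hmgmatrix{A}^{-1})(\hmgmatrix{A}\hmgmatrix{U}\hmgmatrix{A}^{-1}) = \hmgmatrix{A}(\hmgmatrix{S}\hmgmatrix{U})\hmgmatrix{A}^{-1}\), so each product in the statement transforms by conjugation, and the identifications of \(\Spheres\), \(\Circles\), \(\PointPairs\) with their geometric objects are likewise conjugation-equivariant. Hence it suffices to verify everything in one canonical configuration. I would pick a M\"obius transformation sending the two points of the pair to \(0\) and \(\infty\), so that \(\hmgmatrix{U}\) becomes the \((0,\infty)\) point pair \(\hmgmatrix{U} = \begin{psmallmatrix}1 & 0\\ 0 & -1\end{psmallmatrix}\) from the hyperbolic-pencil discussion.

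Next I would use the hypothesis that the point pair lies on \(\hmgmatrix{S}\). Since the two points are now \(0\) and \(\infty\), the sphere passes through both the origin and infinity and is therefore a plane through the origin, which by the sphere representation (with \(h=0\), \(p=0\)) takes the form \(\hmgmatrix{S} = \begin{psmallmatrix}n & 0\\ 0 & -n\end{psmallmatrix}\) for a unit normal \(n\in S^2\subset\Im\HH\). The normal circle \(\hmgmatrix{C}\) at the two points is the circle through \(0\) and \(\infty\) meeting the plane orthogonally, i.e. the line through the origin in the direction \(n\); by the circle representation (tangent \(t=n\), curvature \(\kappa=0\)) this is \(\hmgmatrix{C} = \begin{psmallmatrix}n & 0\\ 0 & n\end{psmallmatrix}\). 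A one-line matrix multiplication then gives \(\hmgmatrix{S}\hmgmatrix{U} = \hmgmatrix{U}\hmgmatrix{S} = \begin{psmallmatrix}n & 0\\ 0 & n\end{psmallmatrix} = \hmgmatrix{C}\), which establishes the central identity together with the commutativity of \(\hmgmatrix{S}\) and \(\hmgmatrix{U}\).

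The remaining identities I would then read off algebraically, using only \(\hmgmatrix{S}^2 = -\hmgmatrix{I}\) and \(\hmgmatrix{U}^2 = \hmgmatrix{I}\) from the definitions of \(\Spheres\) and \(\PointPairs\). From \(\hmgmatrix{C} = \hmgmatrix{S}\hmgmatrix{U} = \hmgmatrix{U}\hmgmatrix{S}\) one obtains \(\hmgmatrix{S}\hmgmatrix{C} = \hmgmatrix{S}^2\hmgmatrix{U} = -\hmgmatrix{U}\) and \(\hmgmatrix{C}\hmgmatrix{S} = \hmgmatrix{U}\hmgmatrix{S}^2 = -\hmgmatrix{U}\), as well as \(\hmgmatrix{C}\hmgmatrix{U} = \hmgmatrix{S}\hmgmatrix{U}^2 = \hmgmatrix{S}\) and \(\hmgmatrix{U}\hmgmatrix{C} = \hmgmatrix{U}^2\hmgmatrix{S} = \hmgmatrix{S}\). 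One can also confirm \(\hmgmatrix{C}\in\Circles\) independently and without reference to the canonical form, since \(\hmgmatrix{C}^* = \hmgmatrix{U}^*\hmgmatrix{S}^* = -\hmgmatrix{U}\hmgmatrix{S} = -\hmgmatrix{C}\) and \(\hmgmatrix{C}^2 = \hmgmatrix{S}\hmgmatrix{U}\hmgmatrix{S}\hmgmatrix{U} = \hmgmatrix{S}^2\hmgmatrix{U}^2 = -\hmgmatrix{I}\), using commutativity.

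The step I expect to require the most care is matching the sign and orientation conventions: I must confirm that the product \(\hmgmatrix{S}\hmgmatrix{U}\) yields the normal circle with its induced orientation rather than the reverse \(-\hmgmatrix{C}\), and, more conceptually, that algebraic commutativity \(\hmgmatrix{S}\hmgmatrix{U} = \hmgmatrix{U}\hmgmatrix{S}\) is precisely the encoding of the geometric hypothesis that the two fixed points of \(\hmgmatrix{U}\) lie on \(\hmgmatrix{S}\). I would pin this down in the canonical configuration by writing a general sphere as \(\hmgmatrix{S} = \begin{psmallmatrix} a & \beta\\ \gamma & -a\end{psmallmatrix}\) and checking that the eigenlines of \(\hmgmatrix{U}\) satisfy \(\sang{\hmgmatrix{S}\psi,\psi}=0\) exactly when \(\beta=\gamma=0\), which is exactly the condition \(\hmgmatrix{S}\hmgmatrix{U} = \hmgmatrix{U}\hmgmatrix{S}\); this simultaneously forces \(\hmgmatrix{S}\) to be a plane through the origin and fixes the orientation of the resulting normal circle.
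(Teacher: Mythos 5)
Your proof is correct and follows essentially the same route as the paper's: both rest on the commutativity of \(\hmgmatrix{S}\) and \(\hmgmatrix{U}\) forced by the incidence condition, the algebraic characterization of \(\Circles\) via \((\hmgmatrix{S}\hmgmatrix{U})^* = -\hmgmatrix{S}\hmgmatrix{U}\) and \((\hmgmatrix{S}\hmgmatrix{U})^2 = -\hmgmatrix{I}\), and identifying the product as the normal circle by reading the tangent/normal data off the matrix entries. The only difference is presentational: you normalize to the \((0,\infty)\) configuration and explicitly verify the commutativity (and its equivalence with the incidence hypothesis) that the paper's proof merely asserts as straightforward.
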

\begin{proof}
  It is straightforward to verify that \(\hmgmatrix{S}\) and
  \(\hmgmatrix{C}\) commute with \(\hmgmatrix{U}\) since the sphere
  and circle go through the points described by
  \(\hmgmatrix{U}\). From \(\hmgmatrix{S}^* = \hmgmatrix{S}\) and
  \(\hmgmatrix{U}^* = -\hmgmatrix{U}\) we deduce that
  \((\hmgmatrix{S}\hmgmatrix{U})^* = -\hmgmatrix{S}\hmgmatrix{U}\) and
  that \((\hmgmatrix{S}\hmgmatrix{U})^2 = -\hmgmatrix{I}\), and so
  \(\hmgmatrix{S}\hmgmatrix{U}\in\Circles\) and describes a circle
  going through the points \(i\) and \(j\). Since the diagonal entries
  of the matrix representation of a sphere describe its normal
  vectors, whereas the diagonal entries of a circle describe its
  tangent vectors, we conclude that
  \(\hmgmatrix{C} = \hmgmatrix{S}\hmgmatrix{U}\) is the desired normal
  circle orthgonal to \(\hmgmatrix{S}\) through the oriented point
  pair \(\hmgmatrix{U}\).
\end{proof}

\section{Rolling Sphere Connections}
\label{sec:RollingSphereConnections}

With the quaternionic description of spheres in \(S^3\) in place, it
is easy to describe the geometry of a rolling sphere congruence over a
surface. In this section we give the geometric interpretation of
both the smooth and discrete Willmore energies as the curvature of a
connection obtained by rolling spheres over the surface.

Let \(f : M \to \RR^3\) be an immersion of a compact orientable
Riemann surface.  The mean curvature sphere congruence
\(\hmgmatrix{S} : M\to\Spheres\) of the immersion is defined to be
\begin{equation}
  \hmgmatrix{S} = \hmgmatrix{T}_{f}\begin{pmatrix}
    n & 0 \\ -H & -n
  \end{pmatrix}\hmgmatrix{T}_{f}^{-1}
\end{equation}
where \(H\) is the mean curvature of the immersion, and
\(n : M \to S^2\) is the normal of the immersion.  The tangent plane
congruence \(\hmgmatrix{P} : M \to \Spheres\) is given by
\begin{equation} 
  \hmgmatrix{P} = \hmgmatrix{T}_{f}\begin{pmatrix}n & 0 \\ 0 & -n\end{pmatrix}\hmgmatrix{T}_{f}^{-1}.
\end{equation}
Consider the connections
\(\nabla^\hmgmatrix{P} = d -
\tfrac{1}{2}\hmgmatrix{P}\,d{\mkern+1mu}\hmgmatrix{P}\) and
\(\nabla^{\hmgmatrix{S}} = d -
\tfrac{1}{2}\hmgmatrix{S}\,d{\mkern+1mu}\hmgmatrix{S}\) on the trivial
\(\HH^2\)-bundle over \(M\).

The tangent plane congruence is parallel with respect to
\(\nabla^{\hmgmatrix{P}}\)
(\(\nabla^\hmgmatrix{P}\, \hmgmatrix{P} = 0\)) and so the trajectories
of the induced parallel transport move orthogonally to the tangent
planes (see \thmref{OrthogonalTrajectories}). The well-known fact that the curvature of the Levi-Civita
connection yields the Gauss curvature form finds its expression in
the curvature of \(\nabla^{\hmgmatrix{P}}\):
\[ R(\nabla^\hmgmatrix{P}) = -K\hmgmatrix{N}\,\sigma_{f}.\] {Since
  this is an extrinsic description, a rotation about the normal line
  \(\hmgmatrix{N}\) arises as well.}

To extend this picture to the mean curvature spheres we just need to
consider the connection
\(\nabla^{\hmgmatrix{S}} = d
-\tfrac{1}{2}\hmgmatrix{S}\,d{\mkern+1mu}\hmgmatrix{S}\) instead. Just
as before, the mean curvature spheres are parallel with respect to
\(\nabla^{\hmgmatrix{S}}\)
(\(\nabla^{\hmgmatrix{S}}\,\hmgmatrix{S} = 0\)) and so the
trajectories of the parallel transport of \(\nabla^{\hmgmatrix{S}}\)
are orthogonal to the mean curvature spheres. The curvature of
\(\nabla^{\hmgmatrix{S}}\) will now compute the Willmore integrand
\begin{equation}
  R(\nabla^{\hmgmatrix{S}}) = (H^2-K){\hmgmatrix{N}_{\hmgmatrix{S}}}\,\sigma_{f}.
\end{equation} 
{This extrinsic description of the Willmore integrand comes multiplied with \(\hmgmatrix{N}_{\hmgmatrix{S}}\), describing the rotation around a normal circle to the immersion.}

\subsection{Smooth Theory}
\label{sec:SmoothTheory}
To study the connections \(\nabla^{\hmgmatrix{P}}\) and \(\nabla^{\hmgmatrix{S}}\) simultaneously we consider the connection \(\nabla^{\Sigma} = d - \tfrac{1}{2}\Sigma\,d\Sigma\) induced by an arbitrary, but otherwise fixed, tangent sphere congruence \(\Sigma : M \to \Spheres\). Such a sphere congruence is of the form
\begin{equation}
  \Sigma = \hmgmatrix{T}_{f}\begin{pmatrix}
    n & 0 \\ -h & -n
  \end{pmatrix}\hmgmatrix{T}_{f}^{-1}
\end{equation}
for some smooth function \(h : M\to\RR\). 
Since \(-\tfrac{1}{2}\Sigma\,d\Sigma\in\Omega^1(M;\sp)\) the parallel transport of \(\nabla^{\Sigma}\) along a path \(\gamma : [0,1] \to M\) is a M{\"o}bius transformation \(P_{\gamma} \in \Sp\) of \(S^3\). 
\begin{figure}[h]
  \includegraphics[width=\columnwidth]{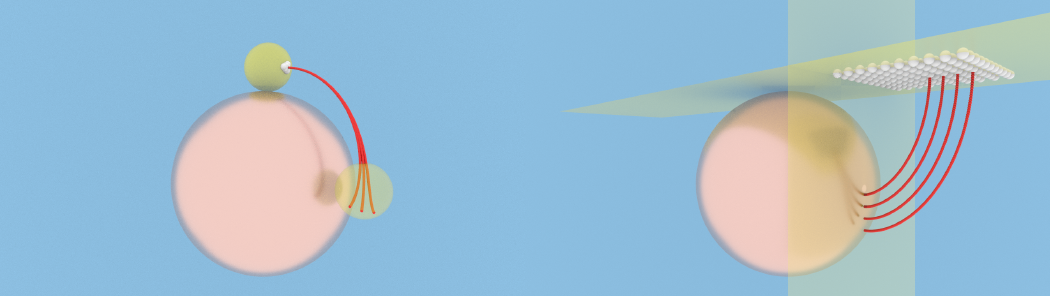}
  \caption{Using the connection \(\nabla^{\Sigma}\) to identify infinitesimally close spheres of \(\Sigma\) by M{\"o}bius transformations we find that the parallel transport of points in \(S^3\) follows trajectories that are orthogonal to the sphere congruence. Visualized in red is the parallel transport of three points obtained by rolling the tangent plane congruence (\figloc{right}) or a tangent sphere congruence (\figloc{left}) over a sphere.}
  \label{fig:OrthogonalTrajectories}
\end{figure}
\begin{proposition}
  The sphere congruence \(\Sigma\) is parallel with respect to \(\nabla^{\Sigma}\).
  \label{prp:SigmaParallel}
\end{proposition}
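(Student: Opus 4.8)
The plan is to compute $\nabla^{\Sigma}\Sigma$ directly, reading $\Sigma$ as a section of the endomorphism bundle $\End(\HH^2)$ over $M$. Since the connection on the trivial $\HH^2$-bundle is $\nabla^{\Sigma} = d - \tfrac12\,\Sigma\,d\Sigma$, the induced covariant derivative of an endomorphism-valued field is obtained by bracketing the connection $1$-form against the field, so that
\[
  \nabla^{\Sigma}\Sigma = d\Sigma + \big[-\tfrac12\,\Sigma\,d\Sigma,\ \Sigma\big] = d\Sigma - \tfrac12\,\Sigma\,d\Sigma\,\Sigma + \tfrac12\,\Sigma^2\,d\Sigma.
\]
This is the expression I must show vanishes.

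The two ingredients that collapse everything both come from the defining relation $\Sigma\in\Spheres$, namely $\Sigma^2 = -\hmgmatrix{I}$. First, this rewrites the last term immediately as $\tfrac12\,\Sigma^2\,d\Sigma = -\tfrac12\,d\Sigma$. Second, differentiating the constant relation $\Sigma^2 = -\hmgmatrix{I}$ gives $d\Sigma\,\Sigma + \Sigma\,d\Sigma = 0$; using this to commute $\Sigma$ past $d\Sigma$ in the middle term yields $\Sigma\,d\Sigma\,\Sigma = -\Sigma^2\,d\Sigma = d\Sigma$, so the middle term is likewise $-\tfrac12\,d\Sigma$. Substituting, the three contributions read $d\Sigma - \tfrac12\,d\Sigma - \tfrac12\,d\Sigma = 0$, which is exactly the assertion $\nabla^{\Sigma}\Sigma = 0$.

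There is no serious obstacle here; the single point requiring care is bookkeeping under noncommutativity, since the matrix entries live in $\HH$ and one must not reorder factors, only use the two displayed identities. It is worth noting that the same pair of facts, $d(\Sigma^2)=0$ together with $\Sigma^*=\Sigma$, is precisely what guarantees $-\tfrac12\,\Sigma\,d\Sigma\in\Omega^1(M;\sp)$, so this computation simultaneously reconfirms that $\nabla^{\Sigma}$ is a genuine M{\"o}bius connection, as claimed in the text preceding the statement.
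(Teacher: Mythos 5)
Your proof is correct and follows essentially the same route as the paper: the paper evaluates \((\nabla^{\Sigma}\Sigma)\psi\) on a test section while you use the equivalent formula \(\nabla^{\Sigma}A = dA + [\omega,A]\) for the induced connection on \(\End(\HH^2)\), but both arguments reduce to the identical pair of identities \(\Sigma^2=-\hmgmatrix{I}\) and \(d\Sigma\,\Sigma+\Sigma\,d\Sigma=0\). Nothing further is needed.
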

\begin{proof}
  Recall that the covariant derivative of an endomorphism field \(A\in\Gamma\End\HH^2\) is defined naturally by \((\nabla^{\Sigma}A)\psi \coloneqq \nabla^{\Sigma}(A\psi) - A\nabla^{\Sigma}\psi.\) 
  For a nowhere vanishing section \(\psi:M\to\HH^2\)  
	\begin{align}
		(\nabla^{\Sigma}\Sigma)\psi & = \nabla^{\Sigma}(\Sigma\psi) - \Sigma\nabla^{\Sigma}\psi \\ 
    & = \big(d-\tfrac{1}{2}\Sigma\,d{\mkern+1mu}\Sigma\big)(\Sigma\psi) - \Sigma\big(d-\tfrac{1}{2}\Sigma\,d{\mkern+1mu}\Sigma\big)\psi 
    \\ & = d(\Sigma\psi) - \Sigma\,d\psi - \tfrac{1}{2}d{\mkern+1mu}\Sigma\,\psi - \tfrac{1}{2}d{\mkern+1mu}\Sigma\,\psi 
    \\ & = 0.
	\end{align}
\end{proof}
The geometry of \(\nabla^{\Sigma}\) is described by the following result.
\begin{theorem}
    Let \(\Sigma\) be a sphere congruence over an interval \([0,1]\) and \(\nabla^\Sigma\) be the connection on \([0,1]\times \mathbb H^2\) given by \(\nabla^\Sigma = d - \tfrac{1}{2}\Sigma\,d\Sigma\). Furthermore, let \(\psi\colon [0,1]\to \mathbb H^2\) be parallel such that \(\psi_0\) lies on \(\Sigma_0\). Then \(\psi_t\) lies on \(\Sigma_t\) for all \(t\) and the trajectory intersects the spheres orthogonally.
  \label{thm:OrthogonalTrajectories}
\end{theorem}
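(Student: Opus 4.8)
The plan is to prove the two claims separately, the common ingredient being that $\nabla^{\Sigma}$ is compatible with the Hermitian form $\sang{\cdot,\cdot}$. First I would check that the connection $1$-form $-\tfrac12\Sigma\,d\Sigma$ takes values in $\sp$: using $\Sigma^{*}=\Sigma$ (hence $(d\Sigma)^{*}=d\Sigma$) and $\Sigma^{2}=-\hmgmatrix{I}$ one computes
$\bigl(-\tfrac12\Sigma\,d\Sigma\bigr)^{*}+\bigl(-\tfrac12\Sigma\,d\Sigma\bigr)=-\tfrac12\bigl(d\Sigma\,\Sigma+\Sigma\,d\Sigma\bigr)=-\tfrac12\,d(\Sigma^{2})=0$.
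Consequently the parallel transport of $\nabla^{\Sigma}$ lies in $\Sp$ and preserves $\sang{\cdot,\cdot}$, so $\sang{\phi,\chi}$ is constant in $t$ for any pair of $\nabla^{\Sigma}$-parallel sections $\phi,\chi$.

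For the first assertion, recall from the characterization of $\Spheres$ that $\psi_{t}$ lies on $\Sigma_{t}$ precisely when $\sang{\Sigma_{t}\psi_{t},\psi_{t}}=0$. By \prpref{SigmaParallel} the field $\Sigma$ is itself parallel, so the same Leibniz rule used there gives $\nabla^{\Sigma}(\Sigma\psi)=(\nabla^{\Sigma}\Sigma)\psi+\Sigma\,\nabla^{\Sigma}\psi=0$; thus both $\psi$ and $\Sigma\psi$ are parallel. By the previous paragraph $\sang{\Sigma\psi,\psi}$ is therefore constant in $t$ and equal to its initial value $0$, so $\psi_{t}$ lies on $\Sigma_{t}$ for all $t$. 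Applying the same reasoning to $\sang{\psi,\psi}$ shows in addition that the trajectory stays in $S^{3}$.

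For orthogonality I would fix a time $t_{0}$ and exploit Möbius invariance: conjugating by a suitable $\hmgmatrix{A}\in\Sp$ replaces $(\Sigma,\psi)$ by $(\hmgmatrix{A}\Sigma\hmgmatrix{A}^{-1},\hmgmatrix{A}\psi)$ and carries the trajectory to its image, so I may assume $\psi_{t_{0}}=\begin{psmallmatrix}0\\1\end{psmallmatrix}$ is the origin and $\Sigma_{t_{0}}=\begin{psmallmatrix}n&0\\0&-n\end{psmallmatrix}$ is a plane through the origin with unit normal $n$. Differentiating $\nabla^{\Sigma}\psi=0$ gives $\dot\psi=\tfrac12\Sigma\dot\Sigma\,\psi$. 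The velocity $\dot\Sigma$ is self-adjoint, trace-free, and (differentiating $\Sigma^{2}=-\hmgmatrix{I}$) anticommutes with $\Sigma$, which forces $\dot\Sigma=\begin{psmallmatrix}a&\beta\\\gamma&-a\end{psmallmatrix}$ with $a\in\Im\HH$ orthogonal to $n$ and $\beta,\gamma\in\RR$. A short computation then yields $\dot\psi=\tfrac12\begin{psmallmatrix}n\beta\\na\end{psmallmatrix}$, and since the upper entry of $\psi_{t_{0}}$ vanishes while its lower entry is $1$, the velocity of the associated point in $\RR^{3}$ equals the upper entry $\tfrac12\beta n$. This is parallel to $n$, the normal of the plane $\Sigma_{t_{0}}$, so the trajectory meets the sphere orthogonally.

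The only genuinely geometric step---and the one I expect to require the most care---is the last one: pinning down precisely what orthogonality to a $2$-sphere means and confirming that the normalization loses no generality, since any point of any sphere in the congruence can be sent to the origin of a plane by an element of $\Sp$. Everything else reduces to the metric compatibility of $\nabla^{\Sigma}$ together with the same Leibniz computation already carried out in \prpref{SigmaParallel}.
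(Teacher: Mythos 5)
Your proof is correct, but it reaches both conclusions by a route that differs from the paper's in an interesting way. For the incidence claim the paper does not invoke metric compatibility at all: it observes that $\Sigma\psi_0=-\psi_0 n_0$ at $t=0$ and then applies uniqueness of ODE solutions to the parallel quantity $\Sigma\psi+\psi n_0$, concluding the stronger pointwise identity $\Sigma_t\psi_t=-\psi_t n_0$ with the \emph{same fixed} quaternion $n_0$ for all $t$. Your argument instead notes that the connection form is $\sp$-valued, so parallel transport lies in $\Sp$ and conserves $\sang{\Sigma\psi,\psi}$ and $\sang{\psi,\psi}$; this is cleaner and more conceptual (it works verbatim for any parallel self-adjoint congruence), but it yields only the incidence relation, not the constancy of $n_0$. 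That extra information is precisely what the paper then exploits for orthogonality: differentiating $\Sigma\psi=-\psi n_0$ and substituting $\psi'=\tfrac12\Sigma\Sigma'\psi$ gives $\Sigma\psi'=\psi' n_0$, whose first row forces $f_0'$ to commute with, hence be a multiple of, $n_0$ --- and this works with $\Sigma_{t_0}$ an arbitrary sphere through the origin, with no need to flatten it. Lacking the eigenvector relation, you compensate by normalizing $\Sigma_{t_0}$ to a plane and computing $\dot\psi=\tfrac12\Sigma\dot\Sigma\,\psi$ explicitly from the description of $T_{\Sigma}\Spheres$ inside $\RR^{4,1}$ (self-adjoint, trace-free, anticommuting with $\Sigma$); your computation is correct, including the identification of the point velocity with the upper entry $\tfrac12\beta n$ since $\psi_{t_0}=\begin{psmallmatrix}0\\1\end{psmallmatrix}$. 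The two caveats you flag --- that incidence requires checking $\sang{\psi,\psi}=0$ is also conserved, and that the normalization to a plane through the origin loses no generality --- are both handled adequately, so there is no gap; the trade-off is simply conceptual transparency in your first step versus the economy of the paper's single conserved eigenvector equation serving both halves of the theorem.
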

\begin{proof}
    Since \(\psi_0\) lies on \(\Sigma_0\) we have that \(\Sigma_0\psi_0 = -\psi_0 n_0\) where \(n_0\) is the normal of \(\Sigma_0\) at \(\psi_0\). \(\Sigma\) is parallel by \prpref{SigmaParallel}. Since \(\psi\) is also parallel 
    \begin{equation}
        \nabla^\Sigma_{\frac{\partial}{\partial t}}(\Sigma\psi + \psi n_0) = 0
    \end{equation}
    and so by the existence and uniqueness of ODEs \(\Sigma_t\psi_t = -\psi_t n_0\) for all \(t\in[0,1]\). 

    To simplify subsequent
    computations we translate everything so that for \(t=0\),
    with \[\psi_t = \begin{pmatrix}{f_t}\\{1}\end{pmatrix}\mu_t\qquad\text{we have}\qquad \psi_0 = \begin{pmatrix}{0}\\{1}\end{pmatrix}\]
    where
    \(f:[0,1]\to\RR^3\) with \(f_0=0\).
    To extract the geometric properties of the trajectory in terms of the curve \(f\) we need to determine how \(\Sigma\) acts on \(\psi'\).
    If we differentiate in time 
    \begin{equation}
      \Sigma'\psi + \Sigma\psi' = (\Sigma\psi)' = \psi'(-n_0).
      \label{eq:o1}
    \end{equation}
    Since \(\psi\) is parallel\begin{equation}
      0 =  \psi' - \tfrac12\Sigma\Sigma'\psi \implies \Sigma'\psi = -2\Sigma\psi',
    \end{equation} 
    and hence by \eqref{o1} \(\Sigma\psi' = \psi'n_0\) . The first row of \(\Sigma_0\psi_0' = \psi_0'n_0\) reads \(n_0f_0' = f_0' n_0\). Therefore, \(f_0'\) is a scalar multiple of \(n_0\). Since the point \(t=0\) was arbitrary this shows that the trajectory \(\psi_t\) traced out by parallel transport of \(\nabla^{\Sigma}\) is orthogonal to \(\Sigma_t\) .
  \end{proof}

The curvature tensor \(\hmgmatrix{R}(\nabla^\Sigma)\in\Omega^2(M;\sp)\) is straightforward to compute from the connection 1-form  \(\nabla^{\Sigma} - d = -\tfrac12\Sigma\,d\Sigma\). 
\begin{proposition}
  \begin{equation*}
    \hmgmatrix{R}(\nabla^{\Sigma}) = \tfrac12\hmgmatrix{T}_{f}\begin{pmatrix}
      ((H^2-K) - (H-h)^2)n\sigma_{f} & 0 \\ 
      dh\wedge((H-h)df + q) & ((H^2-K) - (H-h)^2)n\sigma_{f}
    \end{pmatrix}\hmgmatrix{T}_{f}^{-1}
\end{equation*}
	\label{prp:RollingSpheresCurvature}
\end{proposition}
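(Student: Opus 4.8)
The plan is to compute the curvature straight from the connection $1$-form $\alpha := \nabla^{\Sigma}-d = -\tfrac12\Sigma\,d\Sigma$ through $\hmgmatrix{R}(\nabla^{\Sigma}) = d\alpha + \alpha\wedge\alpha$, and only afterwards to substitute the explicit $\Sigma$. First I would exploit the defining relation $\Sigma^2 = -\hmgmatrix{I}$, whose derivative yields the anticommutation rule $\Sigma\,d\Sigma = -d\Sigma\,\Sigma$. Since $d\,d\Sigma = 0$ one has $d\alpha = -\tfrac12\,d\Sigma\wedge d\Sigma$, while $\alpha\wedge\alpha = \tfrac14\,\Sigma\,d\Sigma\wedge\Sigma\,d\Sigma = \tfrac14\,d\Sigma\wedge d\Sigma$, the last equality obtained by moving the scalar-valued (zero-form) factor $\Sigma$ across the wedge, applying $\Sigma\,d\Sigma = -d\Sigma\,\Sigma$, and collapsing $\Sigma^2 = -\hmgmatrix{I}$. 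Together these give the clean identity
\begin{equation*}
  \hmgmatrix{R}(\nabla^{\Sigma}) = -\tfrac14\,d\Sigma\wedge d\Sigma,
\end{equation*}
which is the conceptual core; everything else is the evaluation of a single quaternionic $2$-form.

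Next I would remove the translation conjugation. Writing $\Sigma = \hmgmatrix{T}_{f}\Sigma_0\hmgmatrix{T}_{f}^{-1}$ with $\Sigma_0 = \left(\begin{smallmatrix} n & 0\\ -h & -n\end{smallmatrix}\right)$ and setting $\theta := \hmgmatrix{T}_{f}^{-1}\,d\hmgmatrix{T}_{f} = \left(\begin{smallmatrix} 0 & df\\ 0 & 0\end{smallmatrix}\right)$, differentiation gives $d\Sigma = \hmgmatrix{T}_{f}\big(d\Sigma_0 + [\theta,\Sigma_0]\big)\hmgmatrix{T}_{f}^{-1}$. Because $\hmgmatrix{T}_{f}^{\pm1}$ are zero-forms, the inner copies telescope under the wedge and $d\Sigma\wedge d\Sigma = \hmgmatrix{T}_{f}\,(M\wedge M)\,\hmgmatrix{T}_{f}^{-1}$ with $M := d\Sigma_0 + [\theta,\Sigma_0]$. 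A short matrix product gives
\begin{equation*}
  M = \begin{pmatrix} dn - h\,df & -df\,n - n\,df \\ -dh & -(dn - h\,df)\end{pmatrix}.
\end{equation*}
The decisive simplification is that the upper-right entry vanishes identically: $df$ takes values in the plane orthogonal to $n$, so the imaginary quaternions $df(e)$ and $n$ anticommute and $df\,n + n\,df = 0$. Hence $M$ is lower triangular with diagonal $\pm a$, where $a := dn - h\,df$, and lower-left entry $-dh$; this forces the $(1,2)$-entry of the curvature to vanish and makes both diagonal entries of $M\wedge M$ equal to $a\wedge a$.

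Finally I would evaluate the two surviving $2$-forms on an oriented orthonormal frame $(e_1,e_2)$ for the induced metric, using $[u,v]=2\,u\times v$ for $u,v\in\Im\HH$, the normalization $df(e_1)\times df(e_2)=n$, and the Weingarten relation $dn = df\circ S$ (in the sign convention fixed by $\Sigma_0$, with $\tr S = 2H$, $\det S = K$). Expanding $a\wedge a(e_1,e_2)=[a(e_1),a(e_2)]$ into its four commutators and inserting $dn(e_1)\times dn(e_2)=K\,n$ and $dn(e_1)\times df(e_2)+df(e_1)\times dn(e_2)=2H\,n$ produces $a\wedge a = (2K-4hH+2h^2)\,n\,\sigma_{f}$, so $-\tfrac14\,a\wedge a = \tfrac12\big((H^2-K)-(H-h)^2\big)n\,\sigma_{f}$, i.e. $\tfrac12$ times the displayed diagonal entry. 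For the lower-left slot, $dh$ is scalar-valued and hence anticommutes through the wedge, giving $(M\wedge M)_{21} = -2\,dh\wedge a$; writing $a = (H-h)\,df + q$ with $q := dn - H\,df = df\circ(S - H\,\mathrm{Id})$ the trace-free second fundamental form, the entry $-\tfrac14(M\wedge M)_{21} = \tfrac12\,dh\wedge\big((H-h)df + q\big)$ emerges, again $\tfrac12$ times the claimed term. Reassembling inside the conjugation by $\hmgmatrix{T}_{f}$ yields the stated matrix. The main obstacle throughout is the noncommutative bookkeeping: every wedge, commutator, and anticommutator must respect the order of quaternionic factors, and the very nonvanishing of $a\wedge a$ is a commutator effect, so the structure identities $df\wedge df = 2n\,\sigma_f$ and its analogues must be applied with careful attention to sign and orientation.
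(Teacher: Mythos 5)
Your proposal is correct and follows essentially the same route as the paper: reduce the curvature to $-\tfrac14\,d\Sigma\wedge d\Sigma$ via $\Sigma\,d\Sigma=-d\Sigma\,\Sigma$, strip off the conjugation by $\hmgmatrix{T}_f$, observe that the upper-right entry of $d\Sigma_0+[\theta,\Sigma_0]$ vanishes because $df$ anticommutes with $n$, and expand the remaining entries. The only cosmetic difference is that you evaluate $a\wedge a$ on an orthonormal frame via the Weingarten map, whereas the paper directly invokes the identities $\tfrac12\,df\wedge df=n\,\sigma_f$, $\tfrac12\,q\wedge q=-(H^2-K)\,n\,\sigma_f$, and $df\wedge q=q\wedge df=0$ after writing $a=(H-h)\,df+q$; both yield the same coefficients.
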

\begin{proof}
  Let \(\hmgmatrix{A}\coloneqq -\tfrac{1}{2}\Sigma\,d{\mkern+1mu}\Sigma\) be the connection 1-form of \(\nabla^{\Sigma}\). The curvature of the connection is equal to 
  \begin{equation}
    \hmgmatrix{R}(\nabla^{\Sigma}) = d{\mkern+1mu}\hmgmatrix{A} + \hmgmatrix{A}\wedge\hmgmatrix{A} = -\tfrac14 d{\mkern+1mu}\Sigma\wedge d{\mkern+1mu}\Sigma.
  \end{equation}
	The derivative of \(\Sigma\) is equal to
	\begin{align}
    d{\mkern+1mu}\Sigma & = \hmgmatrix{T}_f\left[\begin{pmatrix}0 & \df \\ 0 & 0\end{pmatrix}\begin{pmatrix}n & 0 \\ -h & -n\end{pmatrix} + \begin{pmatrix} dn & 0 \\ -dh & -dn \end{pmatrix}-\begin{pmatrix}n & 0 \\ -h & -n\end{pmatrix}\begin{pmatrix}0 & \df \\ 0 & 0\end{pmatrix}\right]\hmgmatrix{T}_f^{-1} \\ &= \hmgmatrix{T}_f\left(\begin{array}{cc}(H-h)\,\df+q & 0 \\ -dh & -(H-h)\,\df-q\end{array}\right)\hmgmatrix{T}_f^{-1}\end{align}
	and since
	\begin{align}\tfrac{1}{2}\df\wedge\df &= n\,\sigma_{f} & \tfrac{1}{2}q\wedge q &= -(H^2-K)\,n\,\sigma_{f} & \df\wedge q &= q\wedge \df =0\end{align}
	we have
	\begin{align}
    -\tfrac{1}{4}d{\mkern+1mu}\Sigma\wedge d{\mkern+1mu}\Sigma
    & =
    \tfrac12\hmgmatrix{T}_{f}\begin{pmatrix}
      ((H^2-K) - (H-h)^2)n\sigma_{f} & 0 \\ 
      dh\wedge((H-h)df + q) & ((H^2-K) - (H-h)^2)n\sigma_{f}
    \end{pmatrix}\hmgmatrix{T}_{f}^{-1}
  \end{align}
\end{proof}

\begin{example}
  That the Gauss curvature can be realized as the curvature of a
  connection obtained by rolling tangent planes around follows from
  \prpref{RollingSpheresCurvature} by taking \(h \equiv 0\):
  \begin{equation}
    \hmgmatrix{R}(\nabla^{\hmgmatrix{P}}) = \tfrac12\hmgmatrix{T}_f\left(\begin{array}{cc}-Kn\,\sigma_{f} & 0 \\ 0 & -Kn\,\sigma_{f}\end{array}\right)\hmgmatrix{T}_f^{-1} = -\tfrac12K\hmgmatrix{N}\,\sigma_{f}
  \end{equation}
  where \(\hmgmatrix{N}\) describes the congruence of normal lines of $f$. 
\end{example}

\begin{example}
  Another consequence of \prpref{RollingSpheresCurvature} is that the Willmore energy is equal to the curvature of a connection obtained by rolling mean curvature spheres over the surface.
  \begin{equation}
  \hmgmatrix{R}(\nabla^{\hmgmatrix{S}})=\tfrac12\hmgmatrix{T}_f\left(\begin{array}{cc}(H^2-K)\,n\,\sigma_{f} & 0 \\ dH\wedge q & (H^2-K)\,n\,\sigma_{f} \end{array}\right)\hmgmatrix{T}_f^{-1} = \tfrac{(H^2-K)}{2}\hmgmatrix{N}_{\hmgmatrix{S}}\,\sigma_{f}
  \label{eq:WillmoreCurvature}
\end{equation}
The normal circle \(\hmgmatrix{N}_{\hmgmatrix{S}}\) appears after dividing this 2-form by $\sigma_{f}$ and subsequent normalization which is only possible away from umbilic points.
\end{example}

\subsection{Discrete Theory}

\begin{figure}[ht]
  \centering
  \includegraphics[width=\columnwidth]{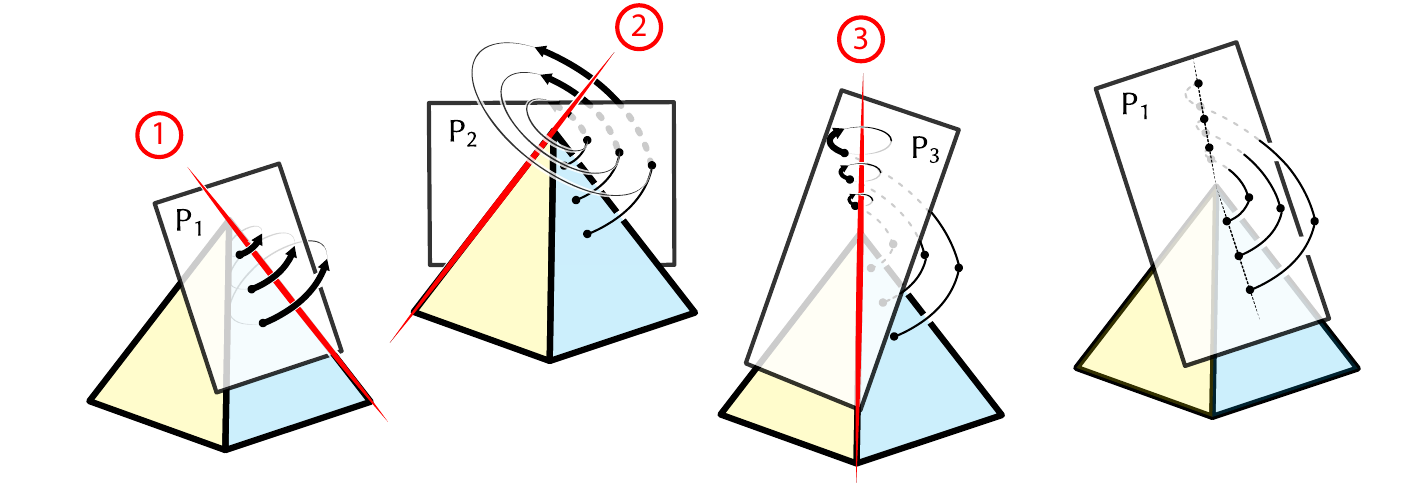}
  \caption{A well-known result is that rolling the tangent planes over a discrete surface computes the angle defect, a discrete version of Gaussian curvature. Up to sending the point of rotation to infinity, this picture also describes the M{\"o}bius rolling of spheres over a discrete surface. In that case, the monodromy angle computes the discrete Willmore energy instead.}
\end{figure}

The smooth interpretation of the Willmore energy provides a geometric principle from which one can arrive at a discretization of the Willmore energy. Given an assignment of discrete mean curvature spheres on the vertices of some cell complex define a discrete Willmore energy per face from the monodromy obtained by rolling the discrete mean curvature spheres over the surface. If the discrete mean curvature spheres are M{\"o}bius invariantly determined from the discrete surface then the resulting discrete Willmore energy is also M{\"o}bius invariant. 

In the following section, we show that the discrete Willmore energy
from \eqref{DiscreteWillmore} arises in this way from the choice of
circumspheres for discrete mean curvature spheres. The circumspheres
are defined per edge as the unique sphere containing the four vertices
of the two faces adjacent to the edge, and so we introduce the Kagome complex (\secref{Kagome}) to describe the combinatorics of rolling
adjacent circumspheres. As we roll the circumspheres around a vertex
we obtain a M{\"o}bius monodromy. Away from the vertices \(i\in\vertices\) where \(\Willmore_i = 0\) the monodromy is a rotation about a normal circle to the circumsphere that we started rolling from. To extract the discrete Willmore energy from this monodromy we only need to look at the rotation angle of this transformation. 

\subsubsection{Simplicial Surfaces}
So that the circumspheres are well-defined we need to assume that the
vertices of the triangles incident on an edge are not concircular.

\begin{definition}
  For each face \(\ijk\in\faces\), the \textbf{circumcircle} \(\Cijk_{\ijk}\in\Circles\) is the unique oriented circle going through \(\fpos_i,\fpos_j,\fpos_k\) in counterclockwise order.   
\end{definition}
The geometric properties of the circumcircle can be summarized in the following expression of \(\Cijk_{\ijk}\) written with respect to the vertex \(\fpos_i\):
\begin{equation}
  \Cijk_{\ijk} = \hmgmatrix{T}_{\fpos_i}\begin{pmatrix}
    \mathtt{t}_{\ijk}^i & 0 \\ -\mathtt{k}_{\ijk} & \mathtt{t}_{\ijk}^i
  \end{pmatrix}\hmgmatrix{T}_{\fpos_i}^{-1}
\end{equation}
where \(\mathtt{t}_{\ijk}^i\) is the tangent to the circumcircle \(\Cijk_{\ijk}\) at the point \(\fpos_i\) and \(\mathtt{k}_{\ijk}\) is the circumcircle curvature binormal.

\begin{definition}
  For each edge \(\ij\in\edges\), the \textbf{circumcircle intersection angle} \(\beta_{\ij}\in[0,\pi)\) is defined to be the intersection angle between the circumcircles \(\Cijk_{\ijk}\) and \(\Cijk_{\jil}\).
\end{definition}
The circumcircle intersection angle can be computed from
\(\cos\beta_{\ij} =
\sang{\Cijk_{\ijk},\Cijk_{\jil}}\). 

\begin{definition}
  For each edge \(\ij\in\edges\), the \textbf{edge circumsphere}
  \(\Sij_{\ij}\) is defined to be the unique sphere containing the
  circumcircles \(\Cijk_{\ijk}\) and \(\Cijk_{\jil}\). The orientation
  of \(\Sij_{\ij}\) is determined so that the outward pointing normal
  to \(\Sij_{\ij}\) at the points \(\fpos_i\) is given by the direction of
  the cross-product of the tangent vectors of the circumcircles
  \(\mathtt{t}_{jil}^i\times \mathtt{t}_{ijk}^i\).
\end{definition}
We will later also need to refer to the geometric properties of the
circumsphere that are summarized in the following expression for
\(\Sij_{\ij}\):
\begin{equation}
  \Sij_{\ij} = \hmgmatrix{T}_{\fpos_i}\begin{pmatrix}
    \mathtt{n}_{\ij}^i & 0 \\ -h_{\ij} & -\mathtt{n}_{\ij}^i
  \end{pmatrix}\hmgmatrix{T}_{\fpos_i}^{-1}
\end{equation}
where \(\mathtt{n}_{\ij}^i\) is the normal to the edge circumsphere
\(\Sij_{\ij}\) at the point \(\fpos_i\) and \(h_{\ij}\) is its mean
curvature.  The circumspheres are a natural choice of the discrete
mean curvature spheres since they are defined in a M{\"o}bius invariant
way using the vertex positions. From the point of view of the discrete Willmore energy, they are also the only natural choice since the energy is defined in terms of circumcircle intersection angles and two adjacent circumcircles uniquely determine the circumsphere.

\subsubsection{The Kagome Complex}
\label{sec:Kagome}
\begin{figure}[h]
	\includegraphics[width=\columnwidth]{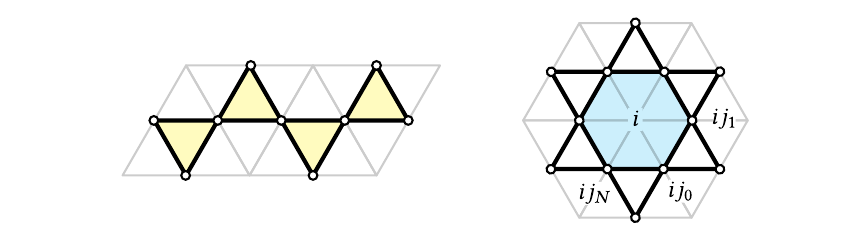}
	\caption{The Kagome complex is obtained by connecting up the edge midpoints of the original simplicial surface. The faces of the Kagome complex inside each triangle can be used to express integrability conditions over the triangles, and the faces of the Kagome complex associated with the vertices can be used to express integrability conditions around vertices.}
	\label{fig:Kagome Complex}
\end{figure}
Since the circumspheres live on the edges of \(\mesh\), the M{\"o}bius
transformations which identify incident circumspheres are most
conveniently described as maps that live on the oriented edges of the
Kagome complex. The vertices of the Kagome complex are identified with
the edges of \(\mesh\) while the edges of the Kagome complex are
identified with the corners of \(\mesh\). A corner of \(\mesh\) is
described by a face \(\ijk\in\faces\) and a vertex \(i\) incident to the
face.  The faces of the Kagome complex are partitioned into two sets,
one is identified with the faces of \(\mesh\)
(highlighted in the left side of \figref{Kagome Complex}) while the
other set is identified with the vertices of \(\mesh\)
(highlighted in the right side of \figref{Kagome Complex}). We write \(j_{ki}\) to denote oriented Kagome edges from \(jk\) to \(ji\). 

The kagome lattice is a two-dimensional geometric lattice structure that has a distinctive pattern of interconnected triangles, reminiscent of a traditional Japanese woven basket called a ``kagome.'' It is named after this basket due to its resemblance to the interwoven, hexagonal, and equilateral faces seen in the basket's weave~\cite{Mekata:2003:KSB}. The Kagome lattice is often used as a theoretical model in condensed matter physics and material science to study phenomena such as magnetism and electron transport~\cite{Syozi:1951:SKL}. We call our complex the Kagome complex in light of the fact that it generalizes the combinatorics of the Kagome lattice to a triangulated surface.
 
\subsubsection{Rolling Circumspheres}
Let us define the rolling circumsphere connection as a discrete \(\Sp\)-connection over the trivial vector bundle over the Kagome complex of \(\mesh\); that is, it is an assignment of a M{\"o}bius transformation of \(S^3\) between fibers \(\HH^2_{\ij} = \HH^2\) and \(\HH^2_{jk} = \HH^2\) of the trivial \(\HH^2\)-bundle over the vertices in the Kagome complex.
\begin{definition}
	The \textbf{rolling circumspheres connection} assigns to each oriented edge \(i_{jk}\) in the Kagome complex the map \(\hmgmatrix{P}(\nabla^{\Sij})_{i_{jk}} \in \Sp\) defined as \[\hmgmatrix{P}(\nabla^{\Sij})_{i_{jk}} \coloneqq\exp\big( \tfrac12\alpha^{i}_{jk} \Cijk_{\ijk} \big) \] where \(\alpha^i_{\jk}\) is the signed angle between the circumspheres \(\Sij_{\ij}\) and \(\Sij_{\ki}\).
\end{definition}
The notation \(\hmgmatrix{P}(\nabla^{\Sij})\) is used to indicate that this is a discrete version of parallel transport induced by the connection \(\nabla^{\hmgmatrix{S}}\). The discrete connection can be used to parallel transport points in \(S^3\) and M{\"o}bius transformations of \(S^3\). Consider a M{\"o}bius transformation \(A\in\HH^{2\times 2}\) of \(S^3\). It can be parallel transported from \(ij\) to \(jk\) by conjugation with \(\hmgmatrix{P}(\nabla^{\hmgmatrix{S}})_{i_{jk}}\). In particular, if we look at the parallel transport of the circumspheres along Kagome edges we find that circumspheres are parallel:
\begin{equation}
  \hmgmatrix{S}_{jk} = \hmgmatrix{P}(\nabla^{\hmgmatrix{S}})_{i_{jk}}\,\hmgmatrix{S}_{\ij}\,\hmgmatrix{P}(\nabla^{\hmgmatrix{S}})_{i_{jk}}^{-1}.
\end{equation}
Since \(\Sij_{\ij}\) and \(\Sij_{\jk}\) intersect in the circumcircle
\(\Cijk_{\ijk}\) they define an elliptic sphere pencil, and so \( \hmgmatrix{I} -
\Sij_{\jk}\Sij_{\ij} =
2\cos\tfrac{\alpha^i_{{\jk}}}{2}~\hmgmatrix{P}(\nabla^{\hmgmatrix{S}})_{i_{jk}}\). Since
the parallel transport is obtained by rotation around the circumcircle
the trajectories traced out by the interpolated parallel transport
maps \(t\mapsto
\exp\big(\tfrac{t}{2}\alpha^{i}_{\jk}\Cijk_{\ijk}\big)\) on each edge
are orthogonal to the circumspheres (see also \secref{ellipticpencil}). 

\paragraph*{Monodromy of the Rolling Circumspheres Connection}
The discrete analog of the curvature of the rolling mean curvature
spheres connection is given by the monodromy of the rolling
circumsphere connection over the faces of the Kagome complex.
The faces of the Kagome complex identified with the faces of
\(\mesh\) have trivial monodromy since all of the parallel transport
maps involved are given by rotations around the same circumcircle. In
particular, the parallel transport maps associated with the oriented
edges of these faces commute.  For the faces of the Kagome complex
identified with vertices of the original mesh, the monodromy
of the rolling circumspheres connection is defined as the product
of the parallel transport in counterclockwise order across the
oriented edges bounding a Kagome face associated with a vertex of \(\mesh\).
The computation of this monodromy angle will follow from the following elementary lemma concerning the geometry of spherical polygons.

\begin{lemma}
  Let \(n_0,\dots,n_{m-1}\in S^2\) be the vertices of a
  spherical polygon. Assume that consecutive vertices are not antipodal. The composition of the
  parallel transport maps on \(S^2\) between successive vertices is
  equal to the clockwise rotation around \(n_0\) by the sum of the
  exterior angles of the polygon.
  \label{lem:sphereprod}
\end{lemma}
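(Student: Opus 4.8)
The plan is to realize each Levi-Civita parallel transport map along a great-circle edge as the restriction of a genuine rotation of $\RR^3$, and then to track a single reference vector around the loop. First I would fix the geodesic edges $\gamma_i$ running from $n_i$ to $n_{i+1}$ (indices mod $m$), which are well defined and unique precisely because consecutive vertices are not antipodal. Write $\tau_i^+\in T_{n_i}S^2$ and $\tau_i^-\in T_{n_{i+1}}S^2$ for the unit tangents of $\gamma_i$ at its two endpoints. The basic observation is that parallel transport $P_i\colon T_{n_i}S^2\to T_{n_{i+1}}S^2$ along $\gamma_i$ is the restriction to tangent planes of the rotation $R_i\in\mathrm{SO}(3)$ about the axis $n_i\times n_{i+1}$ that carries $n_i$ to $n_{i+1}$: this rotation sends the frame $(\dot\gamma_i,\,n_i\times n_{i+1})$ at $n_i$ to the corresponding frame at $n_{i+1}$, which is exactly the parallel frame along the geodesic. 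In particular $P_i(\tau_i^+)=\tau_i^-$, and $P_i$ is an orientation-preserving isometry of the tangent planes.

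Because the $R_i$ are honest elements of $\mathrm{SO}(3)$ and $R_{m-1}\cdots R_0$ sends $n_0\mapsto n_1\mapsto\cdots\mapsto n_0$, the composite fixes $n_0$ and is therefore a rotation about the axis $n_0$, whose restriction to $T_{n_0}S^2$ is exactly the holonomy $P_{m-1}\circ\cdots\circ P_0$. It thus only remains to compute the angle of this rotation, and for that I would track the reference vector $v=\tau_0^+$. The elementary fact I would use is that an orientation-preserving isometry between oriented tangent planes commutes with rotation by a fixed angle; writing $\rho_\theta$ for counterclockwise rotation in the relevant tangent plane, this gives $P_i\circ\rho_\theta=\rho_\theta\circ P_i$ together with $P_i(\tau_i^+)=\tau_i^-$.

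Carrying this out, at each vertex $n_i$ the incoming tangent $\tau_{i-1}^-$ and the outgoing tangent $\tau_i^+$ differ by the exterior angle, $\tau_{i-1}^-=\rho_{-\epsilon_i}(\tau_i^+)$. Pushing $v$ forward and repeatedly rewriting the incoming tangent in terms of the outgoing one, an induction shows that after $P_k\circ\cdots\circ P_0$ the image is $\rho_{-(\epsilon_1+\cdots+\epsilon_k)}(\tau_k^-)$; closing the loop and absorbing the final exterior angle $\epsilon_0$ at $n_0$ yields $(P_{m-1}\circ\cdots\circ P_0)(v)=\rho_{-(\epsilon_0+\cdots+\epsilon_{m-1})}(v)$. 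Since the composite is already known to be a rotation of $T_{n_0}S^2$, this identifies it as the clockwise rotation about $n_0$ by $\sum_i\epsilon_i$, as claimed.

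The step I expect to be the main obstacle is keeping the orientation conventions consistent throughout: fixing the sign of the exterior angle, verifying that geodesic parallel transport is orientation preserving so that it commutes with in-plane rotations, and correctly accounting for the wrap-around contribution $\epsilon_0$ at the base vertex. A Gauss--Bonnet sanity check is reassuring---the enclosed area equals $2\pi-\sum_i\epsilon_i$, so clockwise rotation by $\sum_i\epsilon_i$ agrees modulo $2\pi$ with the counterclockwise holonomy by the area---but the direct tracking argument is what produces the exterior-angle sum exactly and keeps the proof elementary.
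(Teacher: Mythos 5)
Your argument is correct and is essentially the paper's proof in different notation: you realize each parallel transport as the rotation about \(n_\ell\times n_{\ell+1}\) carrying \(n_\ell\) to \(n_{\ell+1}\), note the composite fixes \(n_0\), and accumulate the exterior-angle corrections at the vertices by commuting in-plane rotations past the transport maps. The paper packages the same idea with unit quaternions, inserting the vertex rotations \(\sigma_\ell=\exp(\tfrac{\beta_\ell}{2}n_\ell)\) so that the full product \(\prod\rho_\ell\sigma_\ell\) fixes a frame and is therefore \(\pm1\), which is exactly your tangent-vector tracking step.
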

\begin{proof}
  For \(\ell = 0,\dots, m-1\) let \(\alpha_{\ell}\in[0,\pi)\) and \(t_\ell\in S^2\) be defined by
  \begin{equation}
    \cos\alpha_\ell = \langle n_\ell, n_{\ell+1}\rangle,\qquad \sin\alpha_\ell~t_\ell = n_\ell\times n_{\ell+1}
  \end{equation}
  where the indices are treated modulo \(m\). The exterior angles \(\beta_{\ell}\in[0,\pi)\) are defined by 
  \begin{equation}
    \cos\beta_\ell = \langle t_{\ell-1},t_{\ell}\rangle.
  \end{equation}
  Define the parallel transport rotations as quaternions \(\rho_{\ell}\coloneqq \exp(\tfrac{\alpha_{\ell}}{2}t_{\ell})\),
  which satisfy \(\rho_{\ell} n_\ell \rho_{\ell}^{-1} = n_{\ell+1}\). Introducing \(\sigma_{\ell}\coloneqq \exp(\tfrac{\beta_{\ell}}{2}n_{\ell})\), one obtains that the ordered product
  \begin{equation}
    \prod_{\ell=0}^{m-1}\rho_{\ell}\sigma_{\ell} = \rho_{m-1}\sigma_{m-1}\cdots\rho_{0}\sigma_{0} = \pm 1
    \label{eq:prod1}
  \end{equation}
  since it is a rotation that fixes both \(n_0\) and \(t_0\). 
  \begin{figure}[ht]
    \centering
    \includegraphics[width=\columnwidth]{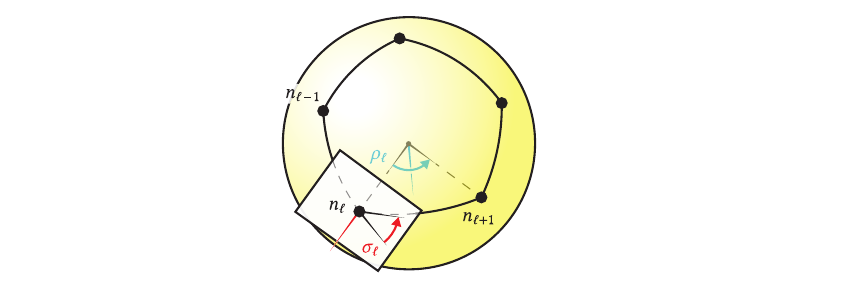}
    \caption{The quaternions \(\rho_{\ell}\) and \(\sigma_{\ell}\) defined in the proof of \lemref{sphereprod} describe the rotations visualized in blue and red, respectively. The axes of rotation are indicated by vectors of the same color.}
  \end{figure}
  Computing
  \begin{equation}
    \rho_\ell\sigma_\ell \rho_{\ell}^{-1} = \rho_\ell(\cos\tfrac{\beta_{\ell}}{2} + \sin\tfrac{\beta_{\ell}}{2}n_\ell)\rho_{\ell}^{-1} = \cos\tfrac{\beta_{\ell}}{2} + \sin\tfrac{\beta_{\ell}}{2}n_{\ell+1} = \exp\Big(\tfrac{\beta_{\ell}}{2}n_{\ell+1}\Big)
  \end{equation}
  and rearranging yields
  \begin{equation}
    \rho_{\ell}\sigma_{\ell} = \exp\Big(\tfrac{\beta_{\ell}}{2}n_{\ell+1}\Big)\rho_{\ell}.
  \end{equation}
  By a cyclic application of this equation, we conclude that
  \begin{align}
    \prod_{\ell=0}^{m-1}\rho_{\ell}\sigma_{\ell} = \exp\Big(\frac{n_0}{2}\sum_{\ell=0}^{m-1}\beta_{\ell} \Big)\prod_{\ell=0}^{m-1}\rho_{\ell},
  \end{align}
  which implies by \eqref{prod1} that
  \begin{equation}
    \prod_{\ell=0}^{m-1}\rho_{\ell} = \pm\exp\Big(-\frac{n_0}{2}\sum_{\ell=0}^{m-1}\beta_{\ell} \Big).
    \label{eq:s2monodromy}
  \end{equation}
  The sign of the quaternion is irrelevant to the rotation it describes and since counterclockwise rotation about \(n_0\) corresponds to a positive angle of rotation the monodromy is a clockwise rotation.
\end{proof}

For the remainder of this section, we will fix an interior vertex \(i\in\vertices\) and let \(\{j_\alpha\}_{\alpha=1}^{N}\) be a labeling of the adjacent vertices in counterclockwise order, with \(N\) the degree of \(i\). 
Define the monodromy of \(\hmgmatrix{P}(\nabla^{\Sij})\) around the vertex \(i\) \begin{equation}\hmgmatrix{M}(\nabla^{\Sij})_i \coloneqq \prod_{\alpha=1}^{N}\hmgmatrix{P}(\nabla^{\Sij})_{i_{j_{\alpha}j_{\alpha+1}}} = \hmgmatrix{P}(\nabla^{\Sij})_{i_{j_{N}j_0}}\cdots\hmgmatrix{P}(\nabla^{\Sij})_{i_{j_1j_{2}}} \hmgmatrix{P}(\nabla^{\Sij})_{i_{j_0j_1}}.
  \label{eq:M1}
\end{equation}
This depends on the labeling of the vertices, but the monodromy
obtained from different choices only differ by conjugation, and so the monodromy angle corresponding to the discrete Willmore energy does not depend on this choice. 

\begin{theorem}
  If \(\Willmore_i\neq 0\) then the monodromy \(\hmgmatrix{M}(\nabla^{\Sij})_i\) is a rotation about a normal circle to \(\Sij_{\ij_0}\) with rotation angle equal to the discrete Willmore energy. If \(\Willmore_i = 0\) then after sending \(\fpos_i\) to infinity by a M{\"o}bius transformation the monodromy is a translation preserving \(\Sij_{\ij_0}\).
\end{theorem}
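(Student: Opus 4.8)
The plan is to use M\"obius invariance to send $\fpos_i$ to infinity and thereby reduce the monodromy to a concrete Euclidean isometry, to which \lemref{sphereprod} applies directly. After the M\"obius transformation sending $\fpos_i\mapsto\infty$, every edge circumsphere $\Sij_{\ij_\alpha}$ (each of which contains $\fpos_i$) becomes a plane with a well-defined unit normal $n_\alpha\in S^2$ at $\fpos_i$, every circumcircle $\Cijk_{\ij_\alpha j_{\alpha+1}} = \Sij_{\ij_\alpha}\cap\Sij_{\ij_{\alpha+1}}$ becomes a Euclidean line whose direction is parallel to $n_\alpha\times n_{\alpha+1}$, and by \secref{ellipticpencil} (see also \prpref{sphereprod}) the transport $\hmgmatrix{P}(\nabla^{\Sij})_{i_{j_\alpha j_{\alpha+1}}} = \exp\!\big(\tfrac12\alpha^i_{j_\alpha j_{\alpha+1}}\Cijk_{\ij_\alpha j_{\alpha+1}}\big)$ acts as the Euclidean rotation about that line by the angle $\alpha^i_{j_\alpha j_{\alpha+1}}$ between the two planes. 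In particular the monodromy $\hmgmatrix{M}(\nabla^{\Sij})_i$ is an orientation-preserving isometry of $\RR^3$, and since the whole construction is M\"obius covariant any intrinsic conclusion drawn in this chart transfers back.

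Next I would extract the rotational part. The rotation part of each transport carries $n_\alpha$ to $n_{\alpha+1}$ about the axis $n_\alpha\times n_{\alpha+1}$, which is exactly the $S^2$ parallel transport appearing in \lemref{sphereprod} for the spherical polygon with vertices $n_1,\dots,n_N$; as translation parts do not affect the rotational part of a composition, the rotational part of $\hmgmatrix{M}(\nabla^{\Sij})_i$ equals the composition studied there. The exterior angle of this polygon at $n_\alpha$ is the angle between the directions of $\Cijk_{\ij_{\alpha-1} j_\alpha}$ and $\Cijk_{\ij_\alpha j_{\alpha+1}}$, the two circumcircles meeting along the edge $\ij_\alpha$, so by \prpref{circleprod} it equals the circumcircle intersection angle $\beta_{\ij_\alpha}$. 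Hence \lemref{sphereprod} gives the rotational part as the rotation about $n_0$ (the normal of $\Sij_{\ij_0}$) by $\sum_{\alpha}\beta_{\ij_\alpha} = \sum_{\ij}\beta_{\ij}$. Because $\exp(-\pi n_0) = -1$ in the unit quaternions, this coincides with the rotation about $n_0$ by $\sum_{\ij}\beta_{\ij} - 2\pi = \Willmore_i$.

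It remains to control the translational part. Since the circumspheres are parallel, $\hmgmatrix{M}(\nabla^{\Sij})_i$ fixes $\Sij_{\ij_0}$ under conjugation, hence preserves the plane $\Sij_{\ij_0}$ as a set. Writing the isometry as $x\mapsto Rx+v$ with $R$ the rotation about the direction $n_0$ just found, invariance of $\{\langle n_0,\cdot\rangle = \mathrm{const}\}$ forces $\langle n_0,v\rangle = 0$, so the translation has no component along the axis and the screw pitch vanishes. If $\Willmore_i\neq 0$ then $R\neq\mathrm{Id}$, and a zero-pitch screw is a pure rotation about a (possibly shifted) line in direction $n_0$; this line is orthogonal to $\Sij_{\ij_0}$, i.e.\ a normal circle, and the rotation angle is $\Willmore_i$. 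If $\Willmore_i = 0$ then $R=\mathrm{Id}$ and the monodromy is a pure translation by a vector $v\perp n_0$, that is a translation preserving $\Sij_{\ij_0}$.

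The main obstacle is the bookkeeping of the closing-up defect: the spherical polygon lemma delivers a rotation by $\sum_{\ij}\beta_{\ij}$, and the passage to the energy $\Willmore_i = \sum_{\ij}\beta_{\ij}-2\pi$ must be carried through the quaternionic double cover, where the extra full turn is precisely the factor $\exp(-\pi n_0) = -1$. Matching the exterior angles of the normal polygon to the circumcircle intersection angles (including the sign conventions on $\alpha^i_{j_\alpha j_{\alpha+1}}$ for non-convex vertex stars, which are designed so that the $\hmgmatrix{P}$ realize the $S^2$ parallel transports verbatim) also requires care, whereas eliminating the screw component is immediate once one invokes parallelism of the circumspheres.
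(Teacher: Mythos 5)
Your proof is correct and follows essentially the same route as the paper's: send \(\fpos_i\) to infinity, recognize the transports as Euclidean rotations about the circumcircle lines, and apply \lemref{sphereprod} to the spherical polygon of circumsphere normals to obtain the rotation angle \(\sum_{\ij}\beta_{\ij}\equiv\Willmore_i\pmod{2\pi}\). The only (minor) divergence is in the bookkeeping of the translational residue: you extract the rotational part via the linear-part homomorphism of the Euclidean group and invoke the screw decomposition, whereas the paper writes the monodromy as \(\exp\big(\tfrac{\Theta}{2}\hmgmatrix{N}_0\big)\exp\hmgmatrix{V}\) with \(\hmgmatrix{V}\in\Cotangent_{\fpos_i}\) and determines \(\Theta\) by evaluating on \(\psi_i\) --- your version is in fact slightly more explicit about why a nontrivial zero-pitch screw is a pure rotation about a possibly shifted normal circle.
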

\begin{proof}
  By sending \(\fpos_i\) to infinity we transform all of the
  circumspheres into planes and the monodromy is transformed into a
  product of Euclidean rotations that maps \(\Sij_{\ij_0}\) to
  itself. Therefore, in this transformed picture
  \(\hmgmatrix{M}(\nabla^{\Sij})_i\) is equal to the composition of a
  translation and a Euclidean rotation about a normal line
  \(\hmgmatrix{N}_{0}\) to the plane \(\Sij_{\ij_0}\) through the
  point \(\fpos_i\). The translation can be written as
  \(\exp\hmgmatrix{V} = \hmgmatrix{I} + \hmgmatrix{V}\) for some
  \(\hmgmatrix{V}\in\Cotangent_{\fpos_i}\) commuting with \(\Sij_{\ij_0}\). Hence,
  \begin{equation}
    \hmgmatrix{M}(\nabla^{\Sij})_i = \exp\Big( \tfrac{\Theta}{2}\hmgmatrix{N}_{0} \Big)\exp\hmgmatrix{V}.
    \label{eq:M2}
  \end{equation}  
  The rotation angle \(\Theta\) can be determined by examining the action of \(\hmgmatrix{M}(\nabla^{\Sij})_i\) on the vector \(\psi_i \coloneqq\begin{psmallmatrix}
    \fpos_i \\ 1
  \end{psmallmatrix}\). 
  On one hand, by \eqref{M2} 
  \begin{equation}
    \hmgmatrix{M}(\nabla^{\Sij})_i\psi_i = \exp\Big( \tfrac{\Theta}{2}\hmgmatrix{N}_{0} \Big)\exp\hmgmatrix{V}\psi_i = \exp\Big( \tfrac{\Theta}{2}\hmgmatrix{N}_{0} \Big)\psi_i = \psi_i\exp\Big( \tfrac{\Theta}{2}n_{0}^i \Big),
  \end{equation}    
  where \(n_{0}^i\) is the normal vector to the circumsphere \(\Sij_0\) at the point \(\fpos_i\). On the other hand, by \eqref{M1} we have that 
  \begin{equation}
    \hmgmatrix{M}(\nabla^{\Sij})_i\psi_i = \prod_{\alpha=1}^{N}\hmgmatrix{P}(\nabla^{\Sij})_{i_{j_{\alpha}j_{\alpha+1}}}\psi_i = \psi_i\prod_{\alpha=1}^{N}\exp\Big( \tfrac{\alpha^i_{j_\ell j_{\ell+1}}}{2} \mathtt{t}_{ij_\ell j_{\ell+1}}^i \Big)
  \end{equation}
  By \lemref{sphereprod} 
  \begin{equation}
    \prod_{\alpha=1}^{N}\exp\Big( \tfrac{\alpha^i_{j_\ell j_{\ell+1}}}{2} \mathtt{t}_{ij_\ell j_{\ell+1}}^i \Big) = \pm\exp\Big(-\tfrac{n_{0}^i}{2}\sum_{\ell=0}^{N}\beta_{ij_\ell}\Big).
  \end{equation}
  Thus, by equating these two expressions
  \begin{equation}
    \exp\Big( \tfrac{\Theta}{2}n_{0}^i \Big) = \pm\exp\Big(-\tfrac{n_{0}^i}{2}\sum_{\ell=0}^{N}\beta_{ij_\ell}\Big).
  \end{equation}
  Therefore,
  \begin{equation}
    \hmgmatrix{M}(\nabla^{\Sij})_i = \mp\exp\Big( -\tfrac{\Willmore_i}{2}\hmgmatrix{N}_{0} \Big)\exp\hmgmatrix{V}.
  \end{equation}
\end{proof}

This geometric interpretation of the discrete Willmore energy as the rotation angle measured when rolling the circumspheres around a vertex mirrors the geometric interpretation of the smooth energy (see \eqref{WillmoreCurvature}). In the discrete case the curvature is an element of \(\Sp\) while in the smooth setting it is an element of \(\sp\). We conclude the paper by discussing one possibility of how the rolling spheres interpretation of the discrete Willmore energy can be used to obtain a discrete Willmore energy for more general piecewise spherical surfaces.

\section{Discussion and Outlook}
\paragraph{M{\"o}bius Invariant Discrete Surfaces}
\begin{figure}[h]
  \includegraphics[width=\columnwidth]{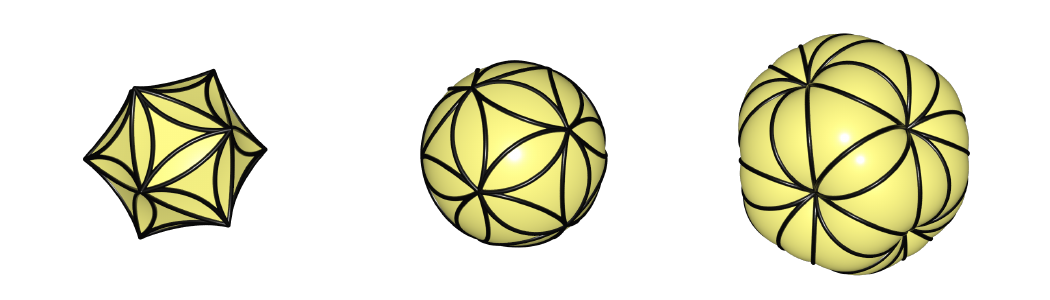}
  \caption{Visualized above is the piecewise spherical surface obtained by filling in faces with the ideal hyperbolic triangle determined by the circumcircle and the choice of sphere congruence per face and the edges are filled in with lenses that are uniquely determined from the piecewise spherical regions in adjacent faces.}
  \label{fig:PiecewiseSphericalSurface}
\end{figure}
Given the additional data of a sphere congruence \(\Sij : \faces \to \Spheres\) such that \(\Sij_{\ijk}\) is in the elliptic sphere pencil generated by the circumcircle \(\Cijk_{\ijk}\) one can produce a M{\"o}bius invariant piecewise spherical surface. That it is M{\"o}bius invariant, means that if we transform the vertex positions and sphere congruence by a M{\"o}bius transformation then the resulting piecewise spherical surface will transform by the same M{\"o}bius transformation. 

The construction we present of a piecewise spherical surface from the additional data of a sphere per face consists of two kinds of piecewise spherical faces: (1) ideal hyperbolic faces associated with faces of \(\mesh\) and (2) lens faces associated with edges of \(\mesh\). The ideal hyperbolic faces are obtained by considering the two regions of \(\Sij_{\ijk}\) bounded by \(\Cijk_{\ijk}\) as Poincar{\'e} models of two-dimensional hyperbolic space. Using the orientation of both the circumcircle and the sphere \(\Sij_{\ijk}\) we can mark the region to the left of the circumcircle as the interior region and we can fill in the vertices \(\fpos_i,\fpos_j,\fpos_k\) with an ideal hyperbolic triangle (which is determined by the three vertices on the ideal boundary) \(\Delta_{\ijk}\) in the interior Poincar{\'e} model of hyperbolic space. For each edge \(\ij\in\edges\) the corresponding boundaries of \(\Delta_{\ijk}\) and \(\Delta_{\jil}\) are circular arc edges that intersect in two points \(\fpos_i,\fpos_j\). As such, there is a unique sphere \(\Sigma_{\ij}\) containing these two circular arc edges and we can take \(\Delta_{\ij}\) to be the spherical lens in \(\Sigma_{\ij}\) interpolating these two circular arc edges. We visualize piecewise spherical surfaces obtained by different choices of sphere congruences in \figref{PiecewiseSphericalSurface}. One natural choice is obtained by taking the harmonic mean of the edge circumspheres as the spheres per face; for each face \(\ijk\):
\begin{equation} 
  \Sij_{\ijk} \coloneqq \frac{\Sij_{\ij} + \Sij_{\jk} + \Sij_{\ki}}{|\Sij_{\ij} + \Sij_{\jk} + \Sij_{\ki}|}\in\Spheres.
  \label{eq:HarmonicMean}
\end{equation}
With this piecewise spherical surface one can define a geometric discretization of the Willmore energy as the monodromy angle of rolling the face spheres \(\Sij_{\ijk}\) onto the edge spheres \(\Sigma_{\ij}\) and continuing all around a vertex of the original mesh---equivalently, one could consider the area of this discrete sphere congruence. Recently, meshes with spherical faces have also been introduced for applications in architectural geometry~\cite{Kilian:2023:MSF}. It is an interesting question to study the properties of this energy and the resulting approximation of the Willmore energy obtained by optimizing away the choice of sphere congruence.

\bibliography{RollingSpheres}

\appendix
    \section{M{\"o}bius Geometry of \(S^3\)}
    \label{app:SpDecomp}
    Looking at the components of the equation \(\hmgmatrix{A}^*\hmgmatrix{A} = \hmgmatrix{I}\) yields the following description of \(\Sp\):
    \begin{equation}
    \Sp = \Bigg\{\begin{pmatrix}
      a & b \\ c & d
    \end{pmatrix}\in\HH^{2\times 2}\,\,\mid\,\, \Re(a\bar{c})= 0, \, \Re(b\bar{d}) = 0,\, \bar{b}c+\bar{d}a = 1\Bigg\}
    \label{eq:SpEquation}
    \end{equation}
    
    \begin{proposition}
      Let \(\hmgmatrix{A}\in\Sp\). Then there exists unique \(x,y\in\RR^3\) and \(\mu\in\HH\) satisfying 
      \begin{equation}
      \hmgmatrix{A} = \begin{pmatrix} 1 & 0 \\ y & 1\end{pmatrix} \begin{pmatrix}\mu & 0 \\ 0 & \bar{\mu}^{-1}\end{pmatrix} \begin{pmatrix} 1 & x \\ 0 & 1 \end{pmatrix}.
      \end{equation}
    \end{proposition}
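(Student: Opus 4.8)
The plan is to match the entries of the asserted product against those of $\hmgmatrix{A}=\begin{psmallmatrix} a & b \\ c & d\end{psmallmatrix}$, solve for $(x,\mu,y)$, and then check that the $\Sp$-relations recorded in \eqref{SpEquation} force the candidate solution to be admissible. Multiplying out the right-hand side gives
\begin{equation*}
  \begin{pmatrix} 1 & 0 \\ y & 1\end{pmatrix}\begin{pmatrix}\mu & 0 \\ 0 & \bar{\mu}^{-1}\end{pmatrix}\begin{pmatrix} 1 & x \\ 0 & 1 \end{pmatrix} = \begin{pmatrix} \mu & \mu x \\ y\mu & y\mu x + \bar{\mu}^{-1}\end{pmatrix},
\end{equation*}
so the four scalar equations to be solved are $a=\mu$, $b=\mu x$, $c=y\mu$, and $d=y\mu x+\bar{\mu}^{-1}$. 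The first three already determine the only possible values $\mu=a$, $x=a^{-1}b$, and $y=ca^{-1}$, which settles uniqueness outright; the content of the proof is to show that these values lie in the required sets and that the remaining equation holds automatically.

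First I would verify the range conditions $x,y\in\RR^3=\Im\HH$. Since $\hmgmatrix{A}\in\Sp$ satisfies both $\hmgmatrix{A}^*\hmgmatrix{A}=\hmgmatrix{I}$ and $\hmgmatrix{A}\hmgmatrix{A}^*=\hmgmatrix{I}$, the vanishing of the appropriate off-diagonal entries gives $\Re(\bar{a}b)=0$ and $\Re(a\bar{c})=0$. Writing $a^{-1}=\bar{a}/|a|^2$ and using the cyclic invariance $\Re(pq)=\Re(qp)$ of the real part, these translate into $\Re(a^{-1}b)=\Re(\bar{a}b)/|a|^2=0$ and $\Re(ca^{-1})=\Re(c\bar{a})/|a|^2=0$, so indeed $x=a^{-1}b$ and $y=ca^{-1}$ are purely imaginary. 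Next I would check the fourth equation, which with $\mu=a$ reads $d=ca^{-1}b+\bar{a}^{-1}$; using $a^{-1}=\bar{a}/|a|^2$ and $\bar{a}^{-1}=a/|a|^2$ it is equivalent to $|a|^2 d=c\bar{a}b+a$. Right-multiplying the relation $c\bar{b}+d\bar{a}=1$ (the $(2,2)$-entry of $\hmgmatrix{A}\hmgmatrix{A}^*=\hmgmatrix{I}$) by $a$ and using that $|a|^2$ is central gives $|a|^2 d=a-c\bar{b}a$, so the two expressions agree exactly when $c(\bar{a}b+\bar{b}a)=0$. Since $\bar{a}b+\bar{b}a=2\Re(\bar{a}b)=0$ by the off-diagonal relation already established, the fourth equation holds for free, producing the factorization together with its uniqueness.

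The step I expect to be the main obstacle is the locus $a=0$. Geometrically $a$ records the top homogeneous component of $\hmgmatrix{A}\cdot\infty$, and $a=0$ is precisely the condition that $\hmgmatrix{A}$ sends the point at infinity to the origin; because $\hmgmatrix{T}_x$ and $\hmgmatrix{R}_\mu$ fix $\infty$ while $\hat{\hmgmatrix{T}}_y$ carries $\infty$ only to the finite point $y^{-1}$, the product $\hat{\hmgmatrix{T}}_y\hmgmatrix{R}_\mu\hmgmatrix{T}_x$ has top-left entry $\mu$, which cannot be zero. To reach this locus I would try to reduce it to the generic case through the group law: left-multiplying by a translation $\hmgmatrix{T}_v$ replaces $\hmgmatrix{A}\cdot\infty=0$ by $\hmgmatrix{T}_v\hmgmatrix{A}\cdot\infty=v\neq 0$, so $\hmgmatrix{T}_v\hmgmatrix{A}$ has nonzero top-left entry $vc$ and decomposes as above, after which it remains to transport the leading factor $\hmgmatrix{T}_v^{-1}$ back through the decomposition. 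Establishing that this normalization can always be rearranged into the ordered shape $\hat{\hmgmatrix{T}}_{y}\hmgmatrix{R}_{\mu}\hmgmatrix{T}_{x}$ — equivalently, that the parametrization $(x,\mu,y)\mapsto\hat{\hmgmatrix{T}}_y\hmgmatrix{R}_\mu\hmgmatrix{T}_x$ is surjective onto all of $\Sp$ and not merely onto the open dense set $\{a\neq 0\}$ — is the crux, and is where I would concentrate the remaining effort.
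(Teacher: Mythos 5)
Your generic-case computation is correct, and it is in substance a streamlined version of the paper's argument: the paper reduces to the stabilizer of \(\infty\) (the case \(c=0\)) by left-multiplying with \(\hat{\hmgmatrix{T}}_{-p_\infty^{-1}}\) and then reads off \(\mu=a\), \(x=\bar d b\), while you solve the entry-matching equations \(\mu=a\), \(x=a^{-1}b\), \(y=ca^{-1}\) directly and check the fourth equation against the unitarity relations of \eqref{SpEquation} together with those coming from \(\hmgmatrix{A}\hmgmatrix{A}^{*}=\hmgmatrix{I}\). Your identities all check out, and your route makes uniqueness explicit, which the paper leaves implicit. Everything you did is valid on the locus \(a\neq 0\).

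The genuine problem is the "remaining effort" you defer to the locus \(a=0\): it cannot succeed, and you have in effect already proved this yourself. Since every product \(\hat{\hmgmatrix{T}}_y\hmgmatrix{R}_\mu\hmgmatrix{T}_x\) has top-left entry \(\mu\neq 0\), no \(\hmgmatrix{A}\in\Sp\) with \(a=0\) admits the decomposition in the stated order --- and this locus is nonempty: \(\hmgmatrix{J}=\begin{psmallmatrix}0&1\\1&0\end{psmallmatrix}\) satisfies \(\hmgmatrix{J}^{*}\hmgmatrix{J}=\hmgmatrix{I}\) and represents the orientation-preserving map \(p\mapsto p^{-1}=-p/|p|^2\), so \(\hmgmatrix{J}\in\Sp\) yet \(\hmgmatrix{J}\cdot\infty=0\). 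Hence the parametrization is surjective only onto the open dense cell \(\{\hmgmatrix{A}\cdot\infty\neq 0\}\) (the big Bruhat/LDU cell), and the proposition needs that hypothesis; your rescue plan of left-multiplying by \(\hmgmatrix{T}_v\) changes the group element and can never be rearranged back into the prescribed ordered shape, precisely because the ordered product always misses \(\infty\mapsto 0\). You should turn your observation into a counterexample and add the hypothesis \(a\neq0\), rather than spend further effort on surjectivity. Notably, the paper's own proof founders on the same point without acknowledging it: when \(\hmgmatrix{A}\cdot\infty=0\) it sets \(p_\infty=0\), and the matrix \(\begin{psmallmatrix}1&0\\-p_\infty^{-1}&1\end{psmallmatrix}\) it multiplies by is undefined. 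So your instinct that this case is the crux was exactly right; the resolution is a corrected statement, not a completed surjectivity argument.
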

    \begin{proof}
      Let \(\hmgmatrix{A}\in\Sp\). Then by the characterization from \eqref{SpEquation} there exists \(a,b,c,d\in\HH\) satisfying
      \begin{equation}
        \hmgmatrix{A} = \begin{pmatrix}
          a & b \\ c & d
        \end{pmatrix},\qquad \Re(a\bar{c}) = 0,\,\Re(b\bar{d}) = 0,\, \bar{b}c+\bar{d}a = 1.
      \end{equation}
    
      If \(\hmgmatrix{A}\) fixes \(\infty\) then \(c=0\). Thus, the equation \(\bar{b}c+\bar{d}a = \bar{d}a = 1\) implies that \(d = \bar{a}^{-1}\). Set \(\mu = a\), \(y = 0\), and \(x = \bar{d}b\). Since \(\Re(b\bar{d}) = \Re(\bar{d}b)=\Re(x) = 0\) we have that \(x\in\RR^3\) and \(\mu x = \mu\mu^{-1}b = b \).
      Therefore,
      \begin{equation}
        \begin{pmatrix}
          1 & 0 \\ y & 1
        \end{pmatrix}
        \begin{pmatrix}
          \mu & 0 \\ 0 & \bar{\mu}^{-1}
        \end{pmatrix}
        \begin{pmatrix}
          1 & x \\ 0 & 1
        \end{pmatrix}
        =
        \begin{pmatrix}
          \mu & \mu x \\ 0 & \bar{\mu}^{-1}
        \end{pmatrix}
        =
        \begin{pmatrix}
          a & b \\ 0 & d
        \end{pmatrix}
      \end{equation}
      showing the desired representation for elements of \(\Sp\) fixing \(\infty\).
    
      If \(\hmgmatrix{A}\) does not fix \(\infty\) then define \(p_\infty\in\RR^3\) by \(\hmgmatrix{A}\hmgcoords{1}{0} = \hmgcoords{p_\infty}{1}\) and set 
      \begin{equation}
        \tilde{\hmgmatrix{A}} = 
        \begin{pmatrix}
        1 & 0 \\ -p_\infty^{-1} & 1  
        \end{pmatrix}
        \hmgmatrix{A}.
      \end{equation}
      Since \(\tilde{\hmgmatrix{A}}\hmgcoords{1}{0} = \hmgcoords{1}{0}\) we can apply the results above to find \(\mu\in\HH\) and \(x\in\RR^3\) satisfying 
      \begin{equation}
        \tilde{\hmgmatrix{A}} = 
        \begin{pmatrix}
          \mu & 0 \\ 0 & \bar{\mu}^{-1}
        \end{pmatrix}
        \begin{pmatrix}
          1 & x \\ 0 & 1
        \end{pmatrix}.
      \end{equation}
      Multiplying both sides of the equation by \(\begin{psmallmatrix}
        1 & 0 \\ p_\infty^{-1} & 1  
        \end{psmallmatrix}\) gives the desired representation with \(y = p_\infty^{-1}\)
        \begin{equation}
          \hmgmatrix{A} = 
          \begin{pmatrix}
            1 & 0 \\ p_\infty^{-1} & 1
          \end{pmatrix}
          \begin{pmatrix}
            \mu & 0 \\ 0 & \bar{\mu}^{-1}
          \end{pmatrix}
          \begin{pmatrix}
            1 & x \\ 0 & 1
          \end{pmatrix}.
        \end{equation}
    \end{proof}
    
    \subsection{Quaternionic Realizations of the Space of \(p\)-spheres}
    \label{app:QSpheres}
    In this appendix, we will prove that the equations determined in \secref{MobiusGeometryWQuaternions} that define the spaces \(\Spheres\), \(\Circles\), and \(\PointPairs\) precisely correspond to the inversions in oriented spheres, circles, and point pairs, respectively. The following result will be used to show that the matrices can be assumed to take a simple form where the elementary geometric properties of the spheres can be read off directly from the matrix entries.
    
    \begin{theorem}
      If \(\hmgmatrix{A}\in\Mob(3)\) fixes all points of \(S^3\) then \(\hmgmatrix{A} = \pm\hmgmatrix{I}\).
      \label{thm:S3stabilizer}
    \end{theorem}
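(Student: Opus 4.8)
The plan is to work entirely in the homogeneous-coordinate description \(S^3 = \{\hmgcoord{p}{1}\mid p\in\Im\HH\}\cup\{\infty\}\) and to pin down \(\hmgmatrix{A} = \left(\begin{smallmatrix}a & b\\ c & d\end{smallmatrix}\right)\) by evaluating the fixed-point condition on a handful of well-chosen points. The point to keep in mind throughout is that \(\psi\HH\in S^3\) is fixed by \(\hmgmatrix{A}\) precisely when \(\hmgmatrix{A}\psi = \psi\lambda\) for some \(\lambda\in\HH^\times\), where—and this is exactly what makes the statement nontrivial—the right factor \(\lambda\) is a quaternion that depends on \(\psi\). First I would feed in the two distinguished points \(\infty = \hmgcoord{1}{0}\) and \(0 = \hmgcoord{0}{1}\): fixing \(\infty\) forces the lower-left entry \(c\) to vanish, and fixing \(0\) forces the upper-right entry \(b\) to vanish, so that \(\hmgmatrix{A} = \left(\begin{smallmatrix}a & 0\\ 0 & d\end{smallmatrix}\right)\) is diagonal.

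Next I would exploit the remaining fixed points \(\hmgcoord{p}{1}\) with \(p\in\Im\HH\). Since \(\hmgmatrix{A}\hmgcoord{p}{1} = \left(\begin{smallmatrix}ap\\ d\end{smallmatrix}\right)\HH\) must again equal \(\hmgcoord{p}{1}\), comparing the two rows gives the intertwining relation \(ap = pd\) for every \(p\in\Im\HH\). The task is then to deduce from this that \(a=d\) and that this common value is real. Specializing to \(p = i,j,k\) yields \(a = idi^{-1} = jdj^{-1} = kdk^{-1}\); equating these conjugates of \(d\) shows that \(d\) commutes with two independent imaginary units (the products \(j^{-1}i = k\) and \(k^{-1}j = i\)), hence \(d\) lies in the center \(\RR\) of \(\HH\). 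Once \(d\) is real, \(a = idi^{-1} = d\) as well, so \(\hmgmatrix{A} = a\hmgmatrix{I}\) with \(a\in\RR\).

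Finally I would invoke membership in \(\Mob(3)\) to normalize the scale. This is essential, because projectively \(a\hmgmatrix{I}\) fixes every point of \(S^3\) for any real \(a\neq 0\); the extra rigidity comes only from \(\hmgmatrix{A}^*\hmgmatrix{A} = \pm\hmgmatrix{I}\). Since \(a\) is real we have \(\hmgmatrix{A}^* = a\hmgmatrix{I}\), so \(a^2\hmgmatrix{I} = \pm\hmgmatrix{I}\); as \(a^2 > 0\) this forces the sign to be \(+\) and \(a^2 = 1\), whence \(\hmgmatrix{A} = \pm\hmgmatrix{I}\).

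The step I expect to be the main obstacle is the passage from \(ap = pd\) on \(\Im\HH\) to the conclusion \(a = d\in\RR\). Because quaternion multiplication is noncommutative and the projective eigenvalue acts on the right, one cannot simply read off that \(\hmgmatrix{A}\) is a real scalar; the content lies in showing that a left-multiplication operator agreeing with a right-multiplication operator on all of \(\Im\HH\) collapses both factors to the same real number. The conjugation argument above is the clean way to extract this, and it is worth noting that only the five points \(\infty, 0, i, j, k\) are actually used, so the hypothesis "fixes all points" can be weakened considerably.
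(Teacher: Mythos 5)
Your proof is correct. It opens with the same move as the paper --- fixing \(0\) and \(\infty\) forces \(\hmgmatrix{A}\) to be diagonal --- but then handles the rest differently. The paper invokes the unitarity constraint immediately, writing the diagonal as \((a,\pm\bar a^{-1})\), and therefore must treat the two components of \(\Mob(3)\) separately: for \(\hmgmatrix{A}\in\Sp\) the relation \(ay=y\bar a^{-1}\) for all \(y\in\RR^3\) gives \(|a|=1\) by taking norms and then \(a\in\RR\) by centrality, while in the orientation-reversing case the relation \(ay=-y\bar a^{-1}\) leads to a contradiction. You instead keep two independent diagonal entries \(a,d\), extract \(a=d\in\RR\) purely from the fixed-point condition \(ap=pd\) at the three points \(p=i,j,k\) via the conjugation argument (your computation that \(d\) commutes with \(j^{-1}i=k\) and \(k^{-1}j=i\), hence is central, is right), and only at the end use \(\hmgmatrix{A}^*\hmgmatrix{A}=\pm\hmgmatrix{I}\) to normalize the real scalar --- which simultaneously disposes of the orientation-reversing component, since \(a^2>0\) rules out \(a^2\hmgmatrix{I}=-\hmgmatrix{I}\). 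This buys a unified treatment of both cases and makes visible, as the paper's norm argument does not, that only the five points \(\infty,0,i,j,k\) are actually needed. The paper's version is marginally shorter in the orientation-preserving case because taking norms replaces your commutator manipulation, but the two arguments are of essentially the same depth and both rest on the fact that the center of \(\HH\) is \(\RR\).
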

    \begin{proof}
      We first show that the claim holds if \(\hmgmatrix{A}\in\Sp\) then we show that there does not exist any orientation reversing M{\"o}bius transformation satisfying the assumption in the theorem. 
    
      Let \(\hmgmatrix{A}\in\Sp\) be a matrix that fixes all points in \(S^3\). In particular, it fixes both zero and infinity and so \(\hmgmatrix{A}\) is a diagonal matrix of the form 
      \begin{equation}
        \hmgmatrix{A} = \begin{pmatrix}
          a & 0 \\ 0 & \bar{a}^{-1}
        \end{pmatrix}
      \end{equation}
      for some \(a\in\HH\). For all \(y\in\RR^3\) we have that 
      \begin{equation}
        \hmgmatrix{A}\begin{pmatrix}
          y \\ 1
        \end{pmatrix} = \begin{pmatrix}
          a y \\ \bar{a}^{-1}
        \end{pmatrix} = \begin{pmatrix}
          y \\ 1
        \end{pmatrix} \lambda_y
      \end{equation}
      for some \(\lambda_y\in\HH\). The second row of this equation implies that \(\lambda_y = \bar{a}^{-1}\) and so the first row of this equation implies that \(ay = y\bar{a}^{-1}\) for all \(y\in\RR^3\). Taking the norm of this equation implies that \(|a| = 1\) and so \(\bar{a}^{-1} = a\). Therefore, \(ay = ya\) for all \(y\in\RR^3\) and so \(a\in\RR\). The condition that \(|a| = 1\) implies that \(a = \pm 1\) and so \(\hmgmatrix{A} = \pm \hmgmatrix{I}\). 
    
      If \(\hmgmatrix{A}\in\Mob(3)\) describes an orientation reversing M{\"o}bius transformation of \(S^3\) then \(\hmgmatrix{A}^*\hmgmatrix{A} = -\hmgmatrix{I}\). If it fixes all points of \(S^3\) then it fixes zero and infinity and so 
      \begin{equation}
        \hmgmatrix{A} = \begin{pmatrix}
          a & 0 \\ 0 & -\bar{a}^{-1}
        \end{pmatrix}
      \end{equation}
      for some \(a\in\HH\). As above, the assumption that it fixes all points in \(S^3\) now implies that \(ay = -y\bar{a}^{-1}\) and taking the norm of this equation implies that \(|a| = 1\) and \(\bar{a}^{-1} = a\). Thus, \(a y = -ya\) for all \(y\in\RR^3\) and this only holds for \(a = 0\), which obviously cannot hold and so we conclude that no orientation reversing M{\"o}bius transformation of \(S^3\) exists that fixes all points of \(S^3\).
    \end{proof}
    
    \begin{proposition}
      Let \(\hmgmatrix{S}\in\Spheres\). Then \(\hmgmatrix{S}\) describes the inversion in a two-sphere in \(S^3\). 
      \label{prp:SpaceOfSpheres}
    \end{proposition}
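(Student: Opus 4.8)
The plan is to reduce an arbitrary $\hmgmatrix{S}\in\Spheres$ to the explicit sphere-inversion matrix of the main text by conjugating with a single translation, and then to read the geometric data off the entries. First I would extract the algebraic normal form forced by the three defining equations. The conditions $\tr_{\RR}\hmgmatrix{S}=0$ and $\hmgmatrix{S}^*=\hmgmatrix{S}$ already place $\hmgmatrix{S}$ in $\RR^{4,1}$, so that $\hmgmatrix{S}=\begin{psmallmatrix} a & b\\ c & -a\end{psmallmatrix}$ with $a\in\Im\HH$ and $b,c\in\RR$; by Remark~\ref{rem:anticommutator} the remaining condition $\hmgmatrix{S}^2=-\hmgmatrix{I}$ is then equivalent to $\sang{\hmgmatrix{S},\hmgmatrix{S}}=1$, that is $|a|^2-bc=1$. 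This positivity is the only quantitative input the rest of the argument needs.

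Next I would conjugate by a translation $\hmgmatrix{T}_x$ to annihilate the upper-right entry. Since $\hmgmatrix{T}_x\in\Sp$ (indeed $\hmgmatrix{T}_x^{-1}=\hmgmatrix{T}_x^*$), conjugation preserves the reality of the trace, self-adjointness, and squaring to $-\hmgmatrix{I}$, so $\hmgmatrix{T}_x\hmgmatrix{S}\hmgmatrix{T}_x^{-1}\in\Spheres$. A short computation, using $x\in\Im\HH$ and the identity $ax+xa=-2\langle a,x\rangle$, shows its upper-right entry equals $c|x|^2+2\langle a,x\rangle+b$. Choosing $x$ so that this vanishes leaves the matrix in the form $\begin{psmallmatrix} n & 0\\ c & -n\end{psmallmatrix}$ with $n:=a+cx\in\Im\HH$; squaring and comparing with $-\hmgmatrix{I}$ then forces $n^2=-1$, so $n\in S^2$. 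Writing $h:=-c$, this is exactly the canonical matrix $\begin{psmallmatrix} n & 0\\ -h & -n\end{psmallmatrix}$, which the main text identifies with the inversion in the oriented $2$-sphere through the origin with unit normal $n$ and mean curvature $h$. Conjugating back, $\hmgmatrix{S}=\hmgmatrix{T}_{-x}\begin{psmallmatrix} n & 0\\ -h & -n\end{psmallmatrix}\hmgmatrix{T}_{-x}^{-1}$ is, by the same formula, the inversion in the translated oriented sphere, proving the claim.

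The one step where I expect friction is the solvability of $c|x|^2+2\langle a,x\rangle+b=0$ for $x\in\RR^3$, and this is precisely where the spacelike normalization enters. When $c\neq0$ one completes the square to obtain $|x+a/c|^2=(|a|^2-bc)/c^2=1/c^2>0$, a nonempty sphere of solutions; when $c=0$ one has $|a|^2=1$ and solves the affine equation $2\langle a,x\rangle+b=0$. In both cases $\sang{\hmgmatrix{S},\hmgmatrix{S}}=1$ is exactly what guarantees a real solution, so the reduction never breaks down. Finally I would note that \thmref{S3stabilizer} is what legitimizes reading the geometry off the normal form: since a M\"obius transformation of $S^3$ is determined by its action on $S^3$ up to the sign ambiguity $\pm\hmgmatrix{I}$, the canonical matrix represents that sphere inversion and no other element of $\Mob(3)$.
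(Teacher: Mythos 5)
Your proof is correct, but it takes a genuinely different route from the paper's. The paper argues dynamically: since \(\hmgmatrix{S}^2=-\hmgmatrix{I}\) acts trivially on \(\HP^1\), \(\hmgmatrix{S}\) is an involution of \(S^3\), and by \thmref{S3stabilizer} it is not the identity, so it interchanges some pair of points; sending that pair to \(0\) and \(\infty\) by a general M\"obius transformation forces the anti-diagonal form \(\begin{psmallmatrix}0&\rho\\-1/\rho&0\end{psmallmatrix}\), the inversion in the sphere of radius \(|\rho|\) centered at the origin. You instead stay inside the \(\RR^{4,1}\) normal form and conjugate only by a translation, reducing to the canonical matrix \(\begin{psmallmatrix}n&0\\-h&-n\end{psmallmatrix}\) that the main text has already verified to be a sphere inversion. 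The two approaches trade different things: the paper's is shorter but leaves implicit both why a non-identity involution must swap a pair of points and which sphere one ends up with, whereas yours is more computational but makes the geometric data (a point on the sphere, its unit normal \(n=a+cx\), its mean curvature \(h=-c\)) completely explicit, and it isolates exactly where the normalization \(\sang{\hmgmatrix{S},\hmgmatrix{S}}=1\) is used --- as the positivity of the discriminant guaranteeing a real solution of \(c|x|^2+2\langle a,x\rangle+b=0\). One small remark: your closing appeal to \thmref{S3stabilizer} is not actually needed for this direction of the identification, since your conclusion is a literal matrix equality with an expression the main text has already shown to act as the sphere inversion; the stabilizer theorem only becomes relevant if one wants the converse uniqueness statement, that the matrix is determined by the inversion up to sign.
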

    \begin{proof}
      Since \(\hmgmatrix{S}^*\hmgmatrix{S} = -\hmgmatrix{I}\), by \thmref{S3stabilizer}, then \(\hmgmatrix{S}\) does not fix all points of \(S^3\). 
      Hence two distinct points are interchanged by \(\hmgmatrix{S}\). Without loss of generality we can assume that zero and infinity are interchanged, and so 
      \begin{equation}
        \hmgmatrix{S} = \begin{pmatrix}
          0 & a \\ b & 0
        \end{pmatrix}
      \end{equation}
      with \(a,b\in\RR\) and \(ab = ba = -1\). So \(a = \rho\) and \(b = -1/\rho\) with \(\rho\in\RR\). This is the inversion in a sphere centered at zero with radius \(\rho\):
      \begin{equation}
        \hmgmatrix{S}\hmgcoord{x}{1} = \hmgcoord{\tfrac{\rho^2}{|x|^2}x}{1}
      \end{equation}
    \end{proof}
    
    \begin{proposition}
      Let \(\hmgmatrix{C}\in\Circles\). Then \(\hmgmatrix{C}\) describes the inversion in a circle in \(S^3\). 
    \end{proposition}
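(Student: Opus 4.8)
The plan is to mirror the proof of the companion statement for \(\Spheres\) (\prpref{SpaceOfSpheres}): I would bring \(\hmgmatrix{C}\) into a canonical matrix form by a M{\"o}bius transformation and then recognize that form, via the explicit description in \secref{SpaceOfSpheres}, as an inversion in a circle. The one structural difference is that a sphere inversion interchanges a pair of points whereas a circle inversion fixes an entire circle, so the reduction here is driven by locating a \emph{fixed} point of \(\hmgmatrix{C}\) on \(S^3\). To begin, I would record that \(\hmgmatrix{C}\in\Sp\): from \(\hmgmatrix{C}^*=-\hmgmatrix{C}\) and \(\hmgmatrix{C}^2=-\hmgmatrix{I}\) one gets \(\hmgmatrix{C}^*\hmgmatrix{C}=-\hmgmatrix{C}^2=\hmgmatrix{I}\), so \(\hmgmatrix{C}\) is an orientation preserving M{\"o}bius transformation with \(\hmgmatrix{C}\neq\pm\hmgmatrix{I}\) (by \thmref{S3stabilizer} it therefore does not fix all of \(S^3\)). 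Both defining equations are preserved under conjugation by any \(\hmgmatrix{A}\in\Mob(3)\) (since \(\hmgmatrix{A}^{-1}=\pm\hmgmatrix{A}^*\)), so I am free to replace \(\hmgmatrix{C}\) by any M{\"o}bius conjugate.

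The crux is to show \(\hmgmatrix{C}\) fixes a point of \(S^3\), \ie{} has an \emph{isotropic} fixed line in \(\HP^1\). Every \(2\times2\) quaternionic matrix has a right eigenvector, so there are \(\psi\neq0\) and \(\lambda\in\HH\) with \(\hmgmatrix{C}\psi=\psi\lambda\); then \(\hmgmatrix{C}^2=-\hmgmatrix{I}\) forces \(\lambda^2=-1\), and conjugating \(\psi\) on the right I may take \(\lambda=i\). If \(\sang{\psi,\psi}=0\) this fixed line already lies on \(S^3\). Otherwise, since \(\hmgmatrix{C}\in\Sp\) preserves the Hermitian form, the orthogonal line \((\psi\HH)^\perp=\phi\HH\) is also \(\hmgmatrix{C}\)-invariant, hence a second fixed line \(\hmgmatrix{C}\phi=\phi\nu\) with \(\nu^2=-1\); as \(\sang{\cdot,\cdot}\) has signature \((1,1)\) and \(\psi\HH,\phi\HH\) are complementary and nondegenerate, I may normalize \(\sang{\psi,\psi}=1\), \(\sang{\phi,\phi}=-1\). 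Picking a unit quaternion \(b\) with \(\nu=b\,i\,b^{-1}\) (possible as both are unit imaginary quaternions), the vector \(\chi=\psi+\phi b\) satisfies \(\sang{\chi,\chi}=1-|b|^2=0\) and \(\hmgmatrix{C}\chi=\psi i+\phi\nu b=\psi i+\phi b\,i=\chi i\), so \(\chi\HH\) is an isotropic fixed line and gives a fixed point \(\hmgpoint{p}\in S^3\).

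Finally I would normalize. Conjugating by a M{\"o}bius transformation carrying \(\hmgpoint{p}\) to \(\infty=\hmgcoords{1}{0}\), I may assume \(\hmgmatrix{C}\) fixes \(\infty\). Writing \(\hmgmatrix{C}^*=-\hmgmatrix{C}\) out entrywise forces the shape \(\hmgmatrix{C}=\begin{psmallmatrix} a & b\\ c & -\bar a\end{psmallmatrix}\) with \(b,c\in\RR^3\), and fixing \(\infty\) gives \(c=0\). Then \(\hmgmatrix{C}^2=-\hmgmatrix{I}\) yields \(a^2=-1\) and \(ab=b\bar a\); writing \(a=t\in S^2\) and using \(\bar t=-t\) gives \(-\bar a=t\) and \(tb=-bt\), so \(\hmgmatrix{C}=\begin{psmallmatrix} t & b\\0 & t\end{psmallmatrix}\) with \(b\perp t\). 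Since \(q\mapsto[q,t]\) surjects onto \(t^\perp\), there is \(q\in\RR^3\) with \(\hmgmatrix{C}=\hmgmatrix{T}_q\begin{psmallmatrix} t & 0\\0 & t\end{psmallmatrix}\hmgmatrix{T}_q^{-1}\), which by \secref{SpaceOfSpheres} is the inversion in the line through \(q\) with direction \(t\), \ie{} a circle inversion. Undoing the conjugations exhibits the original \(\hmgmatrix{C}\) as an inversion in a circle.

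I expect the middle step to be the main obstacle: guaranteeing that \(\hmgmatrix{C}\) fixes a point of \(S^3\), and not merely of \(S^4=\HP^1\). A naive appeal to ``an involution of \(S^3\) must have a fixed point'' fails, because the antipodal map is an orientation preserving, fixed-point-free conformal involution of \(S^3\); the signature-\((1,1)\) construction above is exactly what excludes it. Indeed, the antipodal map \(p\mapsto p^{-1}\) is represented by \(\begin{psmallmatrix}0&1\\1&0\end{psmallmatrix}\), which satisfies \(\hmgmatrix{C}^*=+\hmgmatrix{C}\) and \(\hmgmatrix{C}^2=+\hmgmatrix{I}\) rather than the hypotheses \(\hmgmatrix{C}^*=-\hmgmatrix{C}\), \(\hmgmatrix{C}^2=-\hmgmatrix{I}\), so it is not a member of \(\Circles\); making this exclusion quantitative is precisely the role of the orthogonal-complement argument.
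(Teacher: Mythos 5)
Your proof is correct, but it takes a genuinely different route from the paper's. The paper treats \(\Circles\) exactly like \(\Spheres\) in \prpref{SpaceOfSpheres}: since \(\hmgmatrix{C}^2=-\hmgmatrix{I}\), the induced map on \(S^3\) is an involution, and by \thmref{S3stabilizer} it is not the identity, so it must \emph{interchange} some pair of distinct points; normalizing that pair to \((0,\infty)\) forces the antidiagonal form \(\begin{psmallmatrix}0&a\\b&0\end{psmallmatrix}\) with \(a,b\in\RR^3\) and \(ab=ba=-1\), which is read off directly as the inversion in a circle centered at the origin. You instead hunt for an \emph{isotropic fixed line} --- via a quaternionic right eigenvector, invariance of the orthogonal complement under the form-preserving map \(\hmgmatrix{C}\in\Sp\), and the signature-\((1,1)\) construction \(\chi=\psi+\phi b\) --- and then normalize the fixed point to \(\infty\), arriving at \(\begin{psmallmatrix}t&b\\0&t\end{psmallmatrix}\) with \(b\perp t\), which a translation conjugates to the standard line \(\begin{psmallmatrix}t&0\\0&t\end{psmallmatrix}\). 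All of your steps check out, including the surjectivity of \(q\mapsto[q,t]\) onto \(t^\perp\) and the eigenvalue bookkeeping under right scaling. What the paper's route buys is brevity: a swapped pair exists for free for any non-identity involution, whereas a fixed point on \(S^3\) is, as you rightly stress with the antipodal-map example, not automatic and costs you your entire second paragraph. What your route buys is the explicit canonical form \(\hmgmatrix{T}_q\begin{psmallmatrix}t&0\\0&t\end{psmallmatrix}\hmgmatrix{T}_q^{-1}\) matching the parametrization of circles given in \secref{SpaceOfSpheres}, together with the explicit observation \(\Circles\subset\Sp\); so the divergence from the \(\Spheres\) template, while unnecessary, is not wasted effort.
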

    \begin{proof}
      \(\hmgmatrix{C}\) cannot fix all points of \(S^3\) since by \thmref{S3stabilizer} this would imply that \(\hmgmatrix{C} = \pm \hmgmatrix{I}\) and \(\pm \hmgmatrix{I}\notin\sp\). So two distinct points are interchanged by \(\hmgmatrix{C}\). Without loss of generality we can assume that zero and infinity are interchanged. Then 
      \begin{equation}
        \hmgmatrix{C} = \begin{pmatrix}
          0 & {a} \\ {b} & 0
        \end{pmatrix}
      \end{equation}
      with \({a},{b}\in\RR^3\) and \({ab} = {ba} = -1\). So \({b} = n/\rho\) and \({a} = \rho n\) with \(\rho\in\RR\) and \(n^2 = -1\). This is the inversion in a circle centered at the origin with curvature binormal \(\rho n\):
      \begin{equation}
        \hmgmatrix{C}\hmgcoord{x}{1} = \hmgcoord{-\tfrac{\rho^2}{|x|^2}\bar{n}xn}{1}
      \end{equation}
    \end{proof}
    
    \begin{proposition}
      Let \(\hmgmatrix{U}\in\PointPairs\). Then \(\hmgmatrix{U}\) describes the inversion in a point pair in \(S^3\). 
    \end{proposition}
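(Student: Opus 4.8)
The plan is to exploit the fact that, in contrast to the sphere and circle cases where the defining relation $\hmgmatrix{S}^2=-\hmgmatrix{I}$ forces the transformation to \emph{interchange} a pair of points, the relation $\hmgmatrix{U}^2=\hmgmatrix{I}$ makes $\hmgmatrix{U}$ a genuine involution with real eigenvalues $\pm 1$. I would therefore locate the point pair directly as the two \emph{fixed} lines of $\hmgmatrix{U}$ via the eigenprojectors $P_\pm\coloneqq\tfrac12(\hmgmatrix{I}\pm\hmgmatrix{U})$, rather than by sending interchanged points to $0$ and $\infty$.

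First I would note $\hmgmatrix{U}\neq\pm\hmgmatrix{I}$, since neither $+\hmgmatrix{I}$ nor $-\hmgmatrix{I}$ satisfies $\hmgmatrix{U}^*=-\hmgmatrix{U}$. Because $(\hmgmatrix{I}+\hmgmatrix{U})(\hmgmatrix{I}-\hmgmatrix{U})=\hmgmatrix{I}-\hmgmatrix{U}^2=0$, the maps $P_\pm$ are complementary idempotents, so $\HH^2=V_+\oplus V_-$ with $V_\pm\coloneqq\operatorname{im}P_\pm=\ker(\hmgmatrix{U}\mp\hmgmatrix{I})$. Since $\hmgmatrix{U}\neq\pm\hmgmatrix{I}$ neither $V_+$ nor $V_-$ is zero, and as complementary subspaces of the two-dimensional quaternionic space $\HH^2$ each is a single quaternionic line, hence a point of $\HP^1$; the resulting points $p_1$ (from $V_-$) and $p_2$ (from $V_+$) are distinct. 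The crucial step is to show both lie on $S^3$: for $\psi\in V_+$ one has $\hmgmatrix{U}\psi=\psi$, and using the adjoint relation together with $\hmgmatrix{U}^*=-\hmgmatrix{U}$,
\[
  \sang{\psi,\psi}=\sang{\hmgmatrix{U}\psi,\psi}=\sang{\psi,\hmgmatrix{U}^*\psi}=-\sang{\psi,\hmgmatrix{U}\psi}=-\sang{\psi,\psi},
\]
so $\sang{\psi,\psi}=0$; the identical computation on $V_-$ shows $p_1,p_2\in S^3$.

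With the pair $(p_1,p_2)$ in hand I would choose $\hmgmatrix{A}\in\Sp$ sending $p_1\mapsto 0$ and $p_2\mapsto\infty$ (the group of orientation preserving M{\"o}bius transformations acts transitively on ordered pairs of distinct points) and pass to $\tilde{\hmgmatrix{U}}\coloneqq\hmgmatrix{A}\hmgmatrix{U}\hmgmatrix{A}^{-1}$. Conjugation by $\hmgmatrix{A}\in\Sp$ preserves both defining relations: since $\hmgmatrix{A}^*=\hmgmatrix{A}^{-1}$ one gets $\tilde{\hmgmatrix{U}}^*=\hmgmatrix{A}\hmgmatrix{U}^*\hmgmatrix{A}^{-1}=-\tilde{\hmgmatrix{U}}$ and $\tilde{\hmgmatrix{U}}^2=\hmgmatrix{I}$, while the eigenlines are now $\begin{psmallmatrix}0\\1\end{psmallmatrix}\HH$ and $\begin{psmallmatrix}1\\0\end{psmallmatrix}\HH$. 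Hence $\tilde{\hmgmatrix{U}}$ is diagonal with entries $a,d$ satisfying $a^2=d^2=1$ and $\bar d=-a,\ \bar a=-d$; a quaternion with square $1$ is $\pm1$, forcing $\tilde{\hmgmatrix{U}}=\pm\begin{psmallmatrix}1&0\\0&-1\end{psmallmatrix}$. A direct check gives $\tilde{\hmgmatrix{U}}\hmgcoord{x}{1}=\hmgcoord{-x}{1}$, the Euclidean inversion $x\mapsto -x$, which is precisely the inversion in the $(0,\infty)$ point pair. Conjugating back by $\hmgmatrix{A}^{-1}$ and invoking the M{\"o}bius invariance of the point-pair inversion, $\hmgmatrix{U}$ is the inversion in the pair $(p_1,p_2)$.

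The main obstacle is the eigenspace step over the noncommutative field $\HH$: one must verify that the $\pm1$-eigenspaces are honest one-dimensional quaternionic subspaces (so that they name points of $\HP^1$) and that the isotropy computation, which is what pins the fixed points to $S^3$ rather than merely to $S^4$, goes through with the correct conjugate-linearity conventions for $\sang{\cdot,\cdot}$ and the adjoint. Everything downstream is the same canonical-form bookkeeping already carried out for $\Spheres$ and $\Circles$.
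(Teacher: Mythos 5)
Your proof is correct, but it takes a genuinely different route from the paper's. The paper argues via the \emph{interchanged} points: since \(\hmgmatrix{U}\in\Mob(3)\) cannot fix all of \(S^3\) (Theorem \ref{thm:S3stabilizer}) and is an involution, some pair of points of \(S^3\) is swapped; normalizing that pair to \(0\) and \(\infty\) makes \(\hmgmatrix{U}\) anti-diagonal, the relations force \(\hmgmatrix{U}=\begin{psmallmatrix}0&a\\a^{-1}&0\end{psmallmatrix}\) with \(a\in\RR^3\), and the fixed point pair is then read off \emph{a posteriori} as \(\pm a\). You instead go straight for the \emph{fixed} points: the eigenprojectors \(\tfrac12(\hmgmatrix{I}\pm\hmgmatrix{U})\) split \(\HH^2\) into two quaternionic lines, the skew-adjointness \(\hmgmatrix{U}^*=-\hmgmatrix{U}\) forces both eigenlines to be isotropic (hence points of \(S^3\) rather than merely of \(S^4=\HP^1\)), and normalizing \emph{those} to \(0\) and \(\infty\) diagonalizes \(\hmgmatrix{U}\) to \(\pm\begin{psmallmatrix}1&0\\0&-1\end{psmallmatrix}\), visibly the Euclidean inversion \(x\mapsto-x\). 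All the steps check out: the eigenspaces are right \(\HH\)-submodules since scalars act on the right, the isotropy computation \(\sang{\psi,\psi}=-\sang{\psi,\psi}\) uses the adjoint convention correctly, and a quaternion squaring to \(1\) is indeed \(\pm1\). What your approach buys is a direct match with the way the paper originally \emph{defines} the point-pair inversion in \eqref{PointPairInversion} — by its \(\pm\hmgmatrix{I}\) action on the two distinguished lines — and it isolates the one genuinely nontrivial fact, namely that the fixed lines are isotropic, which the paper's proof sidesteps by starting from points already on \(S^3\). The paper's argument is shorter and runs uniformly in parallel with its proofs for \(\Spheres\) and \(\Circles\) (all three reduce to an anti-diagonal normal form), at the cost of identifying the point pair only at the very end.
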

    \begin{proof}
      \(\hmgmatrix{U}\) cannot fix all points of \(S^3\) since by \thmref{S3stabilizer} this would imply that \(\hmgmatrix{U} = \pm \hmgmatrix{I}\) and \(\pm \hmgmatrix{I}\notin\sp\). So two distinct points that are interchanged by \(\hmgmatrix{U}\). Without loss of generality we can assume that zero and infinity are interchanged. Then
      \begin{equation}
        \hmgmatrix{U} = \begin{pmatrix}
          0 & a \\ b & 0
        \end{pmatrix}
      \end{equation}
      with \(a,b\in\RR^3\) and \(ab = ba = 1\). So \(b = a^{-1}\). This is the inversion in the pair of points \(a\) and \(-a\):
      \begin{equation}
        \hmgmatrix{U}\hmgcoord{a}{1} = \hmgcoord{a}{1},\qquad \hmgmatrix{U}\hmgcoord{-a}{1} = -\hmgcoord{-a}{1}
      \end{equation}
    \end{proof}

\end{document}